\documentclass[a4paper,12pt]{amsart}
\usepackage{fullpage}
\usepackage[english,francais]{babel}
\usepackage{epsf}
\usepackage[dvips]{epsfig}
\usepackage{amssymb}
\usepackage{amsmath}
\usepackage[all]{xy}
\usepackage{color}

\usepackage{ucs}
\usepackage[utf8x]{inputenc}\usepackage{wasysym}
\usepackage{aeguill}

\newtheorem{theorem}{Theorem}[section]
\newtheorem{prop-f}[theoreme]{Proposition}
\newtheorem{prop}[theorem]{Proposition}

\newtheorem{corollary}[theorem]{Corollary}

\newtheorem{lemma}[theorem]{Lemma}


\newcommand{\finpreuve}{\hspace{\stretch{1}}{$\square$}}

\newcommand{\R}{\mathbb{R}}

\newcommand{\Z}{\mathbb{Z}}
\newcommand{\N}{\mathbb{N}}

\def\esp{\medskip\noindent}
\def\prg#1{\esp{\bf #1. }}

\def\proof{\prg{Proof}}

\def\proofof#1{\prg{Proof of #1}}
\renewcommand{\epsilon}{\varepsilon}
\def\btab{\begin{eqnarray*}}
\def\etab{\end{eqnarray*}}
\def\beq{\begin{equation}}
\def\eeq{\end{equation}}

\newcounter{numeroexo}

\def\L{{\mathcal D}}

\begin{document}

\selectlanguage{english}

\title[Continuum percolation in high dimensions]{Continuum percolation in high dimensions}

{
\author{Jean-Baptiste Gou\'er\'e}
\author{R{\'e}gine Marchand}
\address{Laboratoire de Math{\'e}matiques, Applications et Physique
Math{\'e}matique d'Orl{\'e}ans UMR 6628\\ Universit{\'e} d'Orl{\'e}ans\\ B.P.
6759\\
 45067 Orl{\'e}ans Cedex 2 France}
\email{Jean-Baptiste.Gouere@univ-orleans.fr}

\address{1. Universit{\'e} de Lorraine\\
Institut Elie Cartan de Lorraine, UMR 7502 (math{\'e}matiques)\\
Vandoeuvre-l{\`e}s-Nancy, F-54506 France. 
2. CNRS \\
Institut Elie Cartan de Lorraine, UMR 7502 (math{\'e}matiques)\\
Vandoeuvre-l{\`e}s-Nancy, F-54506 France. }
\email{Regine.Marchand@univ-lorraine.fr}
}


\begin{abstract}
Consider a Boolean model $\Sigma$ in $\R^d$.
The centers are given by a homogeneous Poisson point process with intensity $\lambda$ and the radii of distinct balls are i.i.d.\ with common distribution $\nu$.
The critical covered volume is the proportion of space covered by $\Sigma$ when the intensity $\lambda$ is critical for percolation.
Previous numerical simulations and heuristic arguments suggest that the critical covered volume
may be minimal when $\nu$ is a Dirac measure. 

In this paper, we prove that it is not the case at least in high dimension.
To establish this result we study the asymptotic behaviour, as $d$ tends to infinity, of the critical covered volume.
It appears that, in contrast to what happens in the constant radii case studied by Penrose, geometrical dependencies do not always vanish in high dimension.
\end{abstract}

\maketitle

\section{Introduction and statement of the main results}

\subsection*{Introduction.}
Consider a homogeneous Poisson point process on $\R^d$.
At each point of this process, we center a ball with random radius, the radii of distinct balls being i.i.d.\
and independent of the point process.
The union $\Sigma$ of these random balls is called a Boolean model.
This Boolean model only depends on three parameters : the intensity $\lambda$ of the point process of centers, the common distribution $\nu$ of the radii of the balls and the dimension $d$.

Denote by $\lambda_d^c(\nu)$ the critical intensity for percolation in $\Sigma$.
We then consider $c^c_d(\nu)$, the volumic proportion of space which is covered by $\Sigma$ when $\lambda=\lambda_d^c(\nu)$.
This quantity is called the critical covered volume.
This quantity is scale invariant (see \eqref{e:scaling}). For example $c_d^c(\delta_1)=c_d^c(\delta_r)$ for any $r>0$, where $\delta_r$ denotes the Dirac measure on $r$.
Numerical simulations in low dimension and heuristic arguments in any dimension suggested that 
the critical covered volume may be minimal when $\nu$ is a Dirac measure, that is when all the balls have the same radius. 
We show that this is not true in high dimensions.
This result is proved through the study of the following kind of asymptotics.
Let $\mu$ be a probability measure on $]0,+\infty[$ such that $r^{-d}\mu(dr)$ is a finite measure for any $d \ge 1$.
For any $d$, we then consider the probability measure $\widetilde{\mu_d} = Z_d^{-1} r^{-d} \mu(dr)$ where $Z_d$ is a normalization constant.
The $r^{-d}$ normalization will be discussed and motivated below \eqref{e:mud}. 
We prove that, as soon as $\mu$ is non degenerate, one has :
\begin{equation}\label{e:ll}
c_d^c(\widetilde{\mu_d}) \ll c_d^c(\delta_1) \hbox{ as } d\to\infty.
\end{equation}
This proves that, when $d$ is large enough, the critical covered volume is not minimal in the constant radii case.

The constant radii case has been studied by Penrose \cite{Penrose-high-dimensions}.
He proved that the asymptotic behavior of $c_d^c(\delta_1)$ is given by the critical parameter of the associated Galton-Watson process. 
This is due to the fact that geometrical dependencies vanish in high dimension in that case. 
We first focus in this paper on the case where the radii take only two distinct values.
We prove that, in that case, the asymptotic behavior of $c_d^c(\widetilde{\mu_d})$ is given by a competition between genealogy effects 
(given by the associated two-type Galton-Watson process) and geometrical dependencies effects.
This yields \eqref{e:ll} in that case.
The general non degenerate distribution of radii case follows.

\subsection*{The Boolean model.}
Let us give here a different -- but equivalent -- construction of the Boolean model.
Let $\nu$ be a finite\footnote{There is no greater generality in considering finite measures instead of
probability measures ; this is simply more convenient.} measure on $(0,+\infty)$.
We assume that the mass of $\nu$ is positive. 
Let $d \ge 2$ be an integer, $\lambda > 0$ be a real number and  
$\xi$ be a Poisson point process on $\R^d \times (0,+\infty)$ 
whose intensity measure is the Lebesgue measure on $\R^d$ times $\lambda\nu$.
We define a random subset of $\R^d$ as follows:
$$
\Sigma(\lambda\nu) = \bigcup_{(c,r) \in \xi} B(c,r),
$$
where $B(c,r)$ is the open Euclidean ball centered at $c \in \R^d$ and with radius $r \in (0,+\infty)$.
The random subset $\Sigma(\lambda\nu)$ is a Boolean model driven by $\lambda\nu$.


We say that $\Sigma(\lambda\nu)$ percolates if 
the connected component of $\Sigma(\lambda\nu)$ that contains the origin is unbounded with positive probability.
This is equivalent to the almost-sure existence of an unbounded connected component of $\Sigma(\lambda\nu)$.
We refer to the book by Meester and Roy \cite{Meester-Roy-livre} for background on continuum percolation.
The critical intensity is defined by:
$$
\lambda^c_d(\nu) = \inf \{\lambda>0 : \Sigma(\lambda\nu) \hbox{ percolates}\}.
$$
One easily checks that $\lambda^c_d(\nu)$ is finite.
In \cite{G-perco-boolean-model} it is proven that $\lambda^c_d(\nu)$ is positive if and only~if 
\begin{equation}\label{e:volumefini}
\int r^d\nu(dr) < +\infty.
\end{equation}
We assume that this assumption is fulfilled.

By ergodicity, the Boolean model $\Sigma(\lambda\nu)$ has a deterministic natural density.
This is also the probability that a given point belongs to the Boolean model and  it is given by :
$$
P(0 \in \Sigma(\lambda\nu))=1-\exp\left(-\lambda \int v_dr^d \nu(dr)\right),
$$
where $v_d$ denotes the volume of the unit ball in $\R^d$.
The critical covered volume $c_d^c(\nu)$ is the density of the Boolean model when the intensity is critical :
$$
c^c_d(\nu) =1-\exp\left(-\lambda_d^c(\nu) \int v_dr^d \nu(dr) \right).
$$
It is thus more convenient to study the critical covered volume through the normalized critical intensity:
$$
\widetilde{\lambda}^c_d(\nu) = \lambda_d^c(\nu) \int v_d(2r)^d \nu(dr).
$$
We then have ${c^c_d}(\nu) =1-\exp\left(-\frac{\widetilde\lambda_d^c(\nu)}{2^d} \right)$.
The factor $2^d$ may seem arbitrary here.
Its interest will appear in the statement of the next theorems.

We will now give two scaling relations which partly justify our preference for $c^c_d$ or $\widetilde{\lambda}^c_d$ over $\lambda_d^c$.
For all $a>0$, define $H^a\nu$ as the image of $\nu$ under the map defined by $x \mapsto ax$.
By scaling, we get:
\begin{equation}\label{e:scaling}
\widetilde{\lambda}^c_d(H^a\nu)=\widetilde{\lambda}^c_d(\nu).
\end{equation}
This is a consequence of Proposition 2.11 in \cite{Meester-Roy-livre}, and it may become more obvious when considering
the two following facts : a critical Boolean model remains critical when rescaling and 
the density is invariant by rescaling ; therefore the critical covered volume and then the normalized threshold are invariant.
One also easily checks the following invariance:
\begin{equation}\label{e:scaling2}
\widetilde{\lambda}^c_d(a\nu)=\widetilde{\lambda}^c_d(\nu).
\end{equation}

\subsection*{Critical intensity as a function of $\nu$.}
It has been conjectured by Kertész and Vicsek~\cite{Kertesz-al} that the critical covered volume -- or equivalently the normalized critical intensity -- should be independent of $\nu$,
as soon as the support of $\nu$ is bounded.
Phani and Dhar \cite{Phani-Dhar} gave a heuristic argument suggesting that the conjecture were false.
A rigorous proof was then given by Meester, Roy and Sarkar in \cite{Meester-Roy-Sarkar}.
More precisely, they gave examples of measures $\nu$ with two atoms such that:
\begin{equation}\label{e:MRS}
\widetilde\lambda^c_d(\nu) > \widetilde\lambda_d^c(\delta_1).
\end{equation}
As a consequence of Theorem 1.1 in the paper by Menshikov, Popov and Vachkovskaia~\cite{Menshikov-al-multi},
we even get that $\widetilde\lambda^c_d(\nu)$ can be arbitrarily large.
More precisely, if
\begin{equation}\label{e:cvmulti}
\text{if }\nu(n,a)=\sum_{k=0}^{n-1} a^{dk} \delta_{a^{-k}}, \text{ then }\widetilde\lambda^c_d(\nu(n,a)) \to n\widetilde\lambda^c_d(\delta_1) \hbox{ as } a \to \infty.
\end{equation}
Actually the result of \cite{Menshikov-al-multi} is the following much stronger statement: 
$\lambda^c_d(\nu(+\infty,a)) \to \lambda_d^c(\delta_1)$ when $a \to \infty$.
The convergence \eqref{e:cvmulti} is implicit in the work of Meester, Roy and Sarkar in \cite{Meester-Roy-Sarkar}, 
at least when $n=2$.
There were also heuristics for such a result in \cite{Phani-Dhar}.

By Theorem 2.1 in \cite{G-perco-boolean-model}, we get the existence of a positive constant $C_d$, that depends only on the 
dimension $d$, such that:
$$
\widetilde\lambda^c_d(\nu) \ge C_d.
$$

To sum up, $\widetilde\lambda_d^c(\cdot)$ is not bounded from above but is bounded from below by a positive constant.
In other words, the critical covered volume $c^c_d(\cdot) \in (0,1)$ can be arbitrarily close to $1$ 
but is bounded from below by a positive constant. It is thus natural to seek the optimal distribution, that is the one which minimizes the critical covered volume.

In the physical literature, it is strongly believed that, at least when $d=2$ and $d=3$, the critical covered volume is minimum in the case of a deterministic radius, when the distribution of radius is a Dirac measure.
This conjecture is supported by numerical evidence (to the best of our knowledge, the most accurate estimations are given in
a paper by Quintanilla and Ziff \cite{QZ-PRE-2007} when $d=2$ and in
a paper by Consiglio, Baker, Paul and Stanley \cite{Consiglio-2003} when $d=3$). On Figure \ref{f:multiscale}, we plot the critical covered volume in dimension 2 as a function of $\alpha$ and for different values of $\rho$ when $\nu=(1-\alpha) \delta_1+{\alpha}{\rho^{-2}}\delta_\rho$. The data for finite values of $\rho$ come from numerical estimations in \cite{QZ-PRE-2007}, while the data for infinite $\rho$ come from the study of the multi-scale Boolean model. See Section 1.4 in \cite{G-multi} for further references. 
\begin{figure}[h!]
\includegraphics[scale=0.7]{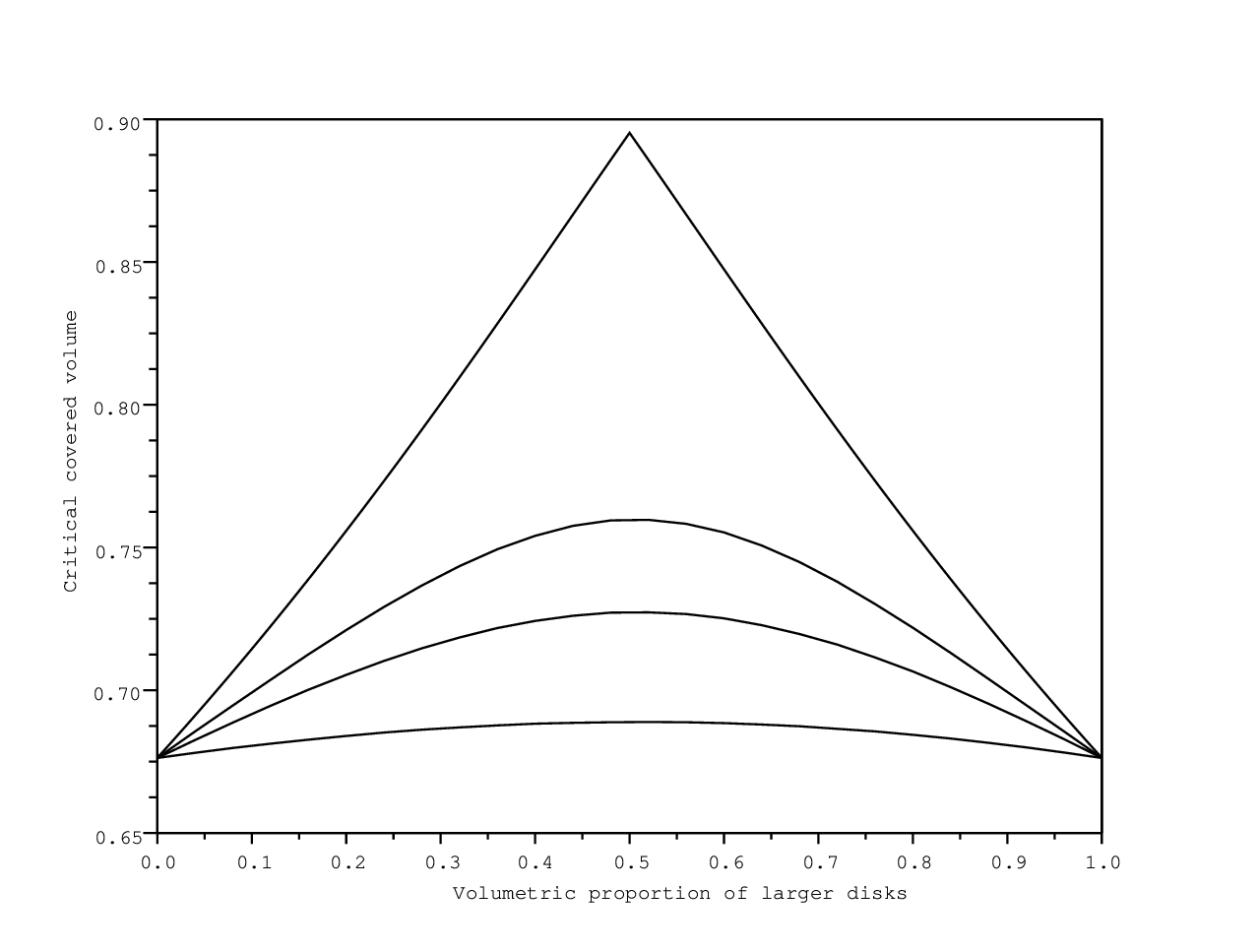}
\caption{Critical covered volume as a function of $\alpha$ for different values of $\rho$.
From bottom to top: $\rho=2, \rho=5, \rho=10$ and the limit as $\rho \to \infty$.}
\label{f:multiscale}
\end{figure}
The conjecture is also supported by some heuristic arguments in any dimension (see for example Dhar~\cite{Dhar-1997}).
See also \cite{Balram-Dhar}.
In the above cited paper \cite{Meester-Roy-Sarkar}, 
it is noted that the rigorous proof of \eqref{e:MRS} suggests that the deterministic case might be optimal for any $d \ge 2$.

In this paper we show on the contrary that for all $d$ large enough the critical covered volume is not minimized by the case of deterministic radii. 

\subsection*{Critical intensity  in high dimension : the case of a deterministic radius.}

Assume here that the measure $\nu$ is a Dirac mass at $1$, that is that the radii of the balls are all equal to $1$. 
Penrose proved the following result in \cite{Penrose-high-dimensions} :
\begin{theorem}[Penrose] \label{t:penrose}
$\displaystyle\lim_{d\to\infty} \widetilde\lambda^c_d(\delta_1) =1.$
\end{theorem}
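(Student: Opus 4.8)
The plan is to compare the Boolean model with unit balls to a branching process, following the intuition that in high dimensions geometrical overlaps between distinct balls become negligible. Write $\widetilde\lambda^c_d(\delta_1) = \lambda^c_d(\delta_1) v_d 2^d$, so the statement is equivalent to $\lambda^c_d(\delta_1) v_d 2^d \to 1$. The key geometric quantity is the following: if a ball $B(c,1)$ is in the model, the expected number of other balls $B(c',1)$ that intersect it is $\lambda v_d 2^d$, since two unit balls intersect iff their centers are within distance $2$. So $\widetilde\lambda^c_d(\delta_1)$ is exactly the expected offspring number of the naive exploration process, and we must show this expected offspring number tends to $1$ at criticality — i.e. the process becomes asymptotically critical in the Galton--Watson sense.

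For the lower bound $\liminf_d \widetilde\lambda^c_d(\delta_1) \ge 1$, I would dominate the cluster of the origin by a Galton--Watson tree: when exploring the connected component ball by ball, each newly added ball gives rise to at most a Poisson($\lambda v_d 2^d$) number of children (this is a genuine stochastic domination because we only add balls, never remove the constraint coming from already-placed ones — the dependence only \emph{decreases} the count). If $\lambda v_d 2^d < 1$ the dominating Galton--Watson process is subcritical, hence dies out a.s., so there is no percolation; therefore $\widetilde\lambda^c_d(\delta_1) \ge 1$ for every $d$, which is in fact a clean non-asymptotic bound.

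The hard part is the matching upper bound $\limsup_d \widetilde\lambda^c_d(\delta_1) \le 1$: one must show that if $\lambda v_d 2^d = 1+\epsilon$ with $\epsilon$ fixed, then for all $d$ large the model percolates. The strategy is to show the exploration process stochastically \emph{dominates} a supercritical branching process up to a large but fixed depth, with an error that vanishes as $d\to\infty$. The point is to control the probability that a ball at distance roughly $2$ from a given ball is "wasted" because its own neighbourhood has already been substantially covered by previously explored balls; one shows that two independent unit balls whose centers are uniform in the relevant annulus have intersection of typical volume that is exponentially small compared to $v_d$, so the covered region around a fresh ball has volume $o(v_d)$ with high probability. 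Concretely I would run a breadth-first exploration for $N$ generations, where at each step a ball spawns children according to a Poisson process of intensity $\lambda$ on the shell of centers within distance $2$, \emph{minus} those falling in the already-covered set; bounding the expected loss by $C_d \to 0$ and using a second-moment or Lyapunov argument on the total population, one gets that with probability bounded away from $0$ the $N$-th generation is nonempty, uniformly in $N$ once $d$ is large enough relative to $\epsilon$. Letting $N\to\infty$ gives percolation. A renormalization/block argument may be needed to upgrade "survival of the exploration to all finite depths with uniform positive probability" to an actual infinite cluster; alternatively one can invoke a comparison with a supercritical branching \emph{random walk} whose spatial spread guarantees the cluster is genuinely unbounded. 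Combining the two bounds yields $\widetilde\lambda^c_d(\delta_1)\to 1$.
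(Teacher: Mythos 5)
Your lower bound is correct and is exactly the argument the paper itself sketches right after the statement: explore the cluster of $B(0,1)$ ball by ball, dominate the number of children of each ball by a Poisson random variable of mean $\lambda v_d 2^d$, and conclude that subcriticality of the dominating Galton--Watson process forbids percolation, whence $\widetilde\lambda^c_d(\delta_1)\ge 1$ for every $d$. Keep in mind, though, that the paper does not prove the theorem: it is quoted from Penrose \cite{Penrose-high-dimensions}, and the text only indicates that the difficult direction consists in showing that the geometric ``interferences'' become negligible as $d\to\infty$.

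The genuine gap is that your upper bound is a strategy outline, not a proof, and it leaves unresolved precisely the two points where the real work lies. First, the quantitative core: it does not suffice that the overlap of two typical unit balls has volume exponentially small compared to $v_d$. The number of already-explored balls grows geometrically with the generation, so you must show that the \emph{total} explored volume seen by a fresh ball stays $o(v_d)$ uniformly over all generations; this requires controlling where the explored balls sit (the spatial spread of the exploration), not just pairwise overlaps, and your ``expected loss $C_d\to 0$ plus a second-moment or Lyapunov argument'' does not engage with this accumulation. Second, the upgrade from ``survival up to depth $N$ with probability bounded below'' to an actual unbounded component is only gestured at (``a renormalization/block argument may be needed\dots alternatively a branching random walk''). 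This is exactly where an explicit construction is required: both in Penrose's proof and in the analogous supercritical step of the present paper (Proposition \ref{p:onestep} and Section \ref{s:severalsteps}), the device is a comparison with supercritical two-dimensional oriented percolation, where finite paths are built inside disjoint regions $W^{\pm}_{a,n}$ whose disjointness yields the conditional independence needed for the domination, and hence an infinite cluster. Without such a construction, or an equally explicit coupling with a branching random walk together with a proof that collisions are negligible uniformly in time, the bound $\limsup_d \widetilde\lambda^c_d(\delta_1)\le 1$ is not established.
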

With the scale invariance  \eqref{e:scaling} of $\widetilde\lambda^c_d$, this limit can readily be generalized to any constant radius : for any $a>0$,
$$
\lim_{d\to\infty} \widetilde\lambda^c_d(\delta_a) = 
\lim_{d\to\infty} \widetilde\lambda^c_d(\delta_1) =1.
$$   
Theorem \ref{t:penrose} is the continuum analogue of a result of Kesten
\cite{Kesten-high-dimensions} 
for Bernoulli bond percolation on the nearest-neighbor integer lattice $\Z^d$, which says that the critical percolation parameter is asymptotically equivalent to $1/(2d)$.

Let us say a word about the ideas of the proof of Theorem \ref{t:penrose}.

The inequality $\widetilde\lambda^c_d(\delta_1) > 1$  
holds  for any $d \ge 2$.
The proof is simple, and here is the idea. We consider the following natural genealogy.
The deterministic ball $B(0,1)$ is said to be the ball of generation $0$.
The random balls of $\Sigma(\lambda\delta_1)$ that touch $B(0,1)$ are then the balls of generation $1$.
The random balls that touch one ball of generation $1$ without being one of them are then  the balls of generation $2$ and so on.
Let us denote by $N_d$ the number of all balls that are descendants of $B(0,1)$.
There is no percolation if and only if $N_d$ is almost surely finite.

Now denote by $m$ the Poisson distribution with mean $\lambda v_d 2^d $ : this is the law of the
number of balls of $\Sigma(\lambda \delta_1)$ that touch a given ball of radius $1$.
Therefore, if there were no interference between children of different balls, $N_d$ would be equal to $Z$, 
the total population in a Galton-Watson process with offspring distribution $m$.
Because of the interferences due to the fact that the Boolean model lives in $\R^d$, this is not true : in fact, $N_d$ is only stochastically dominated by $Z$.
Therefore, if $\lambda v_d2^d \le 1$, then $Z$ is finite almost surely, then  $N_d$ is finite almost surely
and therefore there is no percolation.
This implies 
$$\widetilde\lambda_d^c(\delta_1) = v_d2^d \lambda_d^c(\delta_1) > 1.$$
The difficult part of Theorem \ref{t:penrose} is to prove that if $d$ is large, then the interferences are small,
then $N_d$ is close to $Z$ and therefore there is percolation for large $d$ as soon as 
$v_d2^d \lambda$ is a constant striclty larger than one.

To sum up, at first order, the asymptotic behavior of the critical intensity of the Boolean model with constant radius is given by 
the threshold of the associated Galton-Watson process, as in the case of Bernoulli percolation on $\Z^d$ : roughly speaking, as the dimension increases, the geometrical constraints of the finite dimension space decrease and at the limit, we recover the non-geometrical case of the corresponding Galton-Watson process.

\subsection*{Critical intensity  in high dimension : the case of random radii.}

If $\mu$ is a finite measure on $(0,+\infty)$ and if $d\ge 2$ is an integer, we define a measure $\mu_d$ on $(0,+\infty)$ by setting :
\begin{equation}
\label{e:mud}
\mu_d(dr) = r^{-d} \mu(dr).
\end{equation}
Note that, for any $d$, the assumption \eqref{e:volumefini} is fulfilled by $\mu_d$, and that $(\delta_1)_d=\delta_1$.
Note also that $\mu_d$ is not necessarily a finite measure.
However the definitions made above still make sense in this case and we still have $\lambda_d^c(\mu_d) \in (0,+\infty)$
thanks to Theorem 1.1 in \cite{G-perco-generale}.

We will study the behavior of $\widetilde\lambda_d^c(\mu_d)$ as $d$ tends to infinity.
Let us motivate the definition of $\mu_d$ with the following two related properties :
\begin{enumerate}
\item
Consider the Boolean model $\Sigma(\lambda\mu_d)$ on $\R^d$ driven by $\lambda\mu_d$ where $\lambda>0$.
For any $0<s<t<\infty$, the number of balls of $\Sigma(\lambda\mu_d)$ with radius in $[s,t]$ that contains a given point
is a Poisson random variable with intensity:
$$
\int_{[s,t]} v_dr^d \lambda\mu_d(dr) = v_d \lambda\mu([s,t]). 
$$
Loosely speaking, this means that contrary to what happens in the Boolean model driven by $\lambda \mu$, the relative importance of radii of different sizes does not depend on the dimension $d$ in the Boolean model driven by $\lambda \mu_d$.
\item
A closely related property is the following one.
Consider for example the case $\mu=\alpha \delta_a + \beta \delta_b$.
Then, a way to build $\Sigma(\lambda\mu_d)$ is to proceed as follows.
Consider two independent Boolean model: 
$\Sigma^A$, driven by $\lambda\alpha \delta_1$, and $\Sigma^B$, driven by $\lambda\beta \delta_1$.
Then set $\Sigma(\lambda\mu_d) = a \Sigma^A + b \Sigma^B$.
\end{enumerate}
We prove the following result :
\begin{theorem}  \label{t:general}	
Let $\mu$ be a finite measure on $(0,+\infty)$. 
We assume that the mass of $\mu$ is positive and that $\mu$ is not concentrated on a singleton.
Then :
$$
\limsup_{d \to + \infty} \frac1d \ln\left(\widetilde{\lambda}^c_d(\mu_d)\right) < 0.
$$
\end{theorem}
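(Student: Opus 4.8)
\proof

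We only sketch how we would obtain the bound. The plan is to show that, for every large $d$, the Boolean model $\Sigma(\lambda\mu_d)$ percolates already at an intensity $\lambda$ for which $\lambda v_d2^d$ is exponentially small in $d$. Since $\widetilde{\lambda}^c_d(\mu_d)=\lambda^c_d(\mu_d)\,v_d2^d\,\mu((0,\infty))$ and the total mass $\mu((0,\infty))$ does not depend on $d$, such a statement gives $\limsup_d\frac1d\ln\widetilde{\lambda}^c_d(\mu_d)<0$ at once.

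\textbf{Step 1: reduction to two well-separated scales.} Since $\mu$ is not a Dirac mass, its support contains two points $0<p<q$. I would fix a small $\eta>0$ (to be adjusted later) such that $I:=[p-\eta,p+\eta]$ and $J:=[q-\eta,q+\eta]$ are disjoint subsets of $(0,\infty)$, and set $\mu':=\mu|_I+\mu|_J$, $\alpha:=\mu'(I)>0$, $\beta:=\mu'(J)>0$. Because $\mu'\le\mu$ and $r\mapsto r^{-d}$ is nonnegative, $\mu'_d\le\mu_d$, hence $\Sigma(\lambda\mu'_d)\subseteq\Sigma(\lambda\mu_d)$ and $\lambda^c_d(\mu_d)\le\lambda^c_d(\mu'_d)$; since $\mu((0,\infty))/\mu'((0,\infty))$ is a constant, $\frac1d\ln\widetilde{\lambda}^c_d(\mu_d)\le\frac1d\ln\widetilde{\lambda}^c_d(\mu'_d)+o(1)$, so it suffices to treat $\mu'$. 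From now on a ball of $\Sigma(\lambda\mu'_d)$ is \emph{small} if its radius lies in $I$ and \emph{large} if it lies in $J$; the small balls and the large balls then form two independent Boolean models, as in the two-model decomposition noted above.

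\textbf{Step 2: the branching estimate.} I would analyse the genealogy of balls rooted at the deterministic ball $B(0,p)$, where generation $0$ is $\{B(0,p)\}$ and generation $k+1$ consists of the balls of $\Sigma(\lambda\mu'_d)$ that meet a ball of generation $k$ without belonging to generations $\le k$; there is percolation as soon as this genealogy is infinite with positive probability. Writing $x=\lambda v_d2^d$ and computing the relevant Poisson intensities, the $2\times2$ mean-offspring matrix $M$ of the associated two-type (small/large) branching process satisfies $M_{\mathrm{large}\to\mathrm{small}}\ge\alpha x\big(\tfrac{p+q}{2(p+\eta)}\big)^d$ and $M_{\mathrm{small}\to\mathrm{large}}\ge\beta x\big(\tfrac{p+q}{2(q+\eta)}\big)^d$; their product is at least $\alpha\beta x^2\kappa(\eta)^d$ with $\kappa(\eta)=\frac{(p+q)^2}{4(p+\eta)(q+\eta)}\to\frac{(p+q)^2}{4pq}>1$ as $\eta\to0$, the gain over $1$ being nothing but $(p-q)^2>0$. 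I would then fix $\eta$ small enough that $\kappa:=\kappa(\eta)>1$, and choose $\lambda=\lambda_d$ so that $x=\lambda_d v_d2^d$ equals $\kappa^{-d/4}$: the two-step mean number of small descendants of a small ball, bounded below by $\alpha\beta x^2\kappa^d=\alpha\beta\kappa^{d/2}$, then tends to $+\infty$, while $\lambda_d v_d2^d\to0$ exponentially.

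\textbf{Step 3: from branching to percolation, and conclusion.} The remaining and hardest point is to turn this branching lower bound into actual percolation of $\Sigma(\lambda_d\mu'_d)$ for large $d$; this is the two-radius analogue of the difficult half of Penrose's Theorem~\ref{t:penrose}, and I expect it to carry essentially all of the work. One has to show that the geometric interferences peculiar to $\R^d$ — a child ball landing on an already-explored ball, or children of distinct balls coinciding or lying close enough to be re-linked — affect only an $o(1)$ fraction of the potential offspring, so that the genealogy dominates a supercritical multitype Galton--Watson process. Two high-dimensional facts would be the main tools: first, the model is extremely sparse ($\lambda_d v_d2^d\to0$), so with probability $1-o(1)$ a large ball meets no other large ball and a small ball meets no other small ball directly, whence the cluster is, up to negligible events, of the form large$\to$small$\to$large with each intermediate small ball sitting in a thin shell around the bounding sphere of its parent large ball; second, independent high-dimensional displacements are nearly orthogonal, which I would use to bound the probability that two large balls reached through distinct small children of a common large ball are re-linked, and hence to extract a family of "good" offspring whose expected number still diverges and whose subtrees are approximately independent. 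The exploration/renormalization argument of \cite{Penrose-high-dimensions}, adapted to this two-type situation, would then give percolation at $\lambda_d$, so that $\lambda^c_d(\mu'_d)\le\lambda_d$ and $\widetilde{\lambda}^c_d(\mu'_d)\le\kappa^{-d/4}(\alpha+\beta)$. Therefore $\limsup_d\frac1d\ln\widetilde{\lambda}^c_d(\mu'_d)\le-\frac14\ln\kappa<0$, and by Step 1 the same bound holds for $\mu$. \finpreuve
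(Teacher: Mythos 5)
Your Step 1 (restricting $\mu$ to two small windows of its support and comparing with a two-atom measure) is sound and is essentially the same reduction the paper uses to deduce Theorem \ref{t:general} from the two-radius case. The proposal breaks down at Steps 2--3. The underlying assumption there --- that in high dimension the geometric interferences are negligible, so that a supercritical two-type Galton--Watson mean computation forces percolation --- is exactly what fails when the two radii are far apart, and the theorem must cover, e.g., $\mu=\delta_1+\delta_{100}$, where you have no freedom to pick a closer pair in the support. Quantitatively, with $\rho=q/p$ your choice $\lambda v_d2^d=\bigl(\tfrac{(p+q)^2}{4pq}\bigr)^{-d/4}$ amounts to taking the paper's parameter $\kappa=\bigl(\tfrac{2\sqrt\rho}{1+\rho}\bigr)^{1/2}$, which tends to $0$ as $\rho\to\infty$; on the other hand $\kappa^c_\rho(k)\ge\max\bigl(\bigl(\tfrac{4\rho}{(1+\rho)^2}\bigr)^{1/(k+1)},\tfrac{\rho}{\rho+k}\bigr)$ for every $k$, which forces $\kappa^c_\rho\to1$ as $\rho\to\infty$. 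Hence for large $\rho$ your intensity lies strictly below $\kappa^c_\rho$, and Proposition \ref{p:subcritical} shows there is \emph{no} percolation there for all large $d$: the statement you defer to Step 3 is not merely hard, it is false. The mechanism is visible already at the level of means: every small child of a large ball lies within distance $p+q$ of its centre, so all large ``grandchildren'' sit in a ball of radius about $2(p+q)$, and their expected number is of order $\lambda v_d q^{-d}(2p+2q)^d=x\,\bigl(\tfrac{p+q}{q}\bigr)^d$, which is exponentially small at your intensity when $\rho$ is large. The two-type Galton--Watson product $\alpha\beta x^2\bigl(\tfrac{(p+q)^2}{4pq}\bigr)^d$ diverges only because it counts the same few large balls over and over through many small intermediaries; this double counting is a genuine geometric constraint that does not vanish as $d\to\infty$.

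This is precisely why the paper does not argue through the one-alternation (large--small--large) structure with a GW domination. Its route is: (i) consider $k$-alternating paths with $k$ balls of radius $1$ between consecutive balls of radius $\rho$, with $k$ allowed to grow with $\rho$ so the chain can travel far enough to reach genuinely new large balls, and with the radial increments $a_i$ optimized --- this is what the threshold $\kappa^c_\rho(k)$ in \eqref{d:kappack} encodes, its two arguments being the genealogical and the geometric constraints; (ii) prove the supercritical statement (Proposition \ref{p:onestep}) by an explicit construction in thin shells plus an embedding of supercritical oriented percolation, and the subcritical statement by slicing the balls (Lemma \ref{l:majorationk}); (iii) deduce Theorem \ref{t:2} and only then Theorem \ref{t:general} by your Step-1-type coupling. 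As the discussion after Lemma \ref{l:kc2} shows, the GW comparison is sharp on the logarithmic scale only for $1<\rho\le2$; your plan could be salvaged only in that regime (e.g.\ if the support of $\mu$ contains two points of ratio at most $2$), but not in the generality required by the theorem.
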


As $(\delta_1)_d=\delta_1$, a straightforward consequence of Theorem \ref{t:general} and Theorem \ref{t:penrose} 
-- or, actually, of the much weaker and easier convergence of $\ln(\widetilde{\lambda}_d^c(\delta_1))$ to $0$ --
 is the following result:

\begin{corollary} \label{t:conjecture}
Let $\mu$ be a finite mesure on $(0,+\infty)$.
We assume that the mass of $\mu$ is positive and that $\mu$ is not concentrated on a singleton.
Then, for any $d$ large enough, we have:
$$
\widetilde{\lambda}_d^c(\mu_d) < \widetilde{\lambda}_d^c(\delta_1), \text{ or equivalently }
c_d^c(\mu_d) < c_d^c(\delta_1).
$$
\end{corollary}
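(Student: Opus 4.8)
\proof
The corollary is immediate from Theorem~\ref{t:general}. That theorem provides $c>0$ and $d_0$ with $\widetilde{\lambda}^c_d(\mu_d)\le e^{-cd}$ for $d\ge d_0$, while $\widetilde{\lambda}^c_d(\delta_1)\to 1$ by Theorem~\ref{t:penrose} (the convergence $\frac1d\ln\widetilde{\lambda}^c_d(\delta_1)\to 0$ would already suffice); hence $\widetilde{\lambda}^c_d(\mu_d)<\widetilde{\lambda}^c_d(\delta_1)$ for all large $d$. Since, at fixed $d$, $c^c_d(\nu)=1-\exp\bigl(-\widetilde{\lambda}^c_d(\nu)2^{-d}\bigr)$ is strictly increasing in $\widetilde{\lambda}^c_d(\nu)$, and since $(\delta_1)_d=\delta_1$, this is equivalent to $c^c_d(\mu_d)<c^c_d(\delta_1)$. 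So everything reduces to Theorem~\ref{t:general}, which I would prove as follows.

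\emph{Reduction to two scales.} Replacing $\mu$ by its restriction $\mu'$ to two disjoint intervals of positive mass only removes balls, hence only increases $\lambda^c_d$; since $\int v_d(2r)^d\mu_d(dr)=v_d2^d\|\mu\|$, this replaces $\widetilde{\lambda}^c_d(\mu_d)$ by at most $(\|\mu\|/\|\mu'\|)\,\widetilde{\lambda}^c_d(\mu'_d)$ and leaves $\limsup\frac1d\ln\widetilde{\lambda}^c_d$ unchanged. Shrinking the intervals and rescaling via~\eqref{e:scaling}, it suffices to treat $\mu=\alpha\delta_1+\beta\delta_\rho$ with fixed $\alpha,\beta>0$ and $\rho\ne1$. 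Then $\mu_d=\alpha\delta_1+\beta\rho^{-d}\delta_\rho$ and $\Sigma(\lambda\mu_d)=\Sigma^{\mathrm{s}}\cup\Sigma^{\mathrm{b}}$, with $\Sigma^{\mathrm{s}},\Sigma^{\mathrm{b}}$ independent, $\Sigma^{\mathrm{s}}$ made of radius-$1$ balls with centre intensity $\lambda\alpha$, and $\Sigma^{\mathrm{b}}$ made of radius-$\rho$ balls with centre intensity $\lambda\beta\rho^{-d}$ (rescaled by $\rho^{-1}$, again a radius-$1$ model of intensity $\lambda\beta$).

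\emph{The mechanism and the conclusion.} Write $\lambda=\theta/(v_d2^d)$ and run a branching/renormalisation argument on the big balls; the point is that in high dimension the sphere of a big ball dwarfs the reach of a small ball. A fixed big ball is met, in the mean, by $\lambda\alpha\,v_d\bigl((\rho+1)^d-(\rho-1)^d\bigr)\sim\alpha\theta\bigl(\tfrac{\rho+1}{2}\bigr)^d$ small balls of $\Sigma^{\mathrm{s}}$, an exponentially large number even for small $\theta$. A small ball at $c$ meets a further big ball iff that ball's centre lies in $B(c,\rho+1)$; summing over the cloud of small balls hugging the big ball's sphere via the Poisson union identity $\int\bigl(1-\exp(-\lambda\alpha\,|B(y,\rho+1)\cap\mathrm{shell}|)\bigr)\,dy$, and using the elementary facts $(\rho+1)^2>4\rho$ and $(\rho+1)/\sqrt{\rho}>2$ (both valid exactly because $\rho\ne1$), one checks that this union has mean volume $\sim v_dR(\rho)^d$ with $R(\rho)>2\rho$. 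Multiplying by the big-ball centre intensity $\lambda\beta\rho^{-d}$, the mean number of big balls reachable from a big ball in one small-ball hop is of order $\beta\theta\,(R(\rho)/2\rho)^d$, which tends to $\infty$ for any fixed $\theta>0$, and even for $\theta=\theta_d\to0$ as long as $\theta_d(R(\rho)/2\rho)^d\to\infty$. A sprinkling/BK argument making distinct big balls and their clouds use essentially disjoint regions — in the spirit of the hard direction of Theorem~\ref{t:penrose} and of the multiscale constructions of Meester--Roy--Sarkar and Menshikov--Popov--Vachkovskaia — then yields a genuinely supercritical Galton--Watson lower bound, hence percolation, at $\lambda=\theta_d/(v_d2^d)$ for $d$ large. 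Taking $\theta_d=e^{-c'd}$ with $0<c'<\ln(R(\rho)/2\rho)$ gives $\lambda^c_d(\mu_d)\le e^{-c'd}/(v_d2^d)$, whence $\widetilde{\lambda}^c_d(\mu_d)=\lambda^c_d(\mu_d)v_d2^d(\alpha+\beta)\le e^{-c'd}(\alpha+\beta)$ and $\limsup\frac1d\ln\widetilde{\lambda}^c_d(\mu_d)\le-c'<0$.

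\emph{Main obstacle.} The volume estimate above is soft: it is driven entirely by the arithmetic--geometric-mean slack that opens up as soon as $\rho\ne1$. The real work is the passage from the branching heuristic to genuine percolation: the small-ball clouds of different big balls may overlap and may stray far from their generating spheres, so one has to organise an exploration keeping the relevant regions disjoint and control the residual dependence, exactly as in Penrose's treatment of the constant-radius case. A further, lighter, issue is to verify that the exponential supercriticality of the offspring mean survives the BK/sprinkling losses uniformly as $\theta_d\to0$, so that one really obtains the exponential decay in Theorem~\ref{t:general} and not merely boundedness of $\widetilde{\lambda}^c_d(\mu_d)$ — the latter already sufficing for the present corollary.
\finpreuve
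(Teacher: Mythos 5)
Your first paragraph is exactly the paper's own (one-line) proof: the corollary follows from Theorem~\ref{t:general} together with Theorem~\ref{t:penrose} (or merely $\frac1d\ln\widetilde{\lambda}^c_d(\delta_1)\to 0$), plus the strict monotonicity of $c^c_d(\nu)=1-\exp(-\widetilde{\lambda}^c_d(\nu)2^{-d})$ in $\widetilde{\lambda}^c_d(\nu)$ and the identity $(\delta_1)_d=\delta_1$, so the proposal is correct and takes the same route. The appended sketch of Theorem~\ref{t:general} is not needed here (the paper proves it separately, by reducing to two atoms and then to the $k$-alternating-path thresholds $\kappa^c_\rho(k)$ with a comparison to oriented percolation), and as you yourself note it remains heuristic at the supercritical step; this does not affect the validity of your proof of the corollary, which legitimately cites Theorem~\ref{t:general}.
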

In fact, Theorem \ref{t:general} follows from the particular case of radii taking only two different values, for which we have more precise results.

\subsection*{Critical intensity  in high dimension : the case of radii taking two values.}

To state the result, we need some further notations.
Fix $\rho>1$. 
Fix $k \ge 1$.
Set $r_1=r_{k+1}=1+\rho$, and for $i \in \{2, \dots, k\}$, $r_i=2$.
For $(a_i)_{2 \le i \le k+1} \in [0,1)^k$, we build an increasing sequence of distances $(d_i)_{1 \le i \le k+1}$ by setting $d_1=1+\rho$ and, for every $i\in \{2, \dots,k+1\}$: 
$$
d_{i}^2=d_{i-1}^2+2r_ia_id_{i-1}+r_i^2.
$$
Note that the sequence $(d_i)_{1 \le i \le k+1}$ depends on $\rho$, $k$, and the $a_i$'s. \\
We set $\L(a_2,\dots,a_{k+1})=d_{k+1}$.
Now set, for every $k \ge 1$, 
\begin{equation}
\label{d:kappack}
\kappa^c_{\rho}(k) = \inf_{0 \le a_2 ,\dots, a_{k+1} < 1} 
\max\left(
\left(\frac{4\rho}{(1+\rho)^2\sqrt{\prod_{2 \le i \le k+1}(1-a_i^2)}}\right)^{\frac1{k+1}},\frac{2\rho}{\L(a_2,\dots,a_{k+1})}
\right).
\end{equation}
Finally, let:
\begin{equation}
\label{d:kappac}
\kappa^c_{\rho} = \inf_{k \ge 1} \kappa^c_{\rho}(k).
\end{equation}
We give some intuition on $\kappa^c_\rho$ in Section \ref{s:ideas}. 
On Figure \ref{f:leskck}, we plot $\kappa^c_\rho(i)$, for $i \in\{1,2,3\}$. 
The data come from the formulas in Lemma \ref{l:kc2} for $i=1$ and from numerical estimations for $i \ge 2$.
\begin{figure}[h!]
\includegraphics[scale=0.7]{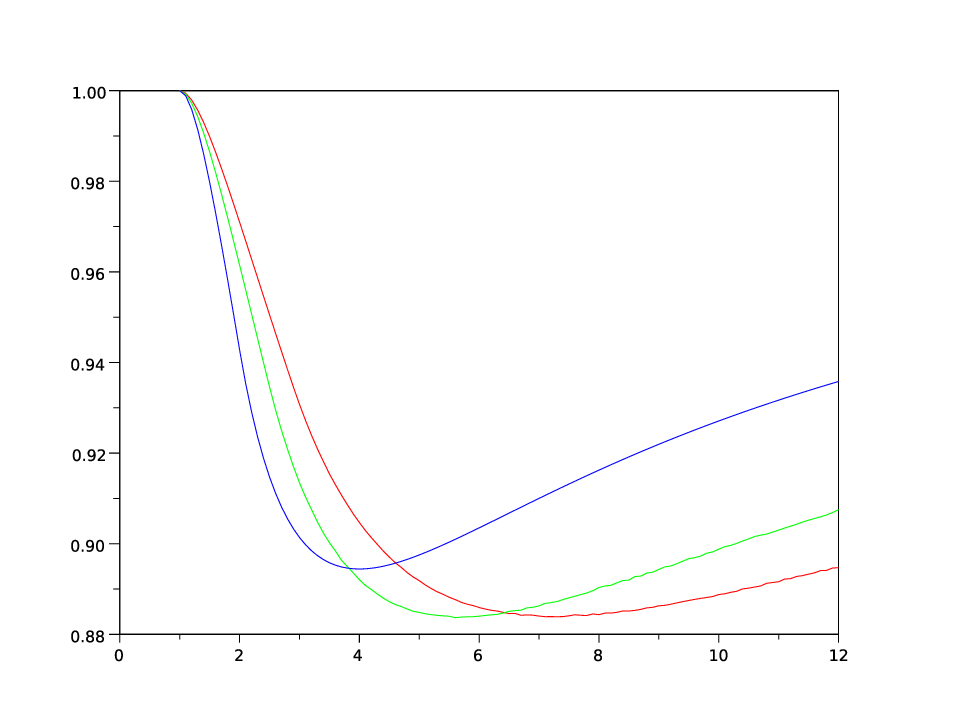}
\caption{$\kappa^c_\rho(i)$, $i \in\{1,2,3\}$, from left to right.}
\label{f:leskck}
\end{figure}

 This $\kappa_{\rho}^c$ gives the asymptotic behaviour of $\widetilde{\lambda}^c_d(\mu_d)$  when $\mu$ charges two distinct points :

\begin{theorem} \label{t:2}
Let $b>a>0$, $\alpha>0$ and $\beta>0$.
Set $\mu = \alpha \delta_a+\beta \delta_b$ and $\rho=b/a>1$.
Then
\begin{equation}\label{e:2}
\lim_{d \to +\infty} \frac1d \ln\left(\widetilde{\lambda}^c_d(\mu_d)\right) = \ln\left(\kappa_{\rho}^c\right)<0.
\end{equation}
\end{theorem}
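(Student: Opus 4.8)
\proof
The plan is to determine the exponential rate of $\widetilde\lambda^c_d(\mu_d)$ by matching two estimates, both built around \emph{chains of unit balls joining balls of the larger radius}: a lower bound expressing the subcriticality of such an exploration, and an upper bound of Penrose type. The two quantities inside the $\max$ in \eqref{d:kappack} will be exactly the two constraints a useful chain has to satisfy. As a preliminary, the scaling relations \eqref{e:scaling} and \eqref{e:scaling2} (apply $H^{1/a}$, then rescale the total mass) give $\widetilde\lambda^c_d(\mu_d)=\widetilde\lambda^c_d\big((\delta_1+\gamma\delta_\rho)_d\big)$ with $\gamma=\beta/\alpha$; changing $\gamma$ only multiplies the offspring counts below by a fixed constant, which disappears after a $d$-th root, so we may take $\gamma=1$, i.e. $\mu_d=\delta_1+\rho^{-d}\delta_\rho$, which (property~(2) above) is realised by superimposing an intensity-$\lambda$ field of unit balls with an independent field of radius-$\rho$ balls whose centres form an intensity-$\lambda\rho^{-d}$ Poisson process. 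Since $\int v_d(2r)^d\mu_d(dr)=2^{d+1}v_d$, the normalised intensity is $\widetilde\lambda:=2^{d+1}v_d\lambda$ (with $\widetilde\lambda^c_d(\mu_d)$ its value at criticality), and it is equivalent to show $\widetilde\lambda^c_d(\mu_d)^{1/d}\to\kappa^c_\rho$.

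\emph{Lower bound: no percolation when $\widetilde\lambda^{1/d}$ stays below $\kappa^c_\rho$.} Explore the connected component of a tagged $\rho$-ball through the $\rho$-balls it reaches, recording for each step the \emph{combinatorial length} $k$ and the \emph{overlap parameters} $(a_i)_{2\le i\le k+1}$ of the connecting chain of unit balls, arranged so that consecutive chains realise precisely the geometry of the sequence $(d_i)$ of the statement; percolation forces arbitrarily long self-avoiding sequences of $\rho$-balls of this form, and a first-moment (Peierls) bound reduces matters to the expected number of chains of a fixed type $(k,(a_i))$ issued from a $\rho$-ball and ending at a \emph{fresh} $\rho$-ball. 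Two high-dimensional volume estimates govern this quantity. First, a type-$(k,(a_i))$ chain reaches out to distance $\asymp\L(a_2,\dots,a_{k+1})$, whereas near criticality the $\rho$-balls, of intensity $\lambda\rho^{-d}$, are spaced at distances $\asymp\rho(\lambda v_d)^{-1/d}$, so such a chain can land on a fresh $\rho$-ball only when $\L\gtrsim\rho(\lambda v_d)^{-1/d}$, i.e. when $\widetilde\lambda^{1/d}\gtrsim 2\rho/\L$ --- the second entry of $\kappa^c_\rho(k)$. Second, for the types meeting this constraint, the product of the per-step volumes --- contributing $(1+\rho)^{2d}$ from the two $\rho$-to-unit steps, $2^{(k-1)d}$ from the intermediate unit-to-unit steps, $\rho^{-d}$ from the density of $\rho$-balls, and $\prod_{i=2}^{k+1}(1-a_i^2)^{d/2}$ from the prescribed overlaps --- shows that this expected number is exponentially small precisely when $\widetilde\lambda^{1/d}$ lies below the first entry $\big(4\rho/((1+\rho)^2\prod_i(1-a_i^2)^{1/2})\big)^{1/(k+1)}$ of $\kappa^c_\rho(k)$. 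By the $\max$-structure of \eqref{d:kappack}, if $\widetilde\lambda^{1/d}<\kappa^c_\rho$ then every admissible type contributes an exponentially small amount; summing over $k$ and over a mesh of $[0,1)^k$ of subexponential cardinality keeps the total expected offspring below $1$, so the exploration is stochastically dominated by a subcritical branching process and dies out. (The unit-ball field by itself is subcritical, since $\widetilde\lambda^{1/d}<\kappa^c_\rho<1$ gives $\lambda v_d2^d<1$.) Note that this genuinely improves on the naive two-type branching bound, which alone would only give the weaker estimate $\liminf\ge\ln\big(2\sqrt\rho/(1+\rho)\big)$. We conclude $\liminf_d\frac1d\ln\widetilde\lambda^c_d(\mu_d)\ge\ln\kappa^c_\rho$.

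\emph{Upper bound: percolation when $\widetilde\lambda^{1/d}$ eventually exceeds $\kappa^c_\rho+\epsilon$.} This is the delicate direction, extending the hard part of Theorem~\ref{t:penrose}. Choose $k$ and $(a_i)$ with $\max\big(A,2\rho/\L\big)<\kappa^c_\rho+\epsilon/2$, where $A$ is the first entry of $\kappa^c_\rho(k)$, and declare $G\rightsquigarrow G'$ when some chain of unit balls of combinatorial length $k$, with overlap parameters in small windows around $(a_i)$, joins the $\rho$-balls $G$ and $G'$ while staying away from $G$. For $d$ large one establishes two facts: (i) the mean number of children of a $\rho$-ball diverges --- this combines ``there are exponentially many candidate $\rho$-balls within distance $\asymp\L$'', which uses $\widetilde\lambda^{1/d}>2\rho/\L$, with ``each candidate is linked by exponentially many admissible chains'', which uses $\widetilde\lambda^{1/d}>A$; and (ii) the geometric interferences vanish as $d\to\infty$: the probability that the chains issued from two different $\rho$-balls meet, or that a chain comes back near an ancestor, is subexponentially smaller than the number of chains, because admissible chains are spread out in $\R^d$ and $\rho$-balls are exponentially sparse. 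Facts (i)--(ii) allow one to dominate the renormalised process from below by a supercritical Galton--Watson process with arbitrarily large offspring mean, whose survival produces an unbounded connected component of $\Sigma(\lambda\mu_d)$. Hence $\limsup_d\frac1d\ln\widetilde\lambda^c_d(\mu_d)\le\ln\kappa^c_\rho$.

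Combining the two bounds gives $\lim_d\frac1d\ln\widetilde\lambda^c_d(\mu_d)=\ln\kappa^c_\rho$, and $\kappa^c_\rho<1$: indeed, taking all $a_i=0$ and $k$ large, both entries of \eqref{d:kappack} are $<1$, since $4\rho/(1+\rho)^2<1$ for $\rho\ne1$ and $\L\to+\infty$. I expect the main obstacle to be point (ii) of the upper bound --- showing that the geometric dependencies of the renormalised $\rho$-ball process die out as $d\to\infty$, uniformly in the chain shape $(k,(a_i))$ --- which requires volume estimates for lenses and for tubular neighbourhoods of chains in $\R^d$ substantially finer than the single-scale computations underpinning Theorem~\ref{t:penrose}. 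A second, smaller difficulty is making rigorous, in the lower bound, the claim that short chains are genuinely useless because the nearest fresh $\rho$-ball is typically beyond their reach. \finpreuve
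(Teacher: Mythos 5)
Your lower bound is essentially the paper's own argument (Subsection \ref{s:subcritical}): a first-moment bound over chains classified by their combinatorial length $k$ and a discretized family of overlap parameters, taking for each type the minimum of a genealogical estimate and a geometric one, summing over $k$, then a renormalized exploration of the $\rho$-balls together with a separate verification that the unit-ball field alone is subcritical. One remark: the point you flag as a remaining difficulty (``short chains are useless because the nearest fresh $\rho$-ball is typically out of reach'') is not actually needed in that form; the geometric entry of \eqref{d:kappack} is obtained directly by observing that all admissible endpoints lie in $B(0,\L(a_2^+,\dots,a_{k+1}^+))$, so $E(N_k(\text{type}))\le \lambda_\rho v_d \L^d=(\kappa\L/(2\rho))^d$, exactly as in Lemma \ref{l:majorationk} --- no typicality statement is required, the Peierls sum already does the work.

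The genuine gap is in the supercritical direction. You correctly identify the two ingredients (offspring mean of chains diverging when $\kappa$ exceeds the genealogical entry, and an interference control in which the geometric entry must intervene), but the concluding step ``facts (i)--(ii) allow one to dominate the renormalised process from below by a supercritical Galton--Watson process'' is precisely the hard part, and neither (ii) nor the domination is established; indeed (ii) as you state it (sparseness of $\rho$-balls, finer lens/tube volume estimates) is not the mechanism that makes the argument work. The paper instead fixes $(a_i)$ satisfying the strict double inequality $1<\kappa^{k+1}\frac{(1+\rho)^2}{4\rho}\sqrt{\prod_{2\le j\le k+1}(1-a_j^2)}<\kappa\frac{\L}{2\rho}$ --- note that the geometric term enters the supercritical side through the second inequality, which guarantees that the number of surviving branches is exponentially smaller than the available volume for the terminal $\rho$-point, hence that collisions are rare --- and proves a one-step statement (Proposition \ref{p:onestep}): with probability tending to $1$, a $k$-alternating chain exists that is confined to a prescribed block $W^+$ with its $i$-th point in a prescribed shell-and-cone region $D_i$, the proof pruning to at most one child per branch and bounding the interference terms $E(\#J_{i-1})|D''_i|/|C''_i|$ by means of the inequality above. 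Crucially, independence across successive steps is then not obtained by refining volume estimates over infinitely many generations, but structurally: the one-step events are realised in pairwise disjoint blocks $W^{\pm}_{a,n}$ and compared with supercritical two-dimensional oriented percolation (as in Penrose's argument), which yields an infinite $k$-alternating path. Without this (or an equivalent block-renormalization device) your upper bound remains a plausible plan rather than a proof, as you yourself anticipate.
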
	
If one does not normalize the distribution one has 
\footnote{The upper bound can be proven using $\lambda_d^c(\alpha \delta_a+\beta \delta_b) \le \lambda_d^c(\beta \delta_b)$.
The lower bound can be proven using the easy part of the comparison with a two-type Galton-Watson process.}
$\widetilde{\lambda}_d^c(\alpha \delta_a+\beta \delta_b) \to 1$ and thus 
$\widetilde{\lambda}_d^c(\alpha \delta_a+\beta \delta_b) \sim \widetilde{\lambda}_d^c(\delta_1)$.
This behaviour is due to the fact that, without normalization, the influence of the small balls vanishes in high dimension.

\medskip

In the next lemma we collect some properties of the $\kappa_\rho^c(k)$'s and $\kappa_\rho^c$.
The only result needed for the proof of our main results is $0<\kappa^c_\rho<1$.

\begin{lemma} \label{l:kc2}
Let $\rho>1$. 

$\bullet$  $0<\kappa^c_\rho(1)<1$. More precisely :
$$\text{If } 1<\rho \le 2 \text{ then } \kappa^c_\rho(1) = \frac{2\sqrt{\rho}}{1+\rho}, \quad \text{ while if }
\rho \ge 2 \text{ then } \kappa^c_\rho(1) = \frac{\sqrt{4+\rho^2}}{1+\rho}.$$

$\bullet$ $0<\kappa^c_\rho<1$.

$\bullet$ There exists $\rho_0 > 2$ such that if $\rho \le \rho_0$, then $\kappa^c_\rho=\kappa^c_\rho(1)$. This implies
$$\text{If } 1<\rho \le 2 \text{ then } \kappa^c_\rho=\frac{2\sqrt{\rho}}{1+\rho} , \quad \text{ while if }
2 \le \rho \le \rho_0 \text{ then } \kappa^c_\rho = \frac{\sqrt{4+\rho^2}}{1+\rho}.$$

$\bullet$ As $\rho$ goes to $+\infty$,  $\displaystyle \kappa^c_\rho(k)=1-\frac{k}\rho+o(1/\rho)$. 
Thus one can not restrict the infimum in~\eqref{d:kappac} to a finite number of $k$.
\end{lemma}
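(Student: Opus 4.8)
The plan is to treat the three items as an explicit one–variable optimization (first item) followed by a uniform‑in‑$k$ comparison of the infima $\kappa^c_\rho(k)$ of \eqref{d:kappack} with $\kappa^c_\rho(1)$ (last two items). For the first bullet I would simply compute. When $k=1$ we have $r_2=1+\rho$, hence $d_1=1+\rho$ and $\L(a_2)^2=(1+\rho)^2+2(1+\rho)a_2(1+\rho)+(1+\rho)^2=2(1+\rho)^2(1+a_2)$, so that \eqref{d:kappack} becomes
\[
\kappa^c_\rho(1)=\frac{1}{1+\rho}\ \inf_{0\le a<1}\ \max\left(\frac{2\sqrt{\rho}}{(1-a^2)^{1/4}},\ \frac{\sqrt{2}\,\rho}{\sqrt{1+a}}\right).
\]
Inside the maximum the first function is increasing in $a$ and the second decreasing, so the infimum is attained either at $a=0$ (precisely when $2\sqrt\rho\ge\sqrt2\,\rho$, i.e.\ $\rho\le2$, since then the increasing term already dominates everywhere) or at the unique $a^\star\in(0,1)$ where the two terms coincide (when $\rho>2$). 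Solving $2\sqrt\rho\,\sqrt{1+a}=\sqrt2\,\rho\,(1-a^2)^{1/4}$ gives $a^\star=\frac{\rho^2-4}{\rho^2+4}$, and substituting back one reads off $\kappa^c_\rho(1)=\frac{2\sqrt\rho}{1+\rho}$ for $1<\rho\le2$ and $\kappa^c_\rho(1)=\frac{\sqrt{4+\rho^2}}{1+\rho}$ for $\rho\ge2$. That these values lie in $(0,1)$ is elementary: $\frac{2\sqrt\rho}{1+\rho}<1\Leftrightarrow(\sqrt\rho-1)^2>0$, and $\frac{\sqrt{4+\rho^2}}{1+\rho}<1\Leftrightarrow 2\rho>3$.

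For the second bullet, the bound $\kappa^c_\rho<1$ is immediate from $\kappa^c_\rho\le\kappa^c_\rho(1)<1$. For positivity I would use only the trivial estimate $\prod_{2\le i\le k+1}(1-a_i^2)\le1$: it forces the first argument of the maximum in \eqref{d:kappack} to be at least $g_\rho(k):=\left(\frac{4\rho}{(1+\rho)^2}\right)^{1/(k+1)}$, hence $\kappa^c_\rho(k)\ge g_\rho(k)$ for every $k$. Since $\frac{4\rho}{(1+\rho)^2}\in(0,1)$ for $\rho>1$, the sequence $g_\rho(k)$ is increasing in $k$, so $g_\rho(k)\ge g_\rho(1)=\frac{2\sqrt\rho}{1+\rho}$, and therefore $\kappa^c_\rho=\inf_{k\ge1}\kappa^c_\rho(k)\ge\frac{2\sqrt\rho}{1+\rho}>0$.

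The third bullet is where some care is needed, since a priori infinitely many thresholds $\kappa^c_\rho(k)$ must be compared to $\kappa^c_\rho(1)$. For $1<\rho\le2$ there is nothing to do: by the first bullet $\kappa^c_\rho(1)=\frac{2\sqrt\rho}{1+\rho}=g_\rho(1)$, and by the second bullet $\kappa^c_\rho(k)\ge g_\rho(k)\ge g_\rho(1)=\kappa^c_\rho(1)$ for all $k$, so $\kappa^c_\rho=\kappa^c_\rho(1)$. The observation that handles $\rho$ slightly above $2$ is that the very same crude bound $g_\rho(k)$ already exceeds $\kappa^c_\rho(1)$ for every $k\ge2$ when $\rho$ is close enough to $2$: at $\rho=2$ one has $g_2(2)=(8/9)^{1/3}>(8/9)^{1/2}=\frac{2\sqrt2}{3}=\kappa^c_2(1)$, and $\rho\mapsto g_\rho(2)$ and $\rho\mapsto\kappa^c_\rho(1)=\frac{\sqrt{4+\rho^2}}{1+\rho}$ are continuous on $[2,\infty)$, so there is $\rho_0>2$ with $g_\rho(2)\ge\kappa^c_\rho(1)$ for all $\rho\in[2,\rho_0]$. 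For such $\rho$ and any $k\ge2$ we get $\kappa^c_\rho(k)\ge g_\rho(k)\ge g_\rho(2)\ge\kappa^c_\rho(1)$, while $k=1$ is trivial; hence $\kappa^c_\rho=\inf_{k\ge1}\kappa^c_\rho(k)=\kappa^c_\rho(1)$. Together with the case $1<\rho\le2$ this yields $\kappa^c_\rho=\kappa^c_\rho(1)$ for all $\rho\in(1,\rho_0]$.

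The only real subtlety, then, is the uniformity in $k$ in the last item, and it dissolves once one notices that the elementary inequality $\prod(1-a_i^2)\le1$ produces a lower bound $g_\rho(k)$ that is monotone in $k$ and strictly above $\kappa^c_\rho(1)$ from $k=2$ on as soon as $\rho$ is not too large — so the whole comparison reduces to the single value $k=2$ together with continuity of two explicit functions of $\rho$. (This crude bound is far from $\kappa^c_\rho(1)$ when $\rho$ is large, consistently with the phenomenon — discussed in Section \ref{s:kappac} — that there $\kappa^c_\rho$ is strictly below $\kappa^c_\rho(1)$.)
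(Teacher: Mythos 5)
Your proof is correct and follows essentially the same route as the paper: the explicit one-variable optimization of $\max(\phi_1,\phi_2)$ with the crossing point $a^\star=\frac{\rho^2-4}{\rho^2+4}$ for the first bullet, the crude bound $\kappa^c_\rho(k)\ge\left(\frac{4\rho}{(1+\rho)^2}\right)^{1/(k+1)}$ for positivity, and the comparison of $\kappa^c_\rho(1)$ with the $k=2$ bound $\left(\frac{4\rho}{(1+\rho)^2}\right)^{1/3}$ plus continuity at $\rho=2$ for the existence of $\rho_0>2$. You are in fact slightly more explicit than the paper about why the single value $k=2$ suffices (monotonicity of the bound in $k$), and your value $\phi_1(0)=\frac{2\sqrt\rho}{1+\rho}$ corrects a small typo in the paper's computation.
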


Remember that in the case of a deterministic radius (assumptions of Theorem \ref{t:penrose}), 
the first order of the asymptotic behavior of the critical intensity in high dimension is given by 
the threshold of the associated Galton-Watson process.

In the case of two distinct radii (assumptions of Theorem \ref{t:2}), it is thus natural to compare $\kappa_\rho^c$ with the critical parameter of  the associated Galton-Watson process, which is now two-type, one for each radius.

Consider for example $\mu=\lambda(\delta_1+\delta_\rho)$; then $\mu_d=\lambda \delta_1+\frac{\lambda}{\rho^d}\delta_\rho$. Take $\lambda=\frac{\kappa^d}{v_d2^d}$. Now consider the offspring distribution of type $\rho$ of an individual of type~$1$.
We define it to be the number of balls of a Boolean model directed by $\frac{\lambda}{\rho^d} \delta_\rho$ 
that intersects a given ball of radius $1$.
Therefore, this is a Poisson random variable with mean $\frac{\lambda}{\rho^d} v_d(1+\rho)^d$.
The other offspring distribution are defined similarly. 
We moreover assume that the offspring of type $1$ and $\rho$ of a given individual are independent.
The matrix of means of offspring distributions is thus given by:
$$
M_d=
\begin{pmatrix}
\lambda v_d(1+1)^d &  \frac{\lambda}{\rho^d} v_d(1+\rho)^d \\
\lambda v_d(1+\rho)^d & \frac{\lambda}{\rho^d} v_d(\rho+\rho)^d 
\end{pmatrix}= \kappa^d
\begin{pmatrix}
1 &  \left(\frac{1+\rho}{2\rho}\right)^d \\
\left(\frac{1+\rho}{2}\right)^d & 1 
\end{pmatrix} .
$$
Let $r_d$ denotes the largest eigenvalue of $M_d$.
The extinction probability of the two-type Galton-Watson process is $1$ if and only if $r_d \le 1$.
We have:
$$
r_d \sim \left(\frac{\kappa(1+\rho)}{2\sqrt{\rho}}\right)^d, \text{ and thus }\kappa = \frac{2\sqrt{\rho}}{1+\rho} \text{ is the critical parameter.}
$$ 
With Theorem \ref{t:2} and Lemma \ref{l:kc2} we thus see that the comparison with the two-type Galton-Watson is asymptotically 
sharp on a logarithmic scale when $1 < \rho \le 2$, but is not valid for $\rho>2$. This contrasts with the case of a deterministic radius. 

The proof of Theorem \ref{t:penrose} (the constant radii case studied by Penrose) relies on the comparaison with a one-type Galton process which does not depend on $d$.
In the two-values radii case, when $\kappa$ is above or even slighlty below its critical parameter for the Galton-Watson process,
the mean number of children of type $1$ of an individual of type $\rho$ tends to infinity. 
This partly explains why geometrical dependencies can not be handled in the same way as in the constant radii case.
This also partly explains why the critical value for percolation in not always given by the critical value for the Galton-Watson process.

\medskip
The proofs of Theorem \ref{t:2} and of Lemma \ref{l:kc2} are given in Section \ref{s:t:2}.
The main ideas of the proofs are given in Section \ref{s:ideas}.
Theorem \ref{t:general} is an easy consequence of Theorem \ref{t:2}.
The proof is given in Section \ref{s:t:general}.

\section{The case when the radii take two values}
\label{s:t:2}

Before proving Theorem \ref{t:2}, we begin with the proof of Lemma \ref{l:kc2}:

\subsection{Proof of Lemma \ref{l:kc2}} $\;$

\noindent
$\bullet$  By definition, 
$\displaystyle 
\kappa^c_\rho(1)=\inf_{0 \le a < 1} \max(\phi_1(a),\phi_2(a))
$,
where $\phi_1,\phi_2:[0,1) \to \R$ are defined by:
$$
\phi_1(a)=\frac{2\sqrt{\rho}}{(1+\rho)(1-a^2)^{1/4}} \quad \text{ and } \quad 
\phi_2(a)=\frac{\rho\sqrt{2}}{(1+\rho)\sqrt{1+a}}.
$$
If $\rho \le 2$ then $\phi_1(0) \ge \phi_2(0)$.
As $\phi_1$ is increasing and $\phi_2$ is decreasing, we get:
$$
\kappa^c_\rho(1)=\inf_{0 \le a <1} \phi_1(a) = \phi_1(0) = \frac{2\sqrt{\rho}}{1+\rho}.
$$
Assume, on the contrary, $\rho \ge 2$.
Set 
$$
a=\frac{\rho^2-4}{\rho^2+4} \in [0,1).
$$
Then $\phi_1(a) = \phi_2(a)$.
As $\phi_1$ is increasing and $\phi_2$ is decreasing, we get:
$$
\kappa_\rho^c(1)=\phi_1(a)=\phi_2(a)=\frac{\sqrt{4+\rho^2}}{1+\rho}.
$$
$\bullet$  Clearly we have, for every $k \ge 1$ :
\begin{eqnarray*}
\kappa^c_{\rho}(k)  
 & \ge  & 
\inf_{0 \le a_2 ,\dots, a_{k+1} < 1}\left(\frac{4\rho}{(1+\rho)^2\sqrt{\prod_{2 \le i \le k+1}(1-a_i^2)})}\right)^{1/(k+1)} 
  =  \left(\frac{4\rho}{(1+\rho)^2}\right)^{1/(k+1)}.
\end{eqnarray*}
Therefore, as $\kappa^c_{\rho} = \inf_{k \ge 1} \kappa^c_{\rho}(k)$,
$$
 0< \left(\frac{4\rho}{(1+\rho)^2}\right)^{1/2} \le \kappa^c_{\rho} \le \kappa^c_{\rho}(1) <1.
$$
$\bullet$ The last inequalities imply that $\kappa^c_{\rho}=\kappa^c_{\rho}(1)$ if $1<\rho \le 2$. In fact, as soon as 
\begin{equation} \label{e:12}
\kappa^c_{\rho}(1) \le  \left(\frac{4\rho}{(1+\rho)^2}\right)^{1/3},
\end{equation}
it is true that $\kappa^c_{\rho}(1) \le \kappa^c_{\rho}(k)$ for all $k \ge 2$ and therefore that $\kappa^c_{\rho}=\kappa^c_{\rho}(1)$. As the inequality in \eqref{e:12} is strict for $\rho=2$, we obtain by continuity the existence of $\rho_0>2$ such that for every $\rho\in(1,\rho_0)$,    $\kappa^c_\rho=\kappa^c_\rho(1)$.

\noindent
$\bullet$ The minoration follows easily from the following observation: by construction, we have $\mathcal D(a_2, \dots, a_{k+1}) \le 2(\rho +k)$. This implies
$$\kappa_\rho^c(k) \ge \inf_{a_2, \dots a_{k+1}}\frac{2\rho}{\mathcal D(a_2, \dots, a_{k+1})}\ge \frac{\rho}{\rho+k}=1-\frac{k}\rho+o(1/\rho).$$
To obtain the majoration, fix $k\ge 1$. Take $\mu>1/2$ and $\epsilon>0$ such that $\mu+(k-1)\epsilon<1$. Take, for $2 \le i \le k$, the specific values $a_i=\cos(\rho^{-\epsilon})=1+o(\rho^{-\epsilon})$ and $a_{k+1}=\cos(\rho^{-\mu})$. 
Hence, 
$$\frac{4\rho}{(1+\rho)^2\sqrt{\prod_{2 \le i \le k+1}(1-a_i^2)}}\sim 4\rho^{-1+(k-1)\epsilon+\mu}.$$
For $i \le k-1$, we have $\displaystyle 0 \le d_{i+1}-d_i\le d_{i+1}-a_id_i\le \frac{d_{i+1}^2-(a_id_i)^2}{d_{i+1}+a_id_i}\le \frac{2}{\rho+1}$. \\
By summation, we get
$d_k=(1+\rho)(1+\frac{2(k-1)}{1+\rho}+o(\rho^{-1}))$. Now, 
$$d_{k+1}^2  =  d_k^2 +2(1+\rho)d_k\cos(\rho^{-\mu})+(1+\rho)^2, \text{ and thus }
\frac{2\rho}{d_{k+1}}  =  1-\frac{k-1}\rho +o(\rho^{-1}).$$
Finally, $\displaystyle \kappa_\rho^c(k) \le \max \left( \left( \frac{4\rho}{(1+\rho)^2\sqrt{\prod_{2 \le i \le k+1}(1-a_i^2)}}\right)^\frac{1}{k+1}, \frac{2\rho}{d_{k+1}}\right) \le 1-\frac{k-1}\rho +o(\rho^{-1})$.\\
This ends the proof.
\finpreuve


\subsection{Notations and ideas of the proof of Theorem \ref{t:2}}
\label{s:ideas}

In the whole proof, we fix $\rho>1$ and $\kappa>0$.

Once the dimension $d \ge 1$ is given, we consider two independent stationary Poisson point processes on $\R^d$: $\chi_1$ and $\chi_\rho$, with respective intensities 
$$
\lambda_1=\frac{\kappa^d}{v_d 2^d} \quad \text{ and } \quad 
\lambda_{\rho}=\frac{\kappa^d}{v_d 2^d \rho^d}.
$$
With $\chi_1$ and $\chi_{\rho}$, we respectively associate the two Boolean models
$$
\Sigma_1 = \bigcup_{x \in \chi_1} B(x,1) \quad \text{ and } \quad 
\Sigma_{\rho} = \bigcup_{x \in \chi_{\rho}} B(x,\rho).
$$
Note that $\Sigma_{\rho}$ is an independent copy of $\rho \Sigma_1$.
Note also that the expected number of balls of $\Sigma_1$ that touches a given ball of radius $1$ is $\kappa^d$.
Thus the expected number of balls of~$\Sigma_{\rho}$ that touches a given ball of radius $\rho$ is also $\kappa^d$.

We focus on the percolation properties of the following two-type Boolean model
$$
\Sigma = \Sigma_1 \cup \Sigma_{\rho}.
$$
We begin by studying the existence of infinite $k$-alternating paths.
For $k \ge 1$, an infinite $k$-alternating path is an infinite path made of balls such that the radius of the first ball is $\rho$,
the radius of the next $k$ balls is $1$, the radius of the next ball is $\rho$ and so on.
For a fixed $k \ge 1$, we wonder whether infinite $k$-alternating paths exist and seek the critical threshold $\kappa^c_\rho(k)$ for their existence.
A natural first step is to study the following quantities:
\begin{eqnarray}
N_0 & = &  \#\{x_1 \in \chi_\rho : \|x_1\| <2\rho\}, \label{e:Nk}\\
\text{and for } k \ge 1, \quad N_k & = & \#\left\{
\begin{array}{l}
 x_{k+1} \in \chi_\rho: \exists (x_i)_{1 \le i \le k} \in \chi_1 \text{ distinct such that } \\
\|x_1\| < 1+\rho, \; \forall i \in \{1, \dots, k-1\} \, \|x_{i+1}-x_i\| < 2, \\ \|x_{k+1}-x_k\| < 1+\rho
\end{array}
\right\}. \nonumber
\end{eqnarray}
Fix $k \ge 1$. Remember that $\kappa^c_\rho(k)$ is defined in \eqref{d:kappack}.

\medskip
\subsubsection*{A lower bound for $\kappa^c_\rho(k)$} In Subsection \ref {s:subcritical}, we obtain lower bounds for $\kappa^c_\rho(k)$ by looking for upper bounds for $E(N_k)$. On one side, a natural genealogy is associated to the definition of $N_k$ (see also the comments below Theorem \ref{t:penrose} and below Lemma \ref{l:kc2}).
We start with an ancestor $x_0$ located at the origin.
We then seek his children in $\chi_1 \cap B(x_0,1+\rho)$: they constitute the first generation. If $x_1$ is one of those children, we then seek the children of $x_1$ in $\chi_1 \cap B(x_1,2)$ to build the second generation and so on.
On the other side, the process lives in $\R^d$ and the geometry induces dependences: 
if $x_1$ and $x'_1$ are two individuals of the first generation, their children are a priori dependent.
If we forget geometry and only consider genealogy, we get the following upper bound:
$$
E(N_k) \le \lambda_1|B(\cdot,1+\rho)| \left(\prod_{i=2}^k \lambda_1|B(\cdot,2)|\right) \lambda_{\rho}|B(\cdot,1+\rho)|.
$$
But the points of the last generation are in $B(0,2\rho+2k)$.
So if we forget genealogy and only consider geometry we get the following upper bound:
$$ 
E(N_k) \le \lambda_{\rho} |B(0,2\rho+2k)|.
$$
Expliciting the two previous bounds and combining them together, we get:
$$
E(N_k) \le \min \left(\frac{\kappa^{k+1}(1+\rho)^2}{4\rho}, \frac{\kappa (\rho+k)}{\rho}\right)^d.
$$
In this upper bound, the first argument of the  minimum is due to genealogy while the second one is due to geometry.
To get the geometrical term, we considered the worst case: 
the one in which, at each generation $i$, $x_i$ is as far from the origin as possible.
This gives a very poor bound.
To get a better bound, we proceed as follows.
Fix $a_2,\dots,a_{k+1} \in [0,1)$.
As before, we set $r_1=r_{k+1}=1+\rho$, and for $i \in \{2, \dots, k\}$, $r_i=2$ and we build the increasing sequence of distances $(d_i)_{1 \le i \le k+1}$ by setting $d_1=1+\rho$ and, for every $i\in \{2, \dots,k+1\}$: 
$$
d_{i}^2=d_{i-1}^2+2r_ia_id_{i-1}+r_i^2.
$$
See Figure \ref{f:lesdis} for a better understanding of these distances $d_i$.
\begin{figure}[h!]
\scalebox{1.3}{\input{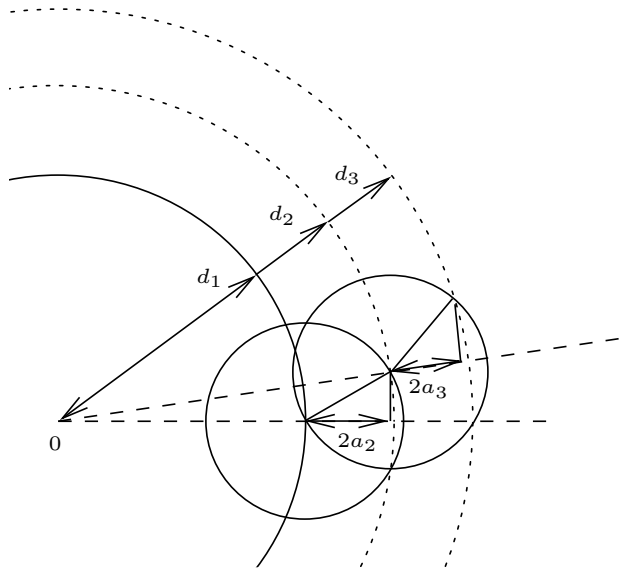}}
\caption{Definition of the distances $d_i$. Circles in plain line are of radius $1+\rho$ and $2$.} 
\label{f:lesdis}
\end{figure}

Denote by $\widetilde{N}_k(a_2,\dots,a_{k+1})$ the number of points $x_{k+1} \in \chi_{\rho}$ 
for which there exists a path $x_1, \dots, x_k$ fulfilling the same requirement as for $N_k$ and such that
$\|x_i\| \approx d_i$ for all~$i$.
Proceeding as before, we obtain the following upper bound:
$$
E(\widetilde{N}_k(a_2,\dots,a_{k+1})) \lesssim 
\min\left(
\kappa^{k+1}\frac{(1+\rho)^2}{4\rho} \sqrt{\prod_{2 \le i \le k+1}(1-a_i)^2},\frac{\kappa \L(a_2,\dots,a_{k+1})}{2\rho}
\right) ^d.
$$
Here again, the first argument of the  minimum is due to genealogy while the second one is due to geometry
\footnote{
There is essentially no geometrical constraint in generations $1$ to $k$.
Very roughly, this is due to the fact that, when $i$ increases from $2$ to $k$ : 
there is more and more space (the $d_i$ are increasing) ;
the intensity of the relevant Poisson point process is the same ; 
the expected number of individuals in the $i^{th}$ generation of the Galton-Watson process decreases.}.  
Optimizing then on the $a_i$'s, we get: 
$$
E(N_k) \lesssim 
\sup_{a_2,\dots,a_{k+1}} \min\left(
\kappa^{k+1}\frac{(1+\rho)^2}{4\rho} \sqrt{\prod_{2 \le i \le k+1}(1-a_i)^2},\frac{\kappa \L(a_2,\dots,a_{k+1})}{2\rho}
\right) ^d.
$$
A precise statement is given in Lemma \ref{l:majorationk}.
The precise value of the threshold $\kappa^c_\rho(k)$ given in \eqref{d:kappack} is then the value such that the above upper bound converges to $0$ when $\kappa<\kappa^c_\rho(k)$. This heuristic will be precised in Subsection \ref {s:subcritical}: we will prove there that when $\kappa < \kappa^c_\rho(k)$, $E(N_k)$ converges to $0$ as $d$ tends to infinity, and this will imply that  there exists no infinite $k$-alternating path.

\medskip
\subsubsection*{An upper bound for $\kappa^c_\rho(k)$}
If, on the contrary, $\kappa > \kappa^c_\rho(k)$ then we will prove that $E(N_k)$ does not converge to $0$.
Actually, to prove that when $\kappa > \kappa^c_\rho(k)$ there exist infinite $k$-alternating path, we will show, in Subsection \ref{s:supercritical}, the following stronger property :
with a probability that converges to $1$ as $d$ tends to infinity, we can find a path which fulfills the
requirements of the definition of $N_k$ -- or more precisely of $\widetilde{N}_k(a_2,\dots,a_{k+1})$ for some $a_2,\dots,a_{k+1}$ nearly optimal -- and which fulfills some extra conditions on the positions of the balls.
This is Proposition \ref{p:onestep} and this is the main technical part of this paper.
Those extra conditions provide independence properties between the existence of different paths of the same kind.
We can then show the existence of many such paths and concatenate some of them to build an infinite $k$-alternating path.
Technically, the last step is achieved by comparing our model with a supercritical oriented percolation process on $\Z^2$.
In this comparison, an open bond in the oriented percolation process corresponds to one of the above paths in our model.
This comparison with oriented percolation was already the last step in the paper of Penrose \cite{Penrose-high-dimensions}.

\medskip
\subsubsection*{From infinite $k$-alternating paths to infinite paths} Recall $\kappa^c_\rho = \inf_{k \ge 1} \kappa^c_\rho(k)$.
With the previous results, 
it is rather easy to show that there is no percolation for $d$ large enough as soon as $\kappa < \kappa^c_\rho$.
When $\kappa>\kappa^c_\rho$ then $\kappa>\kappa^c_\rho(k)$ for a $k \ge 1$.
Therefore there is $k$-alternating percolation and therefore there is percolation.

\subsection{Subcritical phase}
\label{s:subcritical}

Let $\rho>1$ be fixed. We consider, in $\R^d$, the two-type Boolean model $\Sigma$ introduced in Subsection \ref{s:ideas}, with radii $1$ and $\rho$ and respective intensities 
$$
\lambda_1=\frac{\kappa^d}{v_d 2^d} \text{ and } 
\lambda_{\rho}=\frac{\kappa^d}{v_d 2^d \rho^d}
$$
depending on some $\kappa \in (0,1)$.
The aim of this subsection is to prove the following proposition:
\begin{prop} \label{p:subcritical} 
Let $\rho>1$ be fixed. If $\kappa<\kappa^c_\rho$, then, as soon as the dimension $d$ is large enough, percolation does not occur in the two-type Boolean model $\Sigma$.
\end{prop}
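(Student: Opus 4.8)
The plan is to establish that $\Sigma_1$ and $\Sigma_\rho$ are each subcritical for every $d$, to reduce percolation of $\Sigma$ to the existence of an infinite component in an auxiliary graph $H$ on the large balls, to dominate an $H$-cluster by the total progeny of a Galton--Watson process of mean offspring $m=E(N_0)+\sum_{k\ge1}E(N_k)$, and then to check that $m<1$ once $d$ is large. For the subcriticality of $\Sigma_1$, observe that for every $d\ge2$ its normalised intensity equals $\lambda_1 v_d 2^d=\kappa^d<1<\widetilde\lambda^c_d(\delta_1)$, hence $\lambda_1<\lambda^c_d(\delta_1)$; by the scaling relation \eqref{e:scaling} the same holds for $\Sigma_\rho$. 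Next I would introduce the graph $H$ on the point set $\chi_\rho$ in which $x\sim y$ whenever $B(x,\rho)$ and $B(y,\rho)$ either overlap or are joined by a chain of balls of $\Sigma_1$; thus an edge of $H$ out of the origin corresponds exactly to a point counted by $N_0$ or by some $N_k$ with $k\ge1$.

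The role of $H$ is that, since $\Sigma_1$ does not percolate, $\Sigma$ percolates if and only if $H$ has an infinite component. Indeed, if $C$ is an unbounded component of $\Sigma$, then $C$ must contain infinitely many large balls: otherwise the intersection graph of the balls composing $C$ — which is a.s.\ connected, infinite and locally finite — would, by K\"onig's lemma, contain an infinite self-avoiding path, which beyond its last large ball would be an infinite self-avoiding path of balls of $\Sigma_1$, contradicting the subcriticality of $\Sigma_1$. Moreover, splitting any path of $\Sigma$ joining two large balls into its maximal monochromatic runs shows that the large balls traversed in succession are $H$-adjacent; hence all large balls of $C$ lie in a single, necessarily infinite, component of $H$. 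As the converse is clear, it remains to prove that $H$ a.s.\ has no infinite component when $d$ is large.

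To this end I would explore the $H$-cluster of a point of $\chi_\rho$ generation by generation. The large balls newly discovered from an already found large ball $B(z,\rho)$ form a subset of those counted, after translation to $z$, by $N_0$ together with the $N_k$'s; hence, invoking the superposition property of the Poisson processes $\chi_1$ and $\chi_\rho$ and organising the exploration exactly as Penrose does in \cite{Penrose-high-dimensions}, the $H$-cluster is stochastically dominated by the total progeny of a Galton--Watson process whose offspring law is dominated by $N_0+\sum_{k\ge1}N_k$, of mean $m:=E(N_0)+\sum_{k\ge1}E(N_k)$. If $m<1$, the expected $H$-cluster size of a Palm point of $\chi_\rho$ is at most $(1-m)^{-1}<\infty$, so this cluster is a.s.\ finite; since $\chi_\rho$ a.s.\ has only countably many points, $H$ then a.s.\ has no infinite component. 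The hard part here will be making this domination rigorous: distinct radius-$1$ chains issued from explored large balls may geometrically interfere, so the exploration has to be arranged (as in \cite{Penrose-high-dimensions}) so that, conditionally on the past, the number of newly discovered large balls is genuinely dominated by $N_0+\sum_k N_k$.

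Finally I would check that $m<1$ for $d$ large. One has $E(N_0)=\lambda_\rho v_d(2\rho)^d=\kappa^d$. For $k\ge1$, bounding $N_k$ by the number of admissible chains and integrating out the constraints in its definition — using $\lambda_1 v_d 2^d=\kappa^d$ so that all dimensional factors cancel — yields the genealogical bound $E(N_k)\le\big(c\,\kappa^{k+1}\big)^d$ with $c:=(1+\rho)^2/(4\rho)$. Since $\kappa<1$, one can fix $k_0$ with $c\,\kappa^{k+1}\le\tfrac12$ for all $k\ge k_0$, whence $\sum_{k\ge k_0}E(N_k)\le 2^{-d}/(1-\kappa^d)\to0$ as $d\to\infty$; and for each of the finitely many $k$ with $1\le k<k_0$, the hypothesis $\kappa<\kappa^c_\rho=\inf_{j\ge1}\kappa^c_\rho(j)$ gives $\kappa<\kappa^c_\rho(k)$, so Lemma~\ref{l:majorationk} yields $E(N_k)\to0$. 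Summing, $m\to0$, hence $m<1$ for $d$ large; by the reduction established above, $\Sigma$ then does not percolate. The only input here that is not elementary is Lemma~\ref{l:majorationk} for the small values of $k$, whose proof carries the actual analytic work — the optimisation over the intermediate distances $d_i$ that produces the threshold $\kappa^c_\rho(k)$.
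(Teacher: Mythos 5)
Your proposal is correct and follows essentially the same route as the paper: control the large balls reachable from the origin via the first-moment bounds on the $N_k$'s (the crude genealogical bound for the tail in $k$ and Lemma \ref{l:majorationk} for each fixed $k<k_0$), deduce that the cluster of $B(0,\rho)$ contains only finitely many balls of radius $\rho$, and conclude with the subcriticality of $\Sigma_1$ alone. The only difference is packaging: where you invoke a Galton--Watson stochastic domination of the $H$-cluster (and rightly flag it as the delicate point), the paper simply introduces the generations $\xi_k$ of large balls and uses the expectation inequality $E(\#\xi_k)\le\left(E(\#\xi_1)\right)^k$, a pure chain-counting (Mecke-formula) bound which needs no exploration or domination argument in the subcritical direction.
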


In the following of this subsection, we fix $\rho>1$ and  $0<\kappa<\kappa^c_\rho$.

We start with an elementary upper bound, in which we do not take into account the geometrical constraints. We recall that the $N_k$ have been introduced in \eqref{e:Nk}.

\begin{lemma} \label{l:majorationgw}
$E(N_0) = \kappa^d$ and, for $k \ge 1$, 
$\displaystyle 
E(N_k) \le \left(\frac{\kappa^{k+1}(1+\rho)^2}{4\rho}\right)^d
$.
\end{lemma}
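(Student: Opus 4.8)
The plan is to bound $E(N_k)$ by forgetting the geometry entirely and only using the genealogical/branching structure implicit in the definition \eqref{e:Nk}, exactly as sketched in Subsection \ref{s:ideas}. For $N_0$ the computation is immediate: $N_0 = \#\{x_1 \in \chi_\rho : \|x_1\| < 2\rho\}$, so by the intensity of $\chi_\rho$ we get $E(N_0) = \lambda_\rho |B(0,2\rho)| = \lambda_\rho v_d (2\rho)^d = \frac{\kappa^d}{v_d 2^d \rho^d} v_d (2\rho)^d = \kappa^d$.

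For $k \ge 1$, first I would pass from the ``exists distinct $(x_i)$'' event in the definition of $N_k$ to a plain sum over ordered tuples: $N_k$ is at most the number of tuples $(x_1,\dots,x_k) \in \chi_1^k$ (distinct) together with $x_{k+1} \in \chi_\rho$ satisfying the chain of distance constraints $\|x_1\| < 1+\rho$, $\|x_{i+1}-x_i\|<2$ for $1 \le i \le k-1$, and $\|x_{k+1}-x_k\| < 1+\rho$. Then I would apply the multivariate Mecke formula (Campbell--Mecke) for the independent Poisson processes $\chi_1$ and $\chi_\rho$: the expected number of such configurations equals
$$
\lambda_1^k \lambda_\rho \int_{(\R^d)^{k+1}} \mathbf{1}\{\|x_1\|<1+\rho\} \prod_{i=1}^{k-1}\mathbf{1}\{\|x_{i+1}-x_i\|<2\} \, \mathbf{1}\{\|x_{k+1}-x_k\|<1+\rho\} \, dx_1 \cdots dx_{k+1}.
$$
Integrating successively from $x_{k+1}$ backwards (or in any order), each integral is just the volume of a ball: the $x_1$-integral gives $v_d(1+\rho)^d$, each $x_{i+1}$-integral for $i \le k-1$ gives $v_d 2^d$, and the $x_{k+1}$-integral gives $v_d(1+\rho)^d$. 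Hence
$$
E(N_k) \le \lambda_1^k \lambda_\rho \, \big(v_d(1+\rho)^d\big)^2 \big(v_d 2^d\big)^{k-1}.
$$

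Finally I would substitute the values $\lambda_1 = \kappa^d/(v_d 2^d)$ and $\lambda_\rho = \kappa^d/(v_d 2^d \rho^d)$ and simplify. The powers of $v_d$ and of $2^d$ cancel: one gets $E(N_k) \le \kappa^{(k+1)d} (1+\rho)^{2d} / (2^{2d} \rho^d) = \left(\frac{\kappa^{k+1}(1+\rho)^2}{4\rho}\right)^d$, which is the claim. There is no real obstacle here; the only point requiring a line of care is the reduction from the ``$\exists$ distinct $x_i$'' formulation to the sum over ordered tuples, which is a union bound over the at most one witnessing tuple (up to the fact that a single point of $\chi_\rho$ is counted once in $N_k$ while possibly reachable by several paths — so $N_k$ is dominated by, not equal to, the tuple count), and the bookkeeping of which ball radius contributes which volume factor. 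The use of the Mecke formula for a Poisson process is standard and may be invoked directly.
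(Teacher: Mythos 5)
Your proof is correct and follows essentially the same route as the paper: bound $N_k$ by the expected number of ordered tuples via the (multivariate) Mecke formula, integrate the indicator chain to get one factor $v_d(1+\rho)^d$ for the first and last links and $v_d2^d$ for each intermediate link, and substitute the intensities $\lambda_1,\lambda_\rho$, with the $N_0$ case computed directly. Your remark that $N_k$ is only dominated by the tuple count is exactly the inequality the paper uses, so there is nothing to add.
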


\proof
The result for $N_0$ follows directly from the equality 
$
E(N_0) = \lambda_{\rho}|B(0,2\rho)|
$.

Take now $k \ge 1$.
We have :
\begin{equation} \label{e:nkgwinter}
E(N_k) \le \lambda_1|B(\cdot,1+\rho)| \left(\prod_{i=2}^k \lambda_1|B(\cdot,2)|\right) \lambda_{\rho}|B(\cdot,1+\rho)|
\end{equation}
where $B(\cdot,r)$ stands for a ball with radius $r$ and center unspecified.
This can for instance be seen as follows :
\begin{eqnarray*}
E(N_k) 
 & \le & E\left(\sum_{x_1,\dots,x_k \in \chi_1 \text{ distinct}, \;x_{k+1} \in \chi_{\rho}} 1_{x_1 \in B(0,1+\rho)} \dots 1_{x_{k+1} \in B(x_k,1+\rho)}\right) \\
 & = & \lambda_1^k \lambda_{\rho} \int_{\R^{d(k+1)}} dx_1 \dots dx_{k+1} 1_{x_1 \in B(0,1+\rho)} \dots 1_{x_{k+1} \in B(x_k,1+\rho)},
\end{eqnarray*}
which gives \eqref{e:nkgwinter}. The lemma follows.
\finpreuve

\bigskip
To give a more accurate upper bound for the $N_k$'s, we are going to cut the balls into slices and to estimate which slices give the main contribution. 
For $x \in \R^d \setminus \{0\}$, $0 \le a < b \le 1$  and  $r >0$, we now define :
\begin{eqnarray*}
\text{If } a>0 : && B(x,r,a,b) = \left\{ y \in \R^d : \|y-x\| \le r \hbox{ and } ar < \left<y-x, \frac{x}{\|x\|}\right> \le br \right\}, \\
\text{If } a=0 : && B(x,r,a,b) =\left\{ y \in \R^d : \|y-x\| \le r \hbox{ and } \left<y-x,\frac{x}{\|x\|}\right> \le br \right\}.
\end{eqnarray*}
The next lemma gives asymptotics for the volume of these sets:

\begin{lemma} \label{l:gamma} 
For $x \in \R^d \setminus \{0\}$, $0 \le a < b \le 1$  and  $r >0$,
$$
\lim_{d \to + \infty} \frac1d \ln\left(\frac{|B(x,r,a,b)|}{v_d}\right) = \ln\left(r\sqrt{1-a^2}\right).
$$
\end{lemma}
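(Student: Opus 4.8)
The plan is to reduce the volume of the slice to a single one-dimensional integral and then estimate it by elementary Laplace-type bounds. First, using the change of variables $z=(y-x)/r$ (Jacobian $r^{-d}$), the rotation sending $x/\|x\|$ to the first basis vector, and the translation invariance of Lebesgue measure, one checks that $|B(x,r,a,b)| = r^d\,\omega_d(a,b)$, where $\omega_d(a,b)$ denotes the volume of $\{z\in\R^d:\|z\|\le 1,\ a<z_1\le b\}$. Slicing this set at height $z_1=t$, the section is a $(d-1)$-dimensional ball of radius $\sqrt{1-t^2}$, so Fubini gives
$$
\omega_d(a,b)=v_{d-1}\int_{I_a}(1-t^2)^{(d-1)/2}\,dt,\qquad I_a=(a,b]\ \text{ if }a>0,\quad I_a=[-1,b]\ \text{ if }a=0.
$$
Taking $a=0$, $b=1$ yields in particular $v_d=v_{d-1}\int_{-1}^1(1-t^2)^{(d-1)/2}\,dt$, whence
$$
\frac{|B(x,r,a,b)|}{v_d}=r^d\,\frac{\int_{I_a}(1-t^2)^{(d-1)/2}\,dt}{\int_{-1}^1(1-t^2)^{(d-1)/2}\,dt}.
$$

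Next I would bound the numerator and the denominator. Since $0\le a<b\le 1$ and $t\mapsto 1-t^2$ is non-increasing on $[0,1]$, on $I_a$ the integrand is at most its value at $t=a$ (when $a>0$), resp. at $t=0$ (when $a=0$), which in both cases equals $(1-a^2)^{(d-1)/2}$; as $|I_a|\le 2$ this gives $\int_{I_a}\le 2(1-a^2)^{(d-1)/2}$. For a matching lower bound, fix $\epsilon>0$ small enough that $a+\epsilon<\min(b,1)$: on the subinterval $[a,a+\epsilon]\subset\overline{I_a}$ the integrand is at least $(1-(a+\epsilon)^2)^{(d-1)/2}$, so $\int_{I_a}\ge\epsilon(1-(a+\epsilon)^2)^{(d-1)/2}$. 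For the denominator, one has trivially $\int_{-1}^1(1-t^2)^{(d-1)/2}\,dt\le 2$, while restricting the integral to $[-1/\sqrt d,1/\sqrt d]$ gives the lower bound $\tfrac{2}{\sqrt d}(1-1/d)^{(d-1)/2}\ge c/\sqrt d$ for $d$ large, with $c>0$ a constant.

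Finally, combining these four bounds, taking $\frac1d\ln$, and using that $\frac1d\ln(\text{polynomial in }d)\to 0$ and $\frac{d-1}{2d}\to\frac12$, one gets
$$
\tfrac12\ln\bigl(1-(a+\epsilon)^2\bigr)+o(1)\ \le\ \frac1d\ln\frac{|B(x,r,a,b)|}{v_d}-\ln r\ \le\ \tfrac12\ln(1-a^2)+o(1).
$$
Letting $d\to\infty$ and then $\epsilon\to 0$ gives $\lim_d\frac1d\ln(|B(x,r,a,b)|/v_d)=\ln r+\tfrac12\ln(1-a^2)=\ln(r\sqrt{1-a^2})$, as claimed. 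The argument is entirely elementary; the only point requiring a little care is the case $a=0$, where $I_a$ extends down to $-1$ and the maximum of the integrand is attained at the interior point $t=0$ rather than at an endpoint — but its value there is still $1=(1-a^2)^{(d-1)/2}$, so the exponential rate is unchanged. I do not expect any genuine obstacle here: this is a routine Laplace-asymptotics estimate for a single integral.
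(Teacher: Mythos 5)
Your proof is correct: the reduction $|B(x,r,a,b)|=r^d\omega_d(a,b)$, the Fubini formula $\omega_d(a,b)=v_{d-1}\int_{I_a}(1-t^2)^{(d-1)/2}dt$, and the four Laplace-type bounds are all valid, and your handling of the case $a=0$ and of the requirement $a+\epsilon<\min(b,1)$ (possible since $a<b\le 1$) is careful enough. The route is, however, somewhat different from the paper's. The paper also reduces to $x=e_1$, $r=1$, but then bounds the slice purely geometrically: it is contained in the cylinder $\{x_1\in[a,1],\ \|(0,x_2,\dots,x_d)\|\le\sqrt{1-a^2}\}$, giving $|B(e_1,1,a,b)|\le v_{d-1}(1-a^2)^{(d-1)/2}(1-a)$, and by convexity it contains a difference of homothetic cones, giving $|B(e_1,1,a,b)|\ge \frac{v_{d-1}}{d}(1-a^2)^{(d-1)/2}(b-a)$; both bounds already carry the exact exponential rate $\sqrt{1-a^2}$, so no auxiliary $\epsilon$ and no final limit in $\epsilon$ are needed, at the price of (implicitly) using that $\frac1d\ln(v_{d-1}/v_d)\to 0$. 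Your version trades these features the other way: by normalizing with $v_d=v_{d-1}\int_{-1}^1(1-t^2)^{(d-1)/2}dt$ the factor $v_{d-1}$ cancels, so you never need any asymptotics for the ratio $v_{d-1}/v_d$ (your $c/\sqrt d$ lower bound on the denominator does that work), but your one-sided lower bound on the numerator forces the extra parameter $\epsilon$ and the limit $\epsilon\to 0$ at the end. Both arguments are elementary and of comparable length; yours is slightly more self-contained, the paper's slightly sharper (it exhibits the correct polynomial prefactors directly).
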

Actually we will only use 
:
$$
\limsup_{d \to + \infty} \frac1d \ln\left(\frac{|B(x,r,a,b)|}{v_d}\right) \le \ln\left(r\sqrt{1-a^2}\right).
$$
\proofof{Lemma \ref{l:gamma}}	
Note that it is sufficient to prove the lemma for $x=e_1$, first vector of the canonical basis, and $r=1$.

First, if  $a=0$, the result follows directly from the inequality $v_d/2 \le |B(e_1,1,0,b)| \le v_d$.

Assume next that $a>0$. On the one hand, $B(e_1,1,a,b)$ is included in the cylinder
$$
\{x=(x_1,\dots,x_d) : x_1 \in [a,1] \hbox{ and } \|(0,x_2,\dots,x_d)\| \le \sqrt{1-a^2}\},
$$
which implies
\begin{equation}\label{e:gammapreuve1}
|B(e_1,1,a,b)| \le v_{d-1} \sqrt{1-a^2}^{d-1} (1-a).
\end{equation}
On the other end, by convexity, $B(e_1,1,a,b)$ contains the following difference between two homothetical cones:
$$
\left\{x=(x_1,\dots,x_d) : x_1 \in [a,b] \hbox{ and } \|(0,x_2,\dots,x_d)\| \le \sqrt{1-a^2} \frac{1-x_1}{1-a}\right\},
$$
which implies
\begin{equation}\label{e:gammapreuve2}
\frac{v_{d-1} \sqrt{1-a^2}^{d-1}}d \big((1-a)-(1-b)\big) \le |B(e_1,1,a,b)|.
\end{equation}
The lemma follows from \eqref{e:gammapreuve1} and \eqref{e:gammapreuve2}. \finpreuve

\bigskip
We can now improve the control given in Lemma \ref{l:majorationgw}:
\begin{lemma} \label{l:majorationk}
For every $k \ge 1$, 
\begin{eqnarray*}
& & \limsup_{d \to \infty}\frac1d \ln(E(N_k))  \\
 & \le & \ln\left(
\sup_{0 \le a_2 ,\dots, a_{k+1} < 1 } \min\left(
\kappa^{k+1}\frac{(1+\rho)^2}{4\rho} \sqrt{\prod_{2 \le i \le k+1}(1-a_i^2)},\frac{\kappa \L(a_2,\dots,a_{k+1})}{2\rho}
\right) \right)  
\end{eqnarray*}
\end{lemma}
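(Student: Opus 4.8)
The plan is to sharpen the purely genealogical estimate of Lemma~\ref{l:majorationgw} by cutting each ball of a path into radial slices, exactly as sketched in Subsection~\ref{s:ideas}, and then, separately for each way of slicing, to play the resulting genealogical bound against a geometric one coming from the confinement of the endpoint $x_{k+1}$ to a ball $B(0,\L(\cdot))$. Concretely, fix $\epsilon>0$ and, for $i\in\{2,\dots,k+1\}$ and $j_i\in\{1,\dots,\lceil1/\epsilon\rceil\}$, set $a_i^-=(j_i-1)\epsilon$ and $a_i^+=\min(j_i\epsilon,1)$, so that the slices $B(x_{i-1},r_i,a_i^-,a_i^+)$ partition $B(x_{i-1},r_i)$ (the slice $j_i=1$, namely $B(x_{i-1},r_i,0,\epsilon)$, also collecting the nonpositive radial projections), where $x_0=0$, $r_1=r_{k+1}=1+\rho$ and $r_i=2$ for $2\le i\le k$. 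Call a path $x_1,\dots,x_{k+1}$ as in the definition of $N_k$ of \emph{type} $J=(j_2,\dots,j_{k+1})$ if $x_i-x_{i-1}\in B(x_{i-1},r_i,a_i^-,a_i^+)$ for all $i\ge2$; then $N_k\le\sum_J N_k^{(J)}$, where $N_k^{(J)}$ counts the $x_{k+1}\in\chi_\rho$ that are the endpoint of at least one path of type $J$. The point of this decomposition is the following elementary remark: from
$$
\|x_i\|^2=\|x_{i-1}\|^2+2\langle x_{i-1},x_i-x_{i-1}\rangle+\|x_i-x_{i-1}\|^2,\qquad\langle x_{i-1},x_i-x_{i-1}\rangle\le a_i^+r_i\|x_{i-1}\|,\qquad\|x_i-x_{i-1}\|\le r_i,
$$
together with the fact that $t\mapsto t^2+2a_i^+r_it+r_i^2$ is increasing on $[0,\infty)$, one deduces by induction on $i$, starting from $\|x_1\|<1+\rho=d_1$, that a path of type $J$ satisfies $\|x_i\|\le d_i$ for every $i$, where $(d_i)_{1\le i\le k+1}$ is built from $(a_2^+,\dots,a_{k+1}^+)$ by the recursion defining $\L$; in particular $\|x_{k+1}\|\le\L(a_2^+,\dots,a_{k+1}^+)$.

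This gives two bounds on $E(N_k^{(J)})$. On the genealogical side, bounding $N_k^{(J)}$ by the number of admissible tuples and running verbatim the Campbell--Mecke computation from the proof of Lemma~\ref{l:majorationgw}, but with each ball $B(\cdot,2)$ and the final ball $B(\cdot,1+\rho)$ replaced by the corresponding slice and with the slice volumes estimated via Lemma~\ref{l:gamma} (only the $\limsup$ part is needed, uniformly over the finitely many values of the $a_i^-$), one obtains the same quantity as in Lemma~\ref{l:majorationgw} times $\prod_{2\le i\le k+1}(\sqrt{1-(a_i^-)^2}\,)^d e^{o(d)}$, whence
$$
\limsup_{d\to\infty}\tfrac1d\ln E\bigl(N_k^{(J)}\bigr)\le\ln\!\left(\kappa^{k+1}\frac{(1+\rho)^2}{4\rho}\sqrt{\textstyle\prod_{2\le i\le k+1}\bigl(1-(a_i^-)^2\bigr)}\,\right).
$$
On the geometric side, the previous paragraph gives $N_k^{(J)}\le\#\{x_{k+1}\in\chi_\rho:\|x_{k+1}\|\le\L(a_2^+,\dots,a_{k+1}^+)\}$, a Poisson variable of mean $\lambda_\rho v_d\,\L(a_2^+,\dots,a_{k+1}^+)^d=\bigl(\kappa\,\L(a_2^+,\dots,a_{k+1}^+)/(2\rho)\bigr)^d$. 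Taking the minimum of these two bounds and summing over the $\lceil1/\epsilon\rceil^{k}$ types (a factor subexponential in $d$), one gets
$$
\limsup_{d\to\infty}\tfrac1d\ln E(N_k)\le\max_{J}\ln\min\!\left(\kappa^{k+1}\frac{(1+\rho)^2}{4\rho}\sqrt{\textstyle\prod_i\bigl(1-(a_i^-)^2\bigr)}\,,\ \frac{\kappa\,\L(a_2^+,\dots,a_{k+1}^+)}{2\rho}\right).
$$

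It then remains to let $\epsilon\to0$. Because $\L=d_{k+1}$ depends continuously on $(a_2,\dots,a_{k+1})$ — it is built from them by an explicit recursion of continuous maps and extends continuously to $[0,1]^k$ — and because $a_i^+-a_i^-\le\epsilon$, the right-hand side above converges, as $\epsilon\to0$, to the logarithm of the supremum in the statement. That supremum is in fact a maximum, attained on a compact part of $[0,1)^k$: its value is at least $\min\bigl(\kappa^{k+1}(1+\rho)^2/(4\rho),\ \kappa\,\L(0,\dots,0)/(2\rho)\bigr)>0$, while the first argument of the $\min$, hence its logarithm, tends to $-\infty$ as any $a_i\to1$. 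Since the left-hand side does not depend on $\epsilon$, the lemma follows.

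The step I expect to require the most care is the interplay, within a fixed type $J$, between the two endpoints of each slice: one must use the \emph{upper} values $a_i^+$ for the geometric confinement $\|x_i\|\le d_i$ (since $d_i$ increases with $a_i$) but the \emph{lower} values $a_i^-$ for the volume estimate $\sqrt{1-(a_i^-)^2}$ (since $a\mapsto\sqrt{1-a^2}$ decreases), and then check that this $O(\epsilon)$ mismatch disappears in the limit. Everything else is a careful rerun of Lemmas~\ref{l:majorationgw} and~\ref{l:gamma}.
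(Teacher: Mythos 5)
Your proposal is correct and follows essentially the same route as the paper: the paper also partitions each ball into finitely many radial slices of width $1/N$, bounds each sliced count both genealogically (via Lemma \ref{l:gamma}, with the lower endpoints $a_i$) and geometrically (confinement in $B(0,\L(a_2^+,\dots,a_{k+1}^+))$, with the upper endpoints), takes the min, sums over the polynomially many types, and removes the $O(1/N)$ mismatch by uniform continuity of $\L$ as $N\to\infty$. The only cosmetic difference is your explicit inductive verification of $\|x_i\|\le d_i$, which the paper leaves as "one can check".
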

\proof
$\bullet$
Fix $N \ge 1$.
Note that the ball $B(x,r)$ is the disjoint union of the slices  $B(x,r,n/N,(n+1)/N)$ for $n \in \{0,\dots,N-1\}$.
For any $n_2,\dots,n_{k+1} \in \{0,\dots,N-1\}$, we set 
$$
a_i=\frac{n_i}{N} \text{ and } a_i^+=\frac{n_i+1}{N}.
$$
We focus on the contribution of a specific product of slices :
$$
N_k(n_2,\dots,n_{k+1})=\#\left\{
\begin{array}{l}
 x_{k+1} \in \chi_\rho: \exists (x_i)_{1 \le i \le k} \in \chi_1 \text{ distinct with} \\
\|x_1\| < 1+\rho, \; \forall i \in \{1, \dots, k-1\} \, x_{i+1} \in B(x_i,2,a_{i+1},a_{i+1}^+), \\ 
x_{k+1} \in  B(x_k,1+\rho,a_{k+1},a_{k+1}^+)
\end{array}
\right\}.
$$
Then we have :
\begin{equation} \label{e:nnk}
N_k \le \sum N_k(n_2,\dots,n_{k+1}),
\end{equation}
where the sum is over $(n_2,\dots,n_{k+1}) \in \{0,\dots,N-1\}^k$.

$\bullet$ As we can check that the points contributing to $N_k(n_2,\dots,n_{k+1})$ are in $B(0,\L(a_2^+,\dots,a_{k+1}^+))$, we get :
$$
E(N_k(n_2,\dots,n_{k+1})) \le \lambda_{\rho}v_d \L(a_2^+,\dots,a_{k+1}^+)^d,
$$
this leads to :
\begin{equation} \label{e:nkgeometrie}
\limsup_{ d \to +\infty} \frac1d \ln\left( E(N_k(n_2,\dots,n_{k+1}))\right) \le 
\ln\left(\frac{\kappa \L(a_2^+,\dots,a_{k+1}^+)}{2\rho}\right).
\end{equation}

$\bullet$ Besides, proceeding as in the proof of Lemma \ref{l:majorationgw}, we obtain :
$$
E(N_k(n_2,\dots,n_{k+1})) \le \lambda_1|B(0,1+\rho)| \left(\prod_{i=2}^k \lambda_1|B(\cdot,2,a_i,a_i^+)|\right) \lambda_{\rho}|B(\cdot,1+\rho,a_{k+1},a_{k+1}^+)|.
$$
With Lemma \ref{l:gamma}, we deduce :
\begin{equation} \label{e:nkgw}
\limsup_{d\to\infty} \frac1d \ln E( N_k(n_2,\dots,n_{k+1})) 
\le \ln\left(\kappa^{k+1}\frac{(1+\rho)^2}{4\rho} \sqrt{\prod_{2 \le i \le k+1}(1-a_i^2)} \right). 
\end{equation}

$\bullet$ 
From \eqref{e:nnk}, \eqref{e:nkgeometrie} and \eqref{e:nkgw} we finally get :
\begin{eqnarray*}
&& \limsup_{d \to +\infty}\frac{\ln(E(N_k))}{d}  \\
& \le & 
\ln\left(
\max_{a_2 ,\dots, a_{k+1}\in \{0,\dots,\frac{N-1}N\}}\min\left(
\kappa^{k+1}\frac{(1+\rho)^2}{4\rho} \sqrt{\prod_{2 \le i \le k+1}(1-a_i^2)},\frac{\kappa \L(a_2^+,\dots,a_{k+1}^+)}{2\rho}
\right)
\right) .
\end{eqnarray*}
As $\L$ is uniformly continuous on $[0,1]^k$, we end the proof by taking the limit when $N$ goes to $+\infty$. \finpreuve

\bigskip
The next step consists in taking into account all $k\ge 0$ simultaneously; we thus introduce

\begin{equation} \label{e:N}
N=\#\left(\left\{
\begin{array}{l}
 y \in \chi_\rho: \exists k\ge 1, \exists (x_i)_{1 \le i \le k} \in \chi_1 \text{ distinct with} \\
\|x_1\| < 1+\rho, \; \forall i \in \{1, \dots, k-1\} \, \|x_{i+1}-x_i\| < 2, \\ \|y-x_k\| < 1+\rho
\end{array}
\right\} \cup \{y \in \chi_\rho : \|y\| < 2\rho\}\right).
\end{equation}

\begin{lemma} \label{l:majoration}
If $\kappa < \kappa^c_\rho$, then 
$\displaystyle
\limsup_{d \to +\infty} \frac1d \ln(E(N)) < 0
$.
\end{lemma}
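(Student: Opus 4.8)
The strategy is to dominate $N$ by a sum over $k$ of the quantities already estimated and then control this sum, splitting it into a finite initial range and an infinite tail. First I would observe the elementary bound $N \le N_0 + \sum_{k\ge 1} N_k$: indeed the first set in the definition \eqref{e:N} of $N$ is the union over $k\ge 1$ of the sets whose cardinalities are the $N_k$, and the second set is precisely the one counted by $N_0$. Taking expectations and recalling from Lemma \ref{l:majorationgw} that $E(N_0)=\kappa^d$ with $\kappa<1$, it suffices to prove that $\sum_{k\ge 1}E(N_k)$ decays exponentially in $d$. For the tail I would use the crude bound of Lemma \ref{l:majorationgw}, namely $E(N_k)\le (c\kappa^{k+1})^d$ with $c=\frac{(1+\rho)^2}{4\rho}>1$: since $0<\kappa<1$ one can fix $k_0\ge 1$ with $c\kappa^{k_0+1}<1$, and then $\sum_{k\ge k_0}E(N_k)\le \sum_{k\ge k_0}(c\kappa^{k+1})^d = \frac{(c\kappa^{k_0+1})^d}{1-\kappa^d}\le \frac{(c\kappa^{k_0+1})^d}{1-\kappa}$, which is exponentially small.

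It then remains to treat the finitely many terms $1\le k\le k_0-1$, and here I would invoke Lemma \ref{l:majorationk}. The point is that, writing $f(a)=\kappa^{k+1}\frac{(1+\rho)^2}{4\rho}\sqrt{\prod_{2\le i\le k+1}(1-a_i^2)}$ and $g(a)=\frac{\kappa}{2\rho}\L(a_2,\dots,a_{k+1})$, the hypothesis $\kappa<\kappa^c_\rho\le\kappa^c_\rho(k)$ forces $\sup_a\min(f(a),g(a))<1$, so that Lemma \ref{l:majorationk} gives $\limsup_d\frac1d\ln E(N_k)<0$. Pointwise this is immediate: if $\min(f(a),g(a))\ge 1$ then both $f(a)\ge 1$ and $g(a)\ge 1$, which rearrange to $\kappa\ge\big(\frac{4\rho}{(1+\rho)^2\sqrt{\prod(1-a_i^2)}}\big)^{1/(k+1)}$ and $\kappa\ge\frac{2\rho}{\L(a)}$, hence $\kappa\ge\max(\cdots)\ge\kappa^c_\rho(k)$ by \eqref{d:kappack}, contradicting $\kappa<\kappa^c_\rho(k)$. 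To turn this into a uniform bound I would reduce to a compact parameter set: since $f(a)\le c\kappa^{k+1}\sqrt{1-(\max_i a_i)^2}$, one can pick $\delta>0$ with $f<1$ (hence $\min(f,g)<1$) on $\{\max_i a_i>1-\delta\}$, while on the compact box $[0,1-\delta]^k$ the continuous function $\min(f,g)$ attains a maximum, which is $<1$ by the pointwise bound. Since there are only finitely many such $k$, one takes the maximum of the (negative) exponential rates and of the dimension thresholds, and the contribution of this range is again exponentially small.

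Putting the three pieces together, for $d$ large one gets $E(N)\le C\eta^d$ with $\eta=\max(\kappa,\, s,\, c\kappa^{k_0+1})<1$, where $s<1$ controls the finite range, and therefore $\limsup_{d\to\infty}\frac1d\ln E(N)\le\ln\eta<0$. \textbf{Main obstacle.} Everything is bookkeeping with Lemmas \ref{l:majorationgw} and \ref{l:majorationk} except the passage from the pointwise inequality $\min(f,g)<1$ to the uniform inequality $\sup_a\min(f,g)<1$; this is the only genuinely non-formal step and it hinges on the decay of $f$ as some $a_i\to 1$, which allows restriction to a compact set.
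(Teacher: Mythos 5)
Your proof is correct and follows essentially the same route as the paper: the domination $E(N)\le\sum_{k\ge 0}E(N_k)$, Lemma \ref{l:majorationk} for each fixed $k$, and the crude bound of Lemma \ref{l:majorationgw} together with $\kappa<1$ to sum the tail $k\ge k_0$. The only difference is that you make explicit, via the decay of the genealogical term as some $a_i\to1$ plus compactness, why $\kappa<\kappa^c_\rho(k)$ forces the supremum appearing in Lemma \ref{l:majorationk} to be strictly below $1$ — a uniformity point the paper uses implicitly.
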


\proof
We have :
\begin{equation}\label{e:majEN}
E(N) \le \sum_{k \ge 0} E(N_k).
\end{equation}
As $\kappa < \kappa^c_\rho$, Lemma \ref{l:majorationk} ensures that for every $k \ge 1$ :
\begin{equation} \label{e:kfixe}
\limsup_{d\to+\infty} \frac1d \ln(E(N_k)) < 0.
\end{equation}
Moreover, the assumption $\kappa < \kappa^c_\rho$ also implies, thanks to Lemma \ref{l:kc2}, that $\kappa < 1$.
We can then choose $k_0$ large enough to have :
$$
\frac{\kappa^{k_0+1}(1+\rho)^2}{4\rho} \le \exp(-1).
$$
With Lemma \ref{l:majorationgw}, we thus get :
$$
E(N_0) + \sum_{k \ge k_0} E(N_k) \le \kappa^d + \exp(-d)\sum_{k \ge 0} \kappa^{kd} = \kappa^d + \exp(-d)\frac{1}{1-\kappa^d}.
$$
With \eqref{e:majEN} and \eqref{e:kfixe}, this ends the proof. \finpreuve

\bigskip
The next lemma is elementary

\begin{lemma} \label{l:chi1nonperco} 
Assume $\kappa<1$. \\
Then the connected components of $\displaystyle \bigcup_{x \in \chi_1} B(x,1)$ are bounded with probability $1$.
\end{lemma}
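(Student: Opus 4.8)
The plan is to identify $\bigcup_{x\in\chi_1}B(x,1)=\Sigma_1$ with the Boolean model $\Sigma(\lambda_1\delta_1)$ of constant radius $1$ driven by the single Poisson process $\chi_1$, and to check that its intensity $\lambda_1=\kappa^d/(v_d2^d)$ is strictly subcritical. First I would translate $\lambda_1$ into the normalised scale: since $\int v_d(2r)^d\,\delta_1(dr)=v_d2^d$, the normalised intensity corresponding to $\lambda_1$ is exactly $\lambda_1v_d2^d=\kappa^d$.

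Then I would invoke the elementary lower bound $\widetilde\lambda^c_d(\delta_1)>1$ recalled right after Theorem \ref{t:penrose}. Combined with $\kappa<1$, hence $\kappa^d<1$, this gives
$$\lambda_1v_d2^d=\kappa^d<1<\widetilde\lambda^c_d(\delta_1)=\lambda^c_d(\delta_1)\,v_d2^d,$$
so $\lambda_1<\lambda^c_d(\delta_1)$. By the definition of the critical intensity (together with the standard monotone coupling in $\lambda$), $\Sigma(\lambda_1\delta_1)$ then does not percolate, and by the equivalence stated in the introduction — a consequence of the ergodicity of the stationary Poisson process — it has, with probability $1$, no unbounded connected component. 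This is exactly the assertion of the lemma.

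There is no real obstacle here; the statement is elementary, and the only point requiring a little care is to correctly match $\lambda_1$ with the normalisation convention used to define $\widetilde\lambda^c_d$. If one preferred an argument not citing $\widetilde\lambda^c_d(\delta_1)>1$, the same conclusion follows directly from a Galton--Watson domination: by the Mecke equation, under the Palm measure at a point of $\chi_1$ the number of other balls of $\Sigma_1$ meeting a given unit ball is Poisson with mean $\lambda_1v_d2^d=\kappa^d<1$, so the family of balls connected to a fixed ball is stochastically dominated by the total progeny of a subcritical Galton--Watson process and is therefore a.s.\ finite; since every connected component of $\Sigma_1$ is a finite union of such balls, it is bounded.
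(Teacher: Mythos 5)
Your argument is correct, but it takes a different route from the paper's own proof. You reduce the lemma to the observation that $\Sigma_1$ is the single-type Boolean model $\Sigma(\lambda_1\delta_1)$ with normalised intensity $\lambda_1 v_d 2^d=\kappa^d<1\le\widetilde\lambda^c_d(\delta_1)$, so $\lambda_1<\lambda^c_d(\delta_1)$ and subcriticality (plus the ergodicity-based equivalence between non-percolation and the a.s.\ absence of unbounded components) gives the claim; your second variant makes the underlying mechanism explicit via the Galton--Watson domination with offspring mean $\kappa^d<1$. The paper instead gives a short self-contained first-moment computation in the spirit of Lemma \ref{l:majorationgw}: it bounds the expected number $M_k$ of radius-$1$ balls linked to $B(0,1)$ by a chain of $k$ distinct radius-$1$ balls by $E(M_k)\le\kappa^{d(k+1)}$, sums the geometric series to get $E(M)\le\kappa^d/(1-\kappa^d)<\infty$, concludes that the components meeting $B(0,1)$ are a.s.\ bounded, and then uses stationarity to cover all components. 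The trade-off: your route is shorter and conceptually transparent, but it leans on the lower bound $\widetilde\lambda^c_d(\delta_1)>1$ (only sketched in the introduction, via exactly the Galton--Watson comparison you invoke) or on a stochastic-domination statement that itself needs a small argument; the paper's chain-counting bound avoids any domination or external input and uses only the Poisson moment formula already deployed in Lemma \ref{l:majorationgw}, which keeps the subcritical section self-contained. Both are valid; if you keep your version, you should either cite Meester--Roy for $\widetilde\lambda^c_d(\delta_1)\ge1$ (or for the branching-process domination) or include the one-line expectation computation that the paper uses.
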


\proof
For any integer $k \ge 0$, denote by $M_k$ the number of balls with radius $1$ linked to $B(0,1)$ by a chain of $k$ distinct balls with radius $1$.
Proceeding as in the proof of Lemma \ref{l:majorationgw}, we get :
$$
E(M_k) \le \kappa^{d(k+1)}.
$$
Now denote by $M$ the number of balls with radius  $1$ linked to  $B(0,1)$ by a chain of  (perhaps no) balls with radius $1$.
Then :
$$
E(M) \le E\left(\sum_{k \ge 0} M_k\right) =\frac{\kappa^d}{1-\kappa^d} < +\infty.
$$
Therefore, $M$ is finite with probability $1$. So the connected components that touch $B(0,1)$ are bounded with probability $1$. So with probability $1$, every connected component is bounded.  
\finpreuve

\proofof{Proposition \ref{p:subcritical}}
Remember that we proved in Lemma \ref{l:kc2} that $\kappa^c_\rho<1$.
Take $\kappa$ such that $0 < \kappa < \kappa^c_\rho<1$. 

Let $\xi_1$ be the set of random balls with radius $\rho$ that can be connected to $B(0,\rho)$ through a chain of random balls with radius $1$
(we consider the condition as fulfilled if the ball touches $B(0,\rho)$ directly).
Let $\xi_2$ be the set of random balls with radius $\rho$ that are not in~$\xi_1$, 
but that can be connected to $B(0,\rho)$ through a path of random balls in which there is only one ball with radius $\rho$.
We define similarly $\xi_3, \xi_4$ and so on and denote by $\xi$ the disjoint union of all these sets.

We have $\# \xi_1 = N$. (Remember that $N$ has been defined in \eqref{e:N}.)
By Lemma \ref{l:majoration}, we have :
$$
\limsup_{d \to +\infty} \frac1d \ln(E( \xi_1)) < 0.
$$
Take some $\mu>0$ and assume from now on that $d$ is large enough to have
$$
\frac1d \ln(E(\# \xi_1)) \le -\mu.
$$ For every $k \ge 1$, we have then:
$
E(\# \xi_k) \le \left(E(\# \xi_1)\right)^k \le \exp(-dk \mu) 
$.

As $\xi = \cup_{k \ge 1} \xi_k$, we deduce from the previous inequalities that $\xi$ is finite with probability $1$. 
So if an unbounded connected component of 
 $\Sigma_1 \cup \Sigma_{\rho}$ touches $B(0,\rho)$ then there is an unbounded component in $\Sigma_1$. 
As $\kappa<1$, Lemma \ref{l:chi1nonperco} rules out the possibility of an unbounded connected component in $\Sigma_1$.
So with probability $1$, the connected components of $\Sigma_1 \cup \Sigma_{\rho}$ that touch $B(0,\rho)$ are bounded, which ends the proof. \finpreuve

\subsection{Supercritical phase}
\label{s:supercritical}

We fix here $\rho>1$. We consider once again the two-type Boolean model $\Sigma$ introduced in Subsection \ref{s:ideas} and  we fix an integer $k \ge 1$.

For every $n \ge 0$, we set $R_n=\rho$ if $k+1$ divides $n$ and $R_n=1$ otherwise.
We say that percolation by $k$-alternation occurs if there exists an infinite sequence  of distinct points $(x_n)_{n \in \N}$ in $\R^d$  such that, for every $n\ge 0$: 
\begin{itemize}
\item $x_n \in \chi_{R_n}$.
\item $B(x_n,R_n) \cap B(x_{n+1},R_{n+1}) \neq  \emptyset$.
\end{itemize}
In other words, percolation by $k$-alternation occurs if there exists an infinite path along which $k$ balls of radius $1$ alternate with one ball of radius $\rho$, \emph{ie} if there exists an infinite $k$-alternating path.
The aim of this subsection is to prove the following proposition:

\begin{prop} \label{p:supercritical}
Let $\rho>1$ and $k \ge 1$ be fixed. Assume that $\kappa \in (\kappa^c_{\rho}(k),1)$.
If the dimension $d$ is large enough, then percolation by $k$-alternation occurs with probability one. 
\end{prop}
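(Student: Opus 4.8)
The plan is to obtain Proposition~\ref{p:supercritical} from a local \emph{one-step} estimate, stated below as Proposition~\ref{p:onestep}, via a renormalisation argument comparing the two-type Boolean model with a supercritical oriented percolation on $\Z^2$ -- the last step being the same as in Penrose~\cite{Penrose-high-dimensions}. First, fix $\rho>1$, $k\ge1$ and $\kappa\in(\kappa^c_{\rho}(k),1)$. Since $\kappa>\kappa^c_{\rho}(k)=\inf_{0\le a_2,\dots,a_{k+1}<1}\max(\cdot,\cdot)$ in \eqref{d:kappack}, there exist parameters $a_2,\dots,a_{k+1}\in[0,1)$ and a real $\eta>0$ such that
$$
\kappa^{k+1}\,\frac{(1+\rho)^2}{4\rho}\,\sqrt{\prod_{2\le i\le k+1}(1-a_i^2)}\ \ge\ 1+\eta
\qquad\text{and}\qquad
\frac{\kappa\,\L(a_2,\dots,a_{k+1})}{2\rho}\ \ge\ 1+\eta .
$$
We keep these fixed. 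As in Subsection~\ref{s:ideas}, with $r_1=r_{k+1}=1+\rho$, $r_i=2$ for $2\le i\le k$ and $d_1=1+\rho$, $d_i^2=d_{i-1}^2+2r_ia_id_{i-1}+r_i^2$, the $a_i$'s single out, around any fixed direction $e$, a stack of thin slabs perpendicular to $e$, the slab of index $i$ being at signed distance $\approx d_i$; we only ever build chains $x_0,x_1,\dots,x_{k+1}$ of balls of radii $\rho,1,\dots,1,\rho$ with $x_0$ in the starting slab, $\|x_i-x_{i-1}\|<r_i$, and $x_i$ confined to the $i$-th slab. Such a chain necessarily progresses by an amount of order $\L=\L(a_2,\dots,a_{k+1})$ along $e$ while staying inside a cylinder of bounded radius. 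The first inequality makes the expected number of such slab-confined chains grow like $(1+\eta)^d$ (running the volume computation behind Lemma~\ref{l:majorationk} as a lower bound, using the lower estimate in Lemma~\ref{l:gamma}); the second makes the geometry consistent -- in particular it guarantees that the exponentially sparse process $\chi_\rho$ typically has points in the target slab, since $\lambda_\rho\, v_d\,\L^d=\left(\frac{\kappa\L}{2\rho}\right)^d$.

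Next, the one-step proposition. I would prove (this is Proposition~\ref{p:onestep}, the main technical point) that, with probability tending to $1$ as $d\to\infty$, there is a slab-confined $k$-alternating chain as above joining a ball $B(x_0,\rho)$ with $x_0$ in a prescribed starting region to a ball $B(x_{k+1},\rho)$ with $x_{k+1}$ in a prescribed target region, all of its balls lying in a prescribed bounded cylinder. The proof is a second-moment (Paley--Zygmund) argument. Let $X$ be the number of such chains. The first-moment computation just described gives $\liminf_d \frac1d\ln E(X)\ge\ln(1+\eta)>0$. The crux is to show $E(X^2)\le(1+o(1))E(X)^2$: the cross terms in $E(X^2)$ are indexed by pairs of chains sharing $j$ of their $k+1$ moving centres, $0\le j\le k+1$, and each such coincidence replaces an integration over $\R^d$ against a factor $\lambda\, v_d\, r^d$ (bounded by a fixed constant to the power $d$) by a factor $O(1)$ while deleting an entire independent sub-branch; a volume bookkeeping shows this makes the $j\ge1$ contributions exponentially smaller than $E(X)^2$, so $E(X^2)=(1+o(1))E(X)^2$ and hence $P(X\ge1)\ge E(X)^2/E(X^2)\to1$. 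The bounded-cylinder confinement -- which is exactly the ``extra conditions on the positions of the balls'' announced in Subsection~\ref{s:ideas} -- is arranged along the way and costs only a subexponential factor.

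Finally, the comparison with oriented percolation. Lay out in $\R^d$ a family of translates of the starting region, target region and cylinder, indexed by the edges of $\Z^2$, so that the cylinder attached to an edge $e$ meets only the cylinders of a bounded number of other edges, and the last ($\rho$-)ball of the chain for an edge sits in the starting region of the chains for the outgoing edges; declare $e$ \emph{open} when the one-step chain for $e$ exists inside its cylinder. By Proposition~\ref{p:onestep} each edge is open with probability $q_d\to1$, and because cylinders of far-apart edges are disjoint and $\chi_1,\chi_\rho$ are Poisson, the field $(\mathbf{1}\{e\ \text{open}\})_e$ is $m$-dependent for a fixed $m$; a standard stochastic-domination argument for finitely dependent fields then shows that for $d$ large this field dominates a supercritical Bernoulli oriented percolation on $\Z^2$, which percolates. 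An infinite oriented open path yields an infinite concatenation of $k$-alternating chains sharing only their $\rho$-endpoints, hence an infinite $k$-alternating path; since ``percolation by $k$-alternation occurs'' is a translation-invariant event, ergodicity of the driving Poisson processes upgrades its positive probability to probability one.

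The main obstacle is Proposition~\ref{p:onestep}, and within it the bound $E(X^2)\le(1+o(1))E(X)^2$: one must enumerate carefully how two candidate chains can share centres and evaluate the exponential cost of each coincidence, and simultaneously choose the slabs, the starting/target regions and the cylinders so as to (a) keep $E(X)$ exponentially large, (b) keep the cylinders of non-adjacent $\Z^2$-edges disjoint, and (c) allow the chains of adjacent edges to be concatenated at their $\rho$-endpoints. By contrast the volume asymptotics (Lemma~\ref{l:gamma}), the comparison with oriented percolation, and the $0$--$1$ law are routine.
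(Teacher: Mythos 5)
Your overall architecture (a one-step event in a block, comparison with supercritical oriented percolation on $\Z^2$, then an ergodicity/$0$--$1$ upgrade to probability one) is the same as the paper's; the difference, and the problem, lies in your proof of the one-step estimate, i.e.\ of Proposition \ref{p:onestep}, which you yourself identify as the crux. The paper does not use a second-moment method there: it couples the cluster growth with a pruned branching process keeping at most one child per branch and per generation, and controls geometric interference through the ratios $|D''_i|/|C''_i|$ together with the ordering condition \eqref{cond} (the Galton--Watson term is arranged, in a preliminary lemma, to be \emph{strictly smaller} than the geometric term $\kappa d_{k+1}/(2\rho)$). Your Paley--Zygmund route breaks down exactly at the step you assert: the claim that each coincidence of centres ``deletes an entire independent sub-branch'' and therefore makes the cross terms exponentially smaller than $E(X)^2$ is wrong in direction. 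The per-generation mean numbers $\alpha_i=\lambda_1|D_i|$ for $2\le i\le k$ and $\alpha_{k+1}=\lambda_\rho|D_{k+1}|$ are exponentially \emph{small} in $d$ (by Lemma \ref{l:d} their exponential rates are $\ln(\kappa\sqrt{1-a_i^2})<0$ and $\ln\bigl(\kappa\sqrt{1-a_{k+1}^2}\,\tfrac{1+\rho}{2\rho}\bigr)<0$); only $\alpha_1$ is exponentially large. Hence suppressing the integration for a shared centre of index $i\ge 2$ multiplies the naive estimate by $\alpha_i^{-1}$, which is exponentially large: for instance the pairs of chains sharing only the final $\rho$-centre $x_{k+1}$ contribute, under your bookkeeping, on the order of $E(X)^2/\alpha_{k+1}\gg E(X)^2$.

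That particular cross term is in fact negligible, but for a reason you would have to prove: by approximate isotropy the expected number of admissible slab-confined chains ending at a prescribed point of the shell $C_{k+1}$ is nearly constant over the shell, so endpoint-sharing pairs contribute about $E(X)^2/(\lambda_\rho|C_{k+1}|)\approx E(X)^2\bigl(\tfrac{2\rho}{\kappa\L}\bigr)^d$, and it is precisely your second inequality $\kappa\L/(2\rho)>1$ that kills it -- not the ``deleted sub-branch''. Analogous spread-out estimates, with two-sided lens volumes for the doubly constrained centres adjacent to a shared centre, are needed for every sharing pattern, and it is not clear that your two conditions alone (with no analogue of the ordering in \eqref{cond}) suffice; this is the real technical content of Proposition \ref{p:onestep} and it is missing from your argument. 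The remaining ingredients -- the choice of nearly optimal $a_i$'s, the block layout, the finitely-dependent domination of supercritical oriented percolation (the paper instead builds the block process level by level so that edges between consecutive levels are conditionally independent, to the same effect), and the translation-invariance/ergodicity upgrade to probability one -- are fine and essentially match the paper.
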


As announced in Subsection \ref{s:ideas}, percolation by $k$-alternation of the two-type Boolean model in the supercritical case will be proved by embedding in the model a supercritical $2$-dimensional oriented percolation process.

We thus specify the two first coordinates, and introduce the following notations.
When $d \ge 3$, for any $x \in \R^d$, we write 
$$
x=(x',x'') \in \R^2 \times \R^{d-2}.
$$ 
We write $B'(c,r)$ for the open Euclidean balls of $\R^2$ with center $c\in \R^2$ and radius $r>0$.
In the same way we denote by $B''(c,r)$ the open Euclidean balls of $\R^{d-2}$ with center $c \in \R^{d-2}$ and radius $r>0$.

\subsubsection{One step in the $2$-dimensional oriented percolation model}
\label{s:onestep}

The point here is to define the event that will govern the opening of the edges in the $2$-dimensional oriented percolation process : it is naturally linked to the existence of a finite path composed of $k$ balls of radius $1$ and a ball of radius $\rho$, whose positions of centers are specified.
 
We define, for a given dimension $d$, the two following subsets of $\R^d$ :
\begin{eqnarray*}
W & = &  d^{-1/2} \left((-1,1) \times (-1,0) \times \R^{d-2}\right), \\
W^+ & = &  d^{-1/2} \left((0,1) \times (0,1) \times \R^{d-2}\right). 
\end{eqnarray*}
For  $x_0 \in W$ we set :
\begin{equation} \label{e:bonevnt}
{\mathcal G}^+(x_0) = \left\{
\begin{array}{c}
\hbox{There exist distinct } x_1,\dots,x_k \in \chi_1 \cap  W^+ \hbox{ and } x_{k+1} \in \chi_{\rho} \cap W^+ \\
\hbox{such that } x_0, x_1, \dots, x_{k+1} \hbox{ is a path}
\end{array}
\right\}.
\end{equation}
Our goal here is to prove that the probability of occurrence of this event is asymptotically large :

\begin{prop} \label{p:onestep} 
Let $\rho>1$ and $k \ge 1$ be fixed. Assume that $\kappa \in (\kappa^c_\rho(k),1)$ and choose $p \in (0,1)$. If the dimension $d$ is large enough, then for every $x \in W$,
$$
P({\mathcal G}^+(x))\ge p.
$$
\end{prop}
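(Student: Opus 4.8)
The plan is to prove the stronger statement that $P({\mathcal G}^+(x))\to 1$ as $d\to\infty$, uniformly over $x\in W$, by a second moment argument on a count of admissible paths. Since $\kappa>\kappa^c_\rho(k)$, fix parameters $\bar{a}=(a_2,\dots,a_{k+1})\in[0,1)^k$ (rational, after a harmless discretisation matching the slicing of Lemma~\ref{l:gamma}) for which both
\[
A:=\kappa^{k+1}\,\frac{(1+\rho)^2}{4\rho}\,\sqrt{\textstyle\prod_{2\le i\le k+1}(1-a_i^2)}\;>\;1
\qquad\text{and}\qquad
G:=\frac{\kappa\,\L(a_2,\dots,a_{k+1})}{2\rho}\;>\;1 ;
\]
these are, respectively, the genealogical and the geometric half of the inequality $\kappa>\kappa^c_\rho(k)$ read off from \eqref{d:kappack}. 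For $x_0=x\in W$ let $Z=Z(x)$ be the number of tuples $(x_1,\dots,x_k)\in\chi_1^k$ with pairwise distinct entries, together with $x_{k+1}\in\chi_\rho$, such that: $x_0,x_1,\dots,x_{k+1}$ is a path (that is, $\|x_0-x_1\|<1+\rho$, $\|x_i-x_{i+1}\|<2$ for $1\le i\le k-1$, and $\|x_k-x_{k+1}\|<1+\rho$); all of $x_1,\dots,x_{k+1}$ lie in $W^+$; $\|x_1-x_0\|\approx d_1=1+\rho$; and, for $2\le i\le k+1$, $x_i$ lies in the slice of $B(x_{i-1},r_i)$ whose cosine relative to the direction of $x_{i-1}-x_0$ belongs to $[a_i,a_i^+]$ (that direction has first two coordinates of size $O(d^{-1/2})$, hence negligible). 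This forces $\|x_i-x_0\|\approx d_i$ for every $i$. Since $\{Z(x)\ge 1\}\subseteq{\mathcal G}^+(x)$, it suffices to show that $P(Z(x)=0)\to 0$ uniformly in $x\in W$.

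\emph{First moment.} By the multivariate Mecke equation, $E(Z(x))=\lambda_1^{k}\lambda_\rho\int\cdots\int\mathbf{1}[\cdots]\,dx_1\cdots dx_{k+1}$. The membership $x_i\in W^+$ confines the first two coordinates of each $x_i$ to a set of area $\Theta(1/d)$, which is entirely covered by the balls involved (radii of order $1$, centres $O(d^{-1/2})$-close), so it costs only a factor polynomial in $1/d$. The remaining integral is $(d-2)$-dimensional and its slices are estimated by Lemma~\ref{l:gamma} (with $d$ replaced by $d-2$, a subexponential change), each contributing $v_{d-2}\,(r_i\sqrt{1-a_i^2})^{d-2}$ up to subexponential factors. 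The powers of $v_d$ cancel against those implicit in $\lambda_1,\lambda_\rho$, and one gets $\frac1d\ln E(Z(x))\to\ln A>0$ uniformly in $x$; in particular $E(Z(x))\to\infty$ exponentially fast. This step uses only $A>1$.

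\emph{Second moment.} Write $E\big(Z(x)(Z(x)-1)\big)$ as the expected number of ordered pairs of distinct admissible paths and split it according to the set $S$ of indices at which the two paths coincide (they always share $x_0$). The term $S=\emptyset$ contributes at most $E(Z(x))^2$, so it is enough to bound each of the (fewer than $2^{k+1}$) nonempty patterns by $o(E(Z(x))^2)$. For every such pattern the powers of $v_d$ again cancel and the surviving exponential rate is strictly below $2\ln A$, through two mechanisms. If the two paths share an initial segment of length $j\ge 1$ before branching, the contribution equals $E(Z(x))^2/B_j^d$ up to polynomial factors, with $B_j:=\kappa^{j}\,\frac{1+\rho}{2}\,\prod_{2\le i\le j}\sqrt{1-a_i^2}$ for $1\le j\le k$. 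One checks that $B_1\ge B_2\ge\cdots\ge B_k$ (because $\kappa<1$) and that $B_k=A\cdot\frac{2\rho}{\kappa(1+\rho)\sqrt{1-a_{k+1}^2}}>A>1$ (because $\kappa<1$ and $\rho>1$), so $B_j>1$ for every $j$, and these patterns are negligible. If instead the two paths re-merge after having branched (they share a vertex, or a final segment, not lying in a common initial segment), then two independently grown subchains are forced to meet at a point within distance $\L(\bar{a})$ of $x_0$; the cost of this coincidence is an exponential factor which is $<1$ thanks to the geometric bound $G>1$. For $k=1$ this is exactly the fact that the unique re-merge pattern is negligible as soon as $\kappa>\phi_2(a_2):=\sqrt 2\,\rho/\big((1+\rho)\sqrt{1+a_2}\big)$, which is why the infimum \eqref{d:kappack} defining $\kappa^c_\rho(1)$ is an infimum of a maximum of two quantities. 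Summing the finitely many patterns gives $E\big(Z(x)(Z(x)-1)\big)\le(1+o(1))E(Z(x))^2$, hence $\var(Z(x))=o(E(Z(x))^2)$, uniformly in $x$.

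\emph{Conclusion and main obstacle.} By Chebyshev's inequality, $P(Z(x)=0)\le\var(Z(x))/E(Z(x))^2\to 0$ uniformly in $x\in W$, so for $d$ large $P({\mathcal G}^+(x))\ge P(Z(x)\ge 1)\ge p$ for every $x\in W$. The difficult step, and the reason this is the main technical section, is the second moment estimate: a single step of the chain from one ball to the next has exponentially small probability of success (for the inner steps the rate $\kappa\sqrt{1-a_i^2}$ is $<1$), so there is no naive concentration. The argument works only because each generation is nonetheless exponentially populated and because the two halves of $\kappa>\kappa^c_\rho(k)$, namely $A>1$ and $G>1$, are exactly calibrated so that every wasteful overlap pattern is subdominant. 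One must also keep all estimates uniform in $x\in W$, verify that passing from the discretised slices of Lemma~\ref{l:gamma} to genuine angular constraints (as in Lemma~\ref{l:majorationk}) does not change the exponential rates, and check that the localisation to $W^+$ is compatible with the chosen reference directions.
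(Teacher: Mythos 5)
Your route is genuinely different from the paper's: the paper never computes a variance, but instead thins the cloud of admissible chains into a comparison process in which each first-generation point keeps at most one descendant per level, and bounds the geometric interference terms directly (after first upgrading $\kappa>\kappa^c_\rho(k)$ to the two-sided condition \eqref{cond}). Your first-moment estimate and your treatment of pairs of paths sharing an initial segment (the quantities $B_j$, all $>1$) are correct. But the variance bound is the entire content of the proposition, and the half of it that is hard -- the re-merge patterns -- is asserted rather than proved, with an accounting that is not the right one. A merge at an intermediate level $j\in\{2,\dots,k\}$ produces a collision factor governed by $\kappa d_j/2$ (which is $>1$ because $A>1$ forces $\kappa>2/(1+\rho)$, not because $G>1$), and if the two paths stay merged over a block of consecutive levels, each additional shared step costs a further factor $1/(\kappa\sqrt{1-a_i^2})>1$, so one must check block by block (and for patterns with several blocks) that the net exponent stays negative; none of this appears, your only worked case being $k=1$, where the unique re-merge pattern does reduce to $\phi_2$.

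More importantly, you give no mechanism for quantifying the ``cost of the coincidence'' at a merge point, and the naive bound fails: for the pattern sharing only $x_{k+1}$, bounding the region available to the shared point by the admissible slice of a single parent gives a contribution of order $E(Z)^2/\alpha_{k+1}$ with $\alpha_{k+1}=\lambda_\rho|D_{k+1}|\to 0$ exponentially fast, i.e.\ a ratio that blows up instead of vanishing. To obtain the advertised factor $\bigl(\lambda_\rho v_d d_{k+1}^d\bigr)^{-1}\approx G^{-d}$ (and $\bigl(\lambda_1 v_d d_j^d\bigr)^{-1}$ at intermediate levels) you are implicitly using that the density of chain endpoints is essentially uniform over the sphere of radius $d_j$; this is provable -- the slice construction is rotation invariant in the last $d-2$ coordinates, so the endpoint density depends only on the radius up to subexponential two-dimensional corrections -- but it is a genuine estimate that must be stated and proved, and it also requires pinning $\|x_i\|$ to a thin shell around $d_i$ (as the paper does with $C''_i$; your slice constraints alone only bound the step length from above, so ``$\|x_i-x_0\|\approx d_i$'' is not forced and cross-level coincidences are not excluded). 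The inequalities do seem to hold under $A>1$ and $G>1$, so the strategy looks salvageable, but as written the key second-moment estimate is a gap, not a proof.
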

Note already that by translation invariance, $P({\mathcal G}^+(x))$ does not depend on $x''$, so we can assume without loss of generality that $x''=0$. In the sequel of this subsection, $\rho>1$ and $k \ge 1$ are fixed. 

\bigskip

We first recall the definitions of the $(d_i)_{1 \le i \le k+1}$ and of $
\kappa^c_{\rho}(k)$ we give in the introduction. We set  $r_1=r_{k+1}=1+\rho$ and for $i \in \{2, \dots, k\}$, $r_i=2$.
Then, for a given sequence $(a_i)_{2 \le i \le k+1} \in [0,1)^k$, we build an increasing sequence $(d_i)_{1 \le i \le k+1}$  by setting $d_1=1+\rho$ and, for every $i\in \{2, \dots,k+1\}$ :
$$
d_{i}^2=d_{i-1}^2+2r_ia_id_{i-1}+r_i^2.
$$
Finally, we note $\L(a_2,\dots,a_{k+1})=d_{k+1}$ and we set
$$
\kappa^c_{\rho}(k) = \inf_{0 \le a_2 ,\dots, a_{k+1} < 1} 
\max\left(
\left(\frac{4\rho}{(1+\rho)^2\sqrt{\prod_{2 \le i \le k+1}(1-a_i^2)}}\right)^{1/(k+1)},\frac{2\rho}{\L(a_2,\dots,a_{k+1})}
\right).
$$
The first step consists in choosing a nearly optimal sequence $(a_i)_{2 \le i \le k+1} \in [0,1)^k$ satisfying some extra inequalities :

\begin{lemma}
We can choose $(a_i)_{2 \le i \le k+1} \in [0,1)^k$ such that :
\begin{equation}
1<\kappa^{k+1}\frac{(1+\rho)^2}{4\rho}\sqrt{\prod_{2 \le j \le k+1}(1-a_j^2)}<\kappa \frac{d_{k+1}}{2\rho}. \label{cond}
\end{equation}
\end{lemma}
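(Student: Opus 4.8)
The plan is to strip the algebra out of the definition of $\kappa^c_\rho(k)$ and then to fix up a near-optimal sequence by a one-variable monotonicity argument on the last coordinate $a_{k+1}$.

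First I would observe that the two products in the two sides of \eqref{cond} are, after obvious rearrangements, exactly the two quantities whose maximum defines $\kappa^c_\rho(k)$. Indeed, for $a=(a_2,\dots,a_{k+1})\in[0,1)^k$ put $f(a)=\kappa^{k+1}\tfrac{(1+\rho)^2}{4\rho}\sqrt{\prod_{2\le j\le k+1}(1-a_j^2)}$ and $g(a)=\kappa\,\L(a)/(2\rho)$. Then $f(a)>1$ is equivalent to $\bigl(4\rho/((1+\rho)^2\sqrt{\prod_j(1-a_j^2)})\bigr)^{1/(k+1)}<\kappa$, and $g(a)>1$ is equivalent to $2\rho/\L(a)<\kappa$. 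Since $\kappa^c_\rho(k)$ is the infimum over $a$ of the maximum of these two expressions and $\kappa>\kappa^c_\rho(k)$, there is some $a^\ast\in[0,1)^k$ at which both are $<\kappa$, hence $f(a^\ast)>1$ and $g(a^\ast)>1$. If moreover $f(a^\ast)<g(a^\ast)$ we are done with $a=a^\ast$; so the real work is the case $f(a^\ast)\ge g(a^\ast)$, where I must push $f$ below $g$ while keeping $f>1$.

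To do this I would freeze $a_2^\ast,\dots,a_k^\ast$ and let the last coordinate $s=a_{k+1}$ increase from $a_{k+1}^\ast$ towards $1$. Along this family $f$ is continuous and strictly decreasing (only the factor $\sqrt{1-s^2}$ depends on $s$, and it decreases to $0$), while $g$ is continuous and strictly increasing, because $\L=d_{k+1}$ satisfies $d_{k+1}^2=d_k^2+2(1+\rho)s\,d_k+(1+\rho)^2$ with $d_k>0$ independent of $s$. Hence $h:=f-g$ is continuous and strictly decreasing on $[a_{k+1}^\ast,1)$, with $h\ge 0$ at $s=a_{k+1}^\ast$ and $h<0$ for $s$ near $1$ (there $f\to0$ while $g$ stays bounded away from $0$); by the intermediate value theorem there is a unique $s_0\in[a_{k+1}^\ast,1)$ with $f(s_0)=g(s_0)$. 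Since $g$ is non-decreasing in $s$ and $s_0\ge a_{k+1}^\ast$, we get $g(s_0)\ge g(a^\ast)>1$, so $f(s_0)=g(s_0)>1$; by continuity of $f$ I then pick $s^\ast\in(s_0,1)$ with $f(s^\ast)>1$, and for such $s^\ast$ the strict decrease of $h$ gives $f(s^\ast)<g(s^\ast)$. Then $a=(a_2^\ast,\dots,a_k^\ast,s^\ast)\in[0,1)^k$ satisfies $1<f(a)<g(a)$, which is \eqref{cond}.

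The one point that needs care is keeping $f$ above $1$ at the moment it first drops below $g$, and this is exactly what $f(s_0)=g(s_0)\ge g(a^\ast)>1$ delivers: moving the outermost ball farther out only enlarges the geometric term $g$, so at the crossover $g$ — and hence $f$ — is still at least $g(a^\ast)>1$. Everything else (the equivalences in step one, the monotonicity and continuity in step two) is immediate from the explicit formulas.
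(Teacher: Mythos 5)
Your proof is correct and follows essentially the same strategy as the paper: first pick $(a_i)$ with both terms of the max below $\kappa$ (so both sides of \eqref{cond} exceed $1$), then exploit that one coordinate makes the Galton--Watson term strictly decrease and the geometric term $\kappa\,\L/(2\rho)$ strictly increase, and use continuity to land strictly between them. The only difference is that you vary the last coordinate $a_{k+1}$ (which makes the monotonicity of $d_{k+1}$ immediate since $d_k$ is unaffected), while the paper varies $a_2$; this is a cosmetic variation of the same argument.
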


\begin{proof}
As $\kappa^c_\rho(k)<\kappa$, we can choose $(a_i^0)_{2 \le i \le k+1} \in (0,1)^k$ such that the two following conditions
\begin{eqnarray}
 \kappa & > & \left( \frac{4\rho}{(1+\rho)^2\sqrt{1-a_2^2}\dots\sqrt{1-a_{k+1}^2})}\right)^{\frac1{k+1}}, \label{CGW} \\
 \kappa & > & \frac{2\rho}{d_{k+1}} \label{Cvolume} 
\end{eqnarray}
are fullfilled for $(a_i)_i = (a_i^0)_i$. We fix
$(a_3, \cdots, a_{k+1})=(a_3^0, \cdots, a_{k+1}^0)$.
Note that
\begin{eqnarray*}
f: \; a_2 \mapsto \kappa \frac{d_{k+1}}{2\rho}&& \hbox{is continuous and increasing}, \\
g: \; a_2 \mapsto \kappa^{k+1}\frac{(1+\rho)^2}{4\rho}\sqrt{\prod_{2 \le j \le k+1}(1-a_j^2)}&& \hbox{is continuous and decreasing},
\end{eqnarray*}
and that $\lim_{a_2 \to 1}g(a_2)=0$. Moreover, Conditions \eqref{Cvolume} and \eqref{CGW} ensure that $f(a_2^0)>1$ and $g(a_2^0)>1$. 

Thus if $f(a_2^0) > g(a_2^0)$ the proof is over.
If $f(a_2^0) \le g(a_2^0)$, we can take $a_2 > a_2^0$ such that $1<g(a_2)<f(a_2^0)$ : then $f(a_2) \ge f(a_2^0) > g(a_2) > 1$ and the lemma is proved. \finpreuve
\end{proof}

\bigskip
Note that \eqref{cond} implies \eqref{CGW} and \eqref{Cvolume}.

\bigskip
As explained in Subsection \ref{s:subcritical}, the main contribution to the number $N_k$ of centers $x_{k+1}$ of balls of radius $\rho$ that are linked to a ball of radius $\rho$ centered at the origin by a chain $(x_i)_{1 \le i \le k}$ of $k$ balls of radius $1$ -- see the precise definition \eqref{e:Nk} -- is obtained for $\|x_i\| \sim d_i$, where the $d_i's$ are build from a (nearly) optimal sequence $(a_i)_{2 \le i \le k+1} \in [0,1)^k$. So we fix a nearly optimal family $(a_i)_{2 \le i \le k+1} \in (0,1)^k$ satisfying \eqref{cond}, we build the associated family of distances $(d_i)_{1 \le i \le k+1} \in (0,1)^k$ and we are going to look for a good sequence of centers $(x_i)_{1 \le i \le k+1} \in (0,1)^k$ with $\|x_i\| \sim d_i$. 

We thus introduce the following subsets of $\R^2$ :
\begin{eqnarray*}
D'_0 & = & \big(-d^{-1/2},d^{-1/2}\big) \times \big( -d^{-1/2}, 0 \big), \\
\forall i \in \{1, \dots, k+1\} \quad D'_i & = & \big(0,d^{-1/2}\big)^2,
\end{eqnarray*}
and the followining sets in $\R^{d-2}$
\begin{eqnarray*}
C''_0 & = & \{0\}, \\
\forall i \in \{1, \dots, k+1\} \quad C''_i & = & B''(0, d_i-2d^{-1}) \setminus B''(0, d_i-3d^{-1}).
\end{eqnarray*}
Finally, for $i \in \{0,\dots,k+1\}$, we set
$
C_i   =   D'_i\times C''_i
$. Note that for $d$ large enough, these sets are disjoint. The next lemma controls the asymptotics in the dimension $d$ of the volume of these sets

\begin{lemma} \label{l:c}
For every $i \in \{1,\dots,k+1\}$ :
$$
\lim_{d \to +\infty} \frac1d \ln \frac{|C''_i|}{v_{d-2}}=\lim_{d \to +\infty} \frac1d \ln \frac{|C_i|}{v_{d}}=\ln d_i.
$$ 
\end{lemma}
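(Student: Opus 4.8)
The plan is to compute the volume of the annulus $C''_i$ in closed form and then reduce the statement to an elementary asymptotic estimate. Recall that a Euclidean ball of radius $r$ in $\R^{d-2}$ has volume $v_{d-2}r^{d-2}$, so that
$$
|C''_i| = v_{d-2}\left( (d_i - 2d^{-1})^{d-2} - (d_i - 3d^{-1})^{d-2} \right).
$$
Since $(a_i)_{2\le i\le k+1}$, $\rho$ and $k$ are fixed before we let $d\to\infty$, the number $d_i$ is a fixed constant with $d_i \ge d_1 = 1+\rho > 2$; in particular, for $d$ large enough, $0 < d_i - 3d^{-1} < d_i - 2d^{-1}$, so the bracket above is positive and $C''_i$ is a genuine nonempty annulus. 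Thus everything comes down to estimating $\frac1d \ln\big( (d_i - 2d^{-1})^{d-2} - (d_i - 3d^{-1})^{d-2}\big)$.

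The key step is to factor out $d_i^{d-2}$:
$$
(d_i - 2d^{-1})^{d-2} - (d_i - 3d^{-1})^{d-2}
= d_i^{d-2}\left[\Big(1 - \tfrac{2}{d_i d}\Big)^{d-2} - \Big(1 - \tfrac{3}{d_i d}\Big)^{d-2}\right].
$$
As $d\to\infty$ one has $\big(1 - \tfrac{2}{d_i d}\big)^{d-2} \to \exp(-2/d_i)$ and $\big(1 - \tfrac{3}{d_i d}\big)^{d-2} \to \exp(-3/d_i)$, so the bracket converges to $\exp(-2/d_i) - \exp(-3/d_i)$, which is a strictly positive finite constant. Since $\frac1d\ln$ of a sequence converging to a positive constant tends to $0$, and $\frac1d \ln(d_i^{d-2}) = \frac{d-2}{d}\ln d_i \to \ln d_i$, we get $\lim_{d\to\infty}\frac1d \ln\big(|C''_i|/v_{d-2}\big) = \ln d_i$, which is the first assertion.

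For the second assertion, note that $|C_i| = |D'_i|\cdot|C''_i| = d^{-1}|C''_i|$, since $D'_i = (0,d^{-1/2})^2$ has area $d^{-1}$. Hence
$$
\frac1d \ln\frac{|C_i|}{v_d} = \frac1d \ln\frac{|C''_i|}{v_{d-2}} + \frac1d \ln\frac{v_{d-2}}{d\, v_d}.
$$
Using $v_d = \pi^{d/2}/\Gamma(d/2 + 1)$ one computes $v_{d-2}/v_d = (d/2)/\pi$, so that $v_{d-2}/(d\,v_d) = 1/(2\pi)$ is constant and $\frac1d \ln\frac{v_{d-2}}{d\,v_d}\to 0$. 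Combining with the first assertion yields $\lim_{d\to\infty}\frac1d\ln(|C_i|/v_d) = \ln d_i$. The only mildly delicate point is checking that the difference $(d_i - 2d^{-1})^{d-2} - (d_i - 3d^{-1})^{d-2}$ does not decay super-exponentially (which would spoil the $\frac1d\ln$ limit); the factorization above settles this at once, since the bracket converges to the positive constant $\exp(-2/d_i) - \exp(-3/d_i)$. Everything else is bookkeeping with the normalizing constants $v_d$ and $v_{d-2}$.
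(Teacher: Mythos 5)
Your proof is correct, and it fills in exactly the kind of "elementary computations" that the paper's proof of Lemma \ref{l:c} leaves to the reader: the annulus volume formula, factoring out $d_i^{d-2}$ so the remaining bracket tends to the positive constant $\exp(-2/d_i)-\exp(-3/d_i)$, and the observation that $|D'_i|=d^{-1}$ together with $v_{d-2}/(d\,v_d)=1/(2\pi)$ contributes nothing at the $\frac1d\ln$ scale. Nothing further is needed.
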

\proof This can be proven by elementary computations.  \finpreuve

\bigskip
Each $x_i$ will be taken in $C_i$, but we also have to ensure that the $(x_i)_{1 \le i \le k+1}$ form a path. 
Note that for $i \in \{2,\dots,k+1\}$, we have $0<d_{i-1}+a_ir_i<d_i$, which legitimates the following definition. See also Figure \ref{f:lesdis}.
For $i\in \{2, \dots, k+1\}$ and $d$ large enough, we denote by $\theta_i$ the unique real number in $(0, \pi/2)$ such that 
$$
\cos \theta_i =\frac{d_{i-1}+a_ir_i}{d_i}+d^{-1/2}.
$$
We introduce next, for $y \in C_{i-1}$, the following subset of $\R^{d-2}$ :
$$
D''_i(y'') =  \{z'' \in C''_i : \; \left<z'',y''\right>\ge \|y''\|.\|z''\|. \cos \theta_i\}
$$ 
We also set $D''_0=C''_0$ and $D''_1(y'')=C''_1$ for every $y \in C_0$.
Finally, we define for every $i \in \{1,\dots,k+1\}$ and $y \in C_{i-1}$ :
$$
D_i(y) =  D'_i \times D''_i(y'')\subset C_i,
$$
and $D_0=D'_0 \times D''_0$.

\begin{lemma} \label{l:G}
$\bullet$ If the dimension $d$ is large enough, for every $i\in \{1, \dots, k+1\}$ and $y \in C_{i-1}$,
$$
D_{i}(y) \; \subset \; B(y,r_i) \cap C_{i}.
$$
$\bullet$ Let $x_0 \in D_0$. If there exist $X_1, \dots, X_k \in \chi_1$ and $X_{k+1} \in \chi_{\rho}$ such that 
$X_1 \in D_1(x_0), X_2 \in D_2(X_1), \dots, X_{k+1} \in D_{k+1}(X_k)$, then the event ${\mathcal G}_+(x_0)$ occurs.
\end{lemma}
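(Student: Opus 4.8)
The plan is to take the second bullet as a bookkeeping consequence of the first, and to concentrate the work on the geometric inclusion $D_i(y)\subset B(y,r_i)\cap C_i$. The inclusion into $C_i$ is immediate: by its very definition $D''_i(y'')\subset C''_i$, hence $D_i(y)=D'_i\times D''_i(y'')\subset D'_i\times C''_i=C_i$ (and $D_0=C_0$). So the only thing to prove is that every $z\in D_i(y)$ satisfies $\|z-y\|<r_i$. I would split coordinates: writing $z=(z',z'')$ and $y=(y',y'')$ in $\R^2\times\R^{d-2}$, one has $\|z-y\|^2=\|z'-y'\|^2+\|z''-y''\|^2$; since $z'\in D'_i$ and $y'\in D'_{i-1}$ both live in a square of side $2d^{-1/2}$, the planar part satisfies $\|z'-y'\|^2\le 8/d$, and everything then rests on estimating the radial part.

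For the radial part I would separate $i=1$ and $i\ge 2$. If $i=1$, then $y=x_0\in C_0$, so $y''=0$ and $z''\in D''_1(y'')=C''_1$, giving $\|z''-y''\|^2=\|z''\|^2\le(d_1-2d^{-1})^2$; combined with the planar bound, $\|z-y\|^2\le d_1^2-(4d_1-8)d^{-1}+4d^{-2}<d_1^2=r_1^2$ once $d$ is large, because $d_1=1+\rho>2$. If $i\ge 2$, then $z''\in D''_i(y'')$ forces $\langle z'',y''\rangle\ge\|z''\|\,\|y''\|\cos\theta_i$, so by Cauchy--Schwarz
\[
\|z''-y''\|^2\le\|z''\|^2+\|y''\|^2-2\|z''\|\,\|y''\|\cos\theta_i .
\]
I would then insert $\cos\theta_i=(d_{i-1}+a_ir_i)/d_i+d^{-1/2}$ (which lies in $(0,1)$ for $d$ large, since $0<d_{i-1}+a_ir_i<d_i$ is a strict inequality with a fixed gap, so $\theta_i$ is well defined), write $\|z''\|=d_i-u$ and $\|y''\|=d_{i-1}-w$ with $u,w\in[2d^{-1},3d^{-1}]$, expand, and use the recursion in the form $r_i^2=d_i^2-d_{i-1}^2-2a_ir_id_{i-1}$. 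The "idealised" terms ($u=w=0$, no $d^{-1/2}$) collapse exactly to $r_i^2$, leaving an error equal to an $O(1/d)$ contribution from $u,w$ minus a term $2\|z''\|\,\|y''\|d^{-1/2}\ge c_id^{-1/2}$ with $c_i>0$ fixed, coming from the deliberate $d^{-1/2}$ in $\cos\theta_i$; since $d^{-1/2}$ dominates $1/d$, this error is $<-8/d$ for $d$ large, whence $\|z-y\|^2\le 8/d+r_i^2+(\text{error})<r_i^2$. Taking $d$ past the finitely many thresholds $i\in\{1,\dots,k+1\}$ proves the first bullet.

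For the second bullet I would simply run the chain. Since $x_0\in D_0=C_0\subset W$, the event ${\mathcal G}^+(x_0)$ is meaningful. Put $X_0=x_0$; by hypothesis $X_i\in D_i(X_{i-1})$ for $i=1,\dots,k+1$, and at each step $X_{i-1}\in C_{i-1}$ (true for $i=1$ since $X_0\in C_0$, and for $i\ge 2$ since $X_{i-1}\in D_{i-1}(X_{i-2})\subset C_{i-1}$), so the first bullet yields $X_i\in B(X_{i-1},r_i)\cap C_i$. In particular $X_1,\dots,X_k\in\chi_1\cap W^+$ and $X_{k+1}\in\chi_\rho\cap W^+$ because $C_i\subset W^+$ for $i\ge 1$; the $X_1,\dots,X_k$ are pairwise distinct because $C_1,\dots,C_k$ are pairwise disjoint for $d$ large; and $\|X_i-X_{i-1}\|<r_i$ says exactly that $B(X_{i-1},R_{i-1})\cap B(X_i,R_i)\neq\emptyset$, where $R_0=R_{k+1}=\rho$ and $R_1=\dots=R_k=1$, since $r_1=1+\rho=R_0+R_1$, $r_i=2=R_{i-1}+R_i$ for $2\le i\le k$, and $r_{k+1}=1+\rho=R_k+R_{k+1}$. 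Thus $x_0,X_1,\dots,X_{k+1}$ is an admissible path and ${\mathcal G}^+(x_0)$ occurs.

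The only real obstacle is the $i\ge 2$ case of the first bullet: one has to be sure that all the $O(1/d)$ slack — the thickness $d^{-1}$ of the spherical shells $C''_i$, together with the side $d^{-1/2}$ of the planar boxes $D'_i$, which enters $\|z'-y'\|^2$ at order $1/d$ — is absorbed by the $d^{-1/2}$ cushion built into $\cos\theta_i$. Once that estimate is in place, everything else is unwinding the definitions.
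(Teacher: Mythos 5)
Your proof is correct and follows essentially the same route as the paper: the inclusion into $C_i$ from the definitions, the split $\|z-y\|^2=\|z'-y'\|^2+\|z''-y''\|^2$ with the defining inequality of $D''_i(y'')$, the expansion showing the idealized terms reproduce $r_i^2$ via the recursion while the $d^{-1/2}$ cushion in $\cos\theta_i$ absorbs all $O(1/d)$ errors, the separate case $i=1$ using $d_1=1+\rho>2$, and the second bullet deduced from the first together with the disjointness of the sets $C_i$. No gaps to report.
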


\proof 
$\bullet$ The inclusion $D_i(y) \subset C_i$ is clear for every $i \in \{1,\dots,k+1\}$.
Let $i \in \{2, \dots, k+1\}$, $y \in C_{i-1}$ and $z \in D_{i}(y)$. Then, as soon as $d$ is large enough,
\begin{eqnarray*}
 \|z-y\|^2 
 & = & \|z'-y'\|^2+ \|z''-y''\|^2 \\
 & \le & \frac2d+\|y''\|^2+\|z''\|^2-2<y'',z''> \\
 & \le & \frac2d+(d_{i-1}-2d^{-1})^2+(d_i-2d^{-1})^2-2(d_{i-1}-3d^{-1})(d_i-3d^{-1}) \cos \theta_i \\
 & \le & d_i^2 +d_{i-1}^2-2d_{i-1}(d_{i-1}+a_ir_i)-2d^{-1/2}d_id_{i-1}+O_i(d^{-1}) \\
 & \le & r_i^2-2d^{-1/2}d_id_{i-1}+O_i(d^{-1}) \le r_i^2. 
\end{eqnarray*}
Let now $y \in C_0$ and $z \in D_1(y)$. As $d_1=1+\rho=r_1 > 2$, we obtain, for $d$ large enough : 
$$
\|z-y\|^2 =  \|z'-y'\|^2+ \|z''-y''\|^2  \le \frac8d+(d_1-2d^{-1})^2 \le r_1^2.
$$
$\bullet$ The second point is a simple consequence of the first point, of the fact that the sets $D_i(x_{i-1})$, as the sets $C_i$, are disjoint and of the definition of the event ${\mathcal G}_+$. 
\finpreuve

\bigskip
Note that for $i \in \{1,\dots,k+1\}$, $|D_i(y)|$ and $|D''_i(y'')|$ do not depend on the choice of $y \in C_{i-1}$.
We thus denote by $|D_i|$ and $|D''_i|$ these values. We now give asymptotic estimates for these values :

\begin{lemma} \label{l:d}
For every $i \in \{2,\dots,k+1\}$, 
$$
\lim_{d \to +\infty} \frac1d \ln \frac{|D''_{i}|}{v_{d-2}} = \lim_{d \to +\infty} \frac1d \ln \frac{|D_{i}|}{v_d} =\ln (r_i \sqrt{1-a_i^2}).
$$
\end{lemma}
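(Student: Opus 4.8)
The plan is to reduce the statement to a high-dimensional spherical-cap volume estimate of the same flavour as Lemma~\ref{l:gamma}, after isolating the elementary geometric identity responsible for the constant $r_i\sqrt{1-a_i^2}$.

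\emph{Two preliminary reductions.} Since $D_i(y)=D'_i\times D''_i(y'')$ with $|D'_i|=d^{-1}$, and since $\frac1d\ln(v_{d-2}/v_d)\to 0$ and $\frac1d\ln(d^{-1})\to 0$, the two claimed limits are equal, so it suffices to prove $\frac1d\ln\big(|D''_i|/v_{d-2}\big)\to\ln(r_i\sqrt{1-a_i^2})$. By rotation invariance in $\R^{d-2}$ we may assume $y''=\|y''\|e_1$ (and $\|y''\|>0$ since $y\in C_{i-1}$); writing $z''=(t,w)\in\R\times\R^{d-3}$ with $t=\langle z'',e_1\rangle$, the set $D''_i(y'')$ becomes $\{(t,w):\; d_i-3d^{-1}\le\|z''\|\le d_i-2d^{-1},\; t\ge\|z''\|\cos\theta_i\}$, the last condition forcing $t>0$ and being equivalent to $\|w\|\le t\tan\theta_i$.

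\emph{The geometric identity.} From $\cos\theta_i=\frac{d_{i-1}+a_ir_i}{d_i}+d^{-1/2}$ and the defining relation $d_i^2=d_{i-1}^2+2r_ia_id_{i-1}+r_i^2$ one computes
$$
d_i^2\sin^2\theta_i=d_i^2-(d_{i-1}+a_ir_i+d_id^{-1/2})^2=r_i^2(1-a_i^2)-O(d^{-1/2}),
$$
so that $d_i\sin\theta_i\to r_i\sqrt{1-a_i^2}$ as $d\to\infty$ (equivalently $\theta_i\to\arccos\frac{d_{i-1}+a_ir_i}{d_i}\in(0,\pi/2)$). Hence the target becomes $\frac1d\ln(|D''_i|/v_{d-2})\to\ln(d_i\sin\theta_i)$, the mild $d$-dependence of $\theta_i$ being harmless since it converges.

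\emph{The cap estimate, by sandwiching as in Lemma~\ref{l:gamma}.} For the upper bound, $\|w\|\le t\tan\theta_i$ and $t^2+\|w\|^2\le(d_i-2d^{-1})^2$ together force $\|w\|\le(d_i-2d^{-1})\sin\theta_i\le d_i\sin\theta_i$ and $0<t<d_i$, whence $|D''_i|\le d_i\,v_{d-3}(d_i\sin\theta_i)^{d-3}$ and, using $\frac1d\ln(v_{d-3}/v_{d-2})\to 0$, $\limsup_{d\to\infty}\frac1d\ln(|D''_i|/v_{d-2})\le\ln(d_i\sin\theta_i)$. For the lower bound, fix $\delta\in(0,\theta_i)$: for $d$ large, $D''_i(y'')$ contains $\{z'':\; d_i-3d^{-1}\le\|z''\|\le d_i-2d^{-1},\;\angle(z'',e_1)\le\theta_i-\delta\}$, whose volume is $\omega(\theta_i-\delta,d)\int_{d_i-3d^{-1}}^{d_i-2d^{-1}}s^{d-3}\,ds$, where $\omega(\alpha,d)$ denotes the surface measure of the half-angle-$\alpha$ cap of the unit sphere of $\R^{d-2}$; here $\frac1d\ln\int_{d_i-3d^{-1}}^{d_i-2d^{-1}}s^{d-3}\,ds\to\ln d_i$ and $\frac1d\ln(\omega(\theta_i-\delta,d)/v_{d-2})\to\ln\sin(\theta_i-\delta)$, the latter being the Laplace-type spherical-cap asymptotics underlying Lemma~\ref{l:gamma}. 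Letting $\delta\to0$ closes the sandwich and yields the claim. There is no serious obstacle here: the whole content is the identity $d_i\sin\theta_i\to r_i\sqrt{1-a_i^2}$ together with the cap estimate, morally Lemma~\ref{l:gamma} again; the only care needed is to carry the $O(d^{-1})$ and $O(d^{-1/2})$ perturbations (in the radii $d_i-2d^{-1}$, $d_i-3d^{-1}$ and in $\cos\theta_i$) and the $d$-dependence of $\theta_i$ through the exponential-scale limits, where they all vanish under $\frac1d\ln(\cdot)$.
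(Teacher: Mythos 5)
Your proof is correct and takes essentially the same route as the paper: the whole content is the identity $d_i\sin\theta_i\to r_i\sqrt{1-a_i^2}$ (from the definition of $\theta_i$ and the recursion for $d_i$), a factorization of the radial annulus contribution, and a sandwich of the angular part between a circumscribed cylinder and an inscribed set, all negligible corrections vanishing under $\tfrac1d\ln(\cdot)$. The only (cosmetic) difference is in the lower bound: the paper inscribes an explicit cone in the unit cap $S$, whereas you quote the standard spherical-cap surface asymptotics (which, strictly, is not what Lemma \ref{l:gamma} proves, but is justified in one line by the same inscribed-cone trick).
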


\proof
We have, by homogeneity and isotropy:
\begin{equation} \label{e:louis}
|D''_i| = \left((d_i-2d^{-1})^{d-2}-(d_i-3d^{-1})^{d-2}\right) |S|
\end{equation}
where
$
S = \{x=(x_1,\dots,x_{d-2}) \in B''(0,1) : x_1 \ge \|x\| \cos(\theta_i)\}.
$\\
But $S$ is included in the cylinder 
$$
\{(x_i)_{1 \le i \le d-2} \in \R^{d-2} : x_1 \in [0,1], \|(x_2,\dots,x_{d-2})\| \le \sin(\theta_i)\}
$$
and $S$ contains the cone 
$$
\{ (x_i)_{1 \le i \le d-2}\in \R^{d-2} : x_1 \in [0,\cos(\theta_i)], \|(x_2,\dots,x_{d-2})\| \le x_1\sin(\theta_i)\cos(\theta_i)^{-1}\}.
$$
Therefore :
\begin{equation} \label{e:louis2}
v_{d-3}\cos(\theta_i) \sin(\theta_i)^{d-3}(d-2)^{-1} \le |S| \le v_{d-3}\sin(\theta_i)^{d-3}.
\end{equation}
From \eqref{e:louis}, \eqref{e:louis2}, and the limits $\cos(\theta_i) \to (d_{i-1}+a_ir_i)d_i^{-1} \neq 0$
and $d_i \sin(\theta_i) \to r_i \sqrt{1-a_i^2}$, we get 
$$ \lim_{d \to +\infty} \frac1d \ln\left(\frac{|D_i''|}{v_{d-2}}\right) = \ln(r_i \sqrt{1-a_i^2}).
$$
The lemma follows. 
Note that a direct calculus with spherical coordinates can also give the announced estimates.\finpreuve

%

\bigskip
Everything is now in place to prove Proposition \ref{p:onestep}.

\proofof{Proposition \ref{p:onestep}}
Choose $p<1$ and  $x \in W$ such that $x''=0$.

$\bullet$ 
We start with a single individual, encoded by its position $\zeta_0=\{x\} \subset C_0$, and we build, generation by generation, its descendance : we set, for $1 \le i \le k$,
$$
\zeta_i = \chi_1 \cap \bigcup_{y \in \zeta_{i-1}} D_{i}(y) \subset C_i,
$$
and for the $(k+1)$-th generation, we finally set
$$
\zeta_{k+1} = \chi_{\rho} \cap \bigcup_{y \in \zeta_k} D_{k+1}(y) \subset C_{k+1}. 
$$ 
By Lemma \ref{l:G}, if $\zeta_{k+1}\neq \varnothing$ then the event ${\mathcal G}^+(x)$ occurs. To bound from below the probability that $\zeta_{k+1} \neq \varnothing$, we now build a simpler process $\xi$, stochastically dominated by~$\zeta$.

$\bullet$ We set $\alpha_i=\lambda_1 |D_i|$ for $i \in \{1,\dots,k\}$ and $\alpha_{k+1}=\lambda_{\rho} |D_{k+1}|$ : thus, $\alpha_i$ is the mean number of children of a point of the $(i-1)$-th generation. Let $X_0=x$ be the position of the first individual. 

Consider a random vector $X=(X_0,X_1,\dots,X_{k+1})$ of points in $\R^d$ defined as follows : $X_0$ is defined by $X_0=x$,
$X_1$ is taken uniformly in $D_1(X_0)$, then $X_2$ is taken uniformly in $D_2(X_1)$, and so on.
We think of $X$ as a pontential single branch of descendance of $x$.
Let then $(X^j)_{j \ge 1}$ be independent copies of $X$.
Let now $N$ be an independent Poisson random variable with parameter $\alpha_1$ : this random variable will be the number of children of $X_0$.
We will use the $N$ first $X^j$, one for each child of $X_0$.

We now take into account the fact that some individuals may have no children. We shall deal with geometric dependencies later. Note that in our new process each individual of any generation $i, \,i \ge 1$, has at most one child. We made that choice in order to handle more easily geometric dependencies.
Let $Y=(Y_i^j)_{2 \le i \le k+1, j \ge 1}$ be an independent family of independent random variables, such that $Y^j_i$ follows the Bernoulli law with parameter $1-\exp(-\alpha_i)$, which is the probability that a Poisson random variable with parameter $\alpha_i$ is different from $0$. 
We set $J_1=\{1,\dots,N\}$ and, for every $i \in \{2, \dots, k+1\}$ :
$$
J_i = \{ 1 \le j \le N : Y_2^j = \dots = Y_i^j = 1\}.
$$
Thus the random set $J_i$ gives the exponents of the branches that are, among the $N$ initial branches, still alive at the $i$-th generation  in a process with no dependecies due to geometry.

Until now, we did not take into account the geometrical constraints between individuals. For every $i \in \{2,\dots,k+1\}$ and every $j \ge 1$, we set
$$
Z_i^j=1 \text{ if }X_i^j \not\in \bigcup_{j' \in J_{i-1} \setminus \{j\}} D_i(X_{i-1}^{j'}) \quad \quad \text{ and } Z_i^j=0 \text{ otherwise}.
$$
We will reject an individual $X_i^j$ and its descendance as soon as $Z_i^j=0$.
Recall that, when building generation $i$ from generation $i-1$, we explore the Poisson point processes in the area $\bigcup_{j \in J_{i-1}} D_i(X_{i-1}^{j})$. 
By construction of the $C_i$, these areas are distinct for different generations. 
Therefore, one can check that, for every $i \in \{2,\dots, k+1\}$, the set
$$
\xi_i=\{X_i^j :  j \in J_i \hbox{ and } Z_2^j=\dots=Z_i^j=1\}
$$
is stochastically dominated  by $\zeta_i$. Thus to prove Proposition \ref{p:onestep}, we now need to bound from below the probability that $\xi_{k+1}$ is not empty.


$\bullet$ Let $T$ be the smallest integer $j$ such that $ Y_2^j=\dots=Y_{k+1}^j=1$ : in other words, $T$ is the smallest exponent of a branch that lives till generation $k+1$. To ensure that $\xi_{k+1}\neq \varnothing$, it is sufficient that $T\le N$ and that $Z_2^T =\dots=Z_{k+1}^T=1$. So :
\begin{eqnarray*}
1-P({\mathcal G}^+(x)) 
 & \le & P(\xi_{k+1}=\emptyset) \\
 & \le & P(\#J_{k+1} = 0) + P \left(\{T \le N\} \cap  \bigcup_{2 \le i \le k+1} \{Z_i^T = 0\}\right) \\
 & \le & P(\#J_{k+1} = 0) + \sum_{2 \le i \le k+1} P(T \le N \hbox{ and } Z_i^T=0).
\end{eqnarray*}
For every $2 \le i \le k+1$, we have by construction :
\begin{eqnarray*}
P(T \le N \hbox{ and } Z_i^T=0) 
 & = & P\left(T \le N, \; \exists j \in J_{i-1} \setminus \{T\}  \hbox{ such that } X_i^T \in D_i(X_{i-1}^j) \right) \\
 & \le & \sum_{j \ge 1} P\left(T \le N \hbox{ and } j \in J_{i-1} \setminus \{T\}  \hbox{ and } X_i^T \in D_i(X_{i-1}^j) \right) \\
 & = & \sum_{j \ge 1} E\left( 1_{T \le N} 1_{j \in J_{i-1} \setminus \{T\}} P\left( X_i^T \in D_i(X_{i-1}^j) \left.\right| Y,N \right)\right) \\
 & = & \sum_{j \ge 1} E\left( 1_{T \le N} 1_{j \in J_{i-1} \setminus \{T\}} \right)P\left( X_i^1 \in D_i(X_{i-1}^2) \right) \\
 & \le & E(\# J_{i-1})P\left( X_i^1 \in D_i(X_{i-1}^2) \right).
\end{eqnarray*}
Besides, as  $(X_i^1)''$ is uniformly distributed on  $C''_i$  and is independent of $(X_{i-1}^2)''$, 
\begin{equation}
P\left(X_i^1 \in D_i(X_{i-1}^2)\right) 
  =   P\left((X_i^1)'' \in D''_i((X_{i-1}^2)'')\right) 
  =  \frac{|D''_i|}{|C''_i|} \label{e:probainterference} .
\end{equation}
This leads to 
\begin{equation}
1-P({\mathcal G}^+(x)) \le P(\#J_{k+1} = 0) + \sum_{i=2}^{k+1} E(\# J_{i-1})\frac{|D''_i|}{|C''_i|}. \label{e:majJ}
\end{equation}

$\bullet$ For $1 \le i \le k+1$, the cardinality of $J_i$ follows a Poisson law with parameter
$$
\eta_{i} = \alpha_1 \prod_{i'=2}^{i} (1-\exp(-\alpha_{i'})).
$$
Remember that  $\alpha_i=\lambda_1 |D_i|$ for $i \in \{1,\dots,k\}$ and $\alpha_{k+1}=\lambda_{\rho} |D_{k+1}|$. By Lemma \ref{l:d}, we have the following limits:
\begin{eqnarray*}
\lim_{d \to +\infty} \frac1d \ln \alpha_1 & = & \ln \frac{\kappa(1+\rho)}{2} >  0, \\
\lim_{d \to +\infty} \frac1d \ln \alpha_i & = & \ln (\kappa\sqrt{1-a_i^2}) < 0\hbox{ for } 2 \le i \le k, \\
\lim_{d \to +\infty} \frac1d \ln  \alpha_{k+1} & = & \ln(\kappa\sqrt{1-a_{k+1}^2}\frac{1+\rho}{2\rho}) < 0.
\end{eqnarray*}
To see the signs of the limits, note that $\kappa<1$, that $\frac{1+\rho}{2\rho}<1$ and that \eqref{cond} implies that
$$\kappa> \kappa^{k+1} > \frac{4\rho}{(1+\rho)^2 \sqrt{1-a_2^2}\dots\sqrt{1-a_{k+1}^2}}>\frac{2}{1+\rho}.$$
Consequently, we first see that 
\begin{eqnarray}
\lim_{d\to +\infty}  \frac1d \ln(\eta_{k+1}) 
 & = & \lim_{d\to +\infty}  \frac1d \ln(\alpha_1\dots\alpha_{k+1}) \nonumber \\ 
 & = & \ln \left(\kappa^{k+1}\frac{(1+\rho)^2}{4\rho}\sqrt{\prod_{2 \le j \le k+1}(1-a_j^2)} \right)> 0 \quad \text{ with \eqref{cond};} \nonumber \\
 \text{therefore,} &&\lim_{d\to +\infty} P(\#J_{k+1}=0) = 0. \label{e:premierterme}
\end{eqnarray}
Similarly, for $2 \le i \le k+1$, we have
\begin{eqnarray*}
\lim_{d\to +\infty} \frac1d \ln(\eta_{i-1}) 
 & = & \ln \left(\kappa^{i-1}\frac{1+\rho}{2}\sqrt{\prod_{2 \le i' \le i-1}(1-a_{i'}^2)}\right).
\end{eqnarray*}
Lemmas \ref{l:c} and \ref{l:d} ensure that :
$$
\lim_{d\to +\infty} \frac1d  \ln \left(\frac{|D''_i|}{|C''_i|}\right) = \ln\left(\frac{r_i\sqrt{1-a_i^2}}{d_i}\right).
$$
Thus, for $2 \le i \le k+1$, we have :
$$
\limsup_{d\to +\infty} \frac1d \ln \left( E(\# J_{i-1})\frac{|D''_i|}{|C''_i|}\right) 
\le  \ln \left( \frac{r_i (1+\rho) \kappa^{i-1}}{2d_i}\sqrt{\prod_{2 \le i' \le i}(1-a_{i'}^2)}\right).
$$
Now,
\begin{eqnarray} 
 && \text{for } 2 \le i \le k, \quad \limsup_{d\to +\infty} \frac1d \ln \left( E(\# J_{i-1})\frac{|D''_i|}{|C''_i|}\right)  \le  \ln \left( \frac{1+\rho}{d_i}\right) < 0, \label{e:deuxiemeterme}\\
&& \limsup_{d\to +\infty} \frac1d \ln \left( E(\# J_k)\frac{|D''_{k+1}|}{|C''_{k+1}|}\right) 
 \le  \ln \left( \frac{(1+\rho)^2 \kappa^k}{2d_{k+1}}\sqrt{\prod_{2 \le i' \le k+1}(1-a_{i'}^2)}\right)<0 \label{e:troisiemeterme}
\end{eqnarray}
with \eqref{cond}. To end the proof, we put estimates \eqref{e:premierterme}, \eqref{e:deuxiemeterme} and \eqref{e:troisiemeterme} in \eqref{e:majJ}.
\finpreuve

\subsubsection{Several steps in the $2$-dimensional oriented percolation model}
\label{s:severalsteps}

We prove here Proposition \ref{p:supercritical} by building the supercritical $2$-dimensional oriented percolation process embedded in the two-type Boolean Model.

\proofof{Proposition \ref{p:supercritical}}
We first define an oriented graph in the following manner: the set of sites is 
$$S=\{(a,n)\in \Z \times \N: \; |a| \le n, \; a+n \hbox{ is even }\};$$
from any point $(a,n) \in S$, we put an oriented edge from $(a,n)$ to $(a+1,n+1)$, and an oriented edge from $(a,n)$ to $(a-1, n+1)$. We denote by 
$\vec{p}_c(2) \in (0,1)$ the critical parameter for Bernoulli percolation on this oriented graph -- see Durrett \cite{Durrett-oriented} 
 for results on oriented percolation in dimension 2.  

For any $(a,n) \in S$, we define the following subsets of $\R^d$ 
\begin{eqnarray*}
W_{a,n} & = &  d^{-1/2} \left(]a-1,a+1[ \times ]n-1,n[ \times \R^{d-2}\right), \\
W^-_{a,n} & = &  d^{-1/2} \left(]a-1,a[ \times ]n,n+1[ \times \R^{d-2}\right), \\
W^+_{a,n} & = &  d^{-1/2} \left(]a,a+1[ \times ]n,n+1[ \times \R^{d-2}\right).
\end{eqnarray*}
Note that the $(W_{a,n})_{(a,n) \in S}$ are disjoint and that $W^+_{a,n}\cup W^-_{a+2,n} \subset W_{a+1,n+1}$.

We now fix $k\ge 1$ and $\kappa \in (\kappa^c_{\rho}(k),1)$, and 
for $x_0 \in W_{a,n}$, we introduce the events :
\begin{eqnarray*}
{\mathcal G}^+_{a,n}(x_0) & = & \left\{
\begin{array}{c}
\hbox{There exist distinct } x_1,\dots,x_k \in \chi_1 \cap  W^+_{a,n} \hbox{ and } x_{k+1} \in \chi_{\rho} \cap W^+_{a,n} \\
\hbox{such that } x_0, x_1, \dots, x_{k+1} \hbox{ is a path}
\end{array}
\right\}, \\
{\mathcal G}^-_{a,n}(x_0) & = & \left\{
\begin{array}{c}
\hbox{There exist distinct } x_1,\dots,x_k \in \chi_1 \cap  W^-_{a,n} \hbox{ and } x_{k+1} \in \chi_{\rho} \cap W^-_{a,n} \\
\hbox{such that } x_0, x_1, \dots, x_{k+1} \hbox{ is a path}
\end{array}
\right\}.
\end{eqnarray*}
Note that ${\mathcal G}^+_{0,0}(x)$ is exactly the event ${\mathcal G}^+(x)$ introduced in \eqref{e:bonevnt}, and that the other events are obtained from this one by symmetry and/or translation.

Next we choose $p \in (\vec{p}_c(2), 1)$. With Proposition \ref{p:onestep}, and by translation and symmetry invariance, we know that for every large enough dimension $d$, for every $(a,n) \in S$, for every $x \in W_{a,n}$:
\begin{equation}\label{e:gp}
P({\mathcal G}^{\pm}_{a,n}(x))\ge p.
\end{equation}
We fix then a dimension $d$ large enough to satisfy \eqref{e:gp}.
We can now construct the random states, open or closed, of the edges of our oriented graph. We denote by $\infty$ a virtual site.

\prg{Definition of the site on level $0$}
Almost surely, $\chi_\rho \cap W_{0,0} \neq \varnothing$. 
We take then some $x(0,0) \in \chi_\rho \cap W_{0,0}$. 

\prg{Definition of the edges between levels $n$ and $n+1$} Fix $n \ge 0$ and assume we built a site $x(a,n) \in W_{a,n}\cup \{\infty\}$ for every $a$ such that $(a,n) \in S$.  Consider $(a,n) \in S$ :
\begin{itemize}
\item If $x(a,n)=\infty$ :  
we decide that each of the two edges starting from $(a,n)$ is open with probability $p$ and closed with probability $1-p$, independently of everything else; we set $z^-(a,n)=z^+(a,n)=\infty$.
\item Otherwise : 
\begin{itemize}
\item Edge to the left-hand side : 
\begin{itemize}
\item if the event ${\mathcal G}^-_{a,n}(x(a,n))$ occurs : 
we take for $z^-(a,n)$ some point $x_{k+1}\in {W}^-_{a,n} \subset {W}_{a-1,n+1}$ given by the occurrence of the event, and we open the edge from $(a,n)$ to $(a-1,n+1)$  ; 
\item otherwise : we set $z^-(a,n)=\infty$ and we close the edge from $(a,n)$ to $(a-1,n+1)$. 
\end{itemize}
\item Edge to the right-hand side : 
\begin{itemize}
\item if the event ${\mathcal G}^+_{a,n}(x(a,n))$ occurs : 
we take for $z^+(a,n)$ some point $x_{k+1}\in {W}^+_{a,n} \subset {W}_{a+1,n+1}$ given by the occurrence of the event, and we open the edge from $(a,n)$ to $(a+1,n+1)$  ; 
\item otherwise : we set $z^+(a,n)=\infty$ and we close the edge from $(a,n)$ to $(a+1,n+1)$. 
\end{itemize}
\end{itemize}
\end{itemize}
For $(a,n)$ outside $S$, we set $z^{\pm}(a,n)=\infty$.

\prg{Definition of the sites at level $n+1$} Fix $n \ge 0$ and assume we determined the state of every edge between levels $n$ and $n+1$. Consider $(a,n+1) \in S$ :
\begin{itemize}
\item If $z^+(a-1,n) \neq \infty$ : set $x(a,n+1)=z^+(a-1,n)\in {W}_{a,n+1}$.
\item Otherwise :
\begin{itemize}
\item if $z^-(a+1,n) \neq \infty$ : set $x(a,n+1)=z^-(a+1,n) \in {W}_{a,n+1}$,
\item otherwise : set $x(a,n+1)=\infty$.
\end{itemize}
\end{itemize}

\bigskip
Assume that there exists an open path of length $n$ starting from the origin in this oriented percolation : we can check that the leftmost open path of length $n$ starting from the origin gives a path in the two-type Boolean model with $n$ alternating sequences of  $k$ balls with radius $1$  and one ball with radius $\rho$. Thus, percolation in this oriented percolation model implies percolation by $k$-alternation in the two-type Boolean model. Let us check that percolation occurs indeed with positive probability.

For every $n$, denote by ${\mathcal F}_n$ the $\sigma$-field generated by the restrictions of the Poisson point processes $\chi_1$ and $\chi_{\rho}$ to the set
$$
d^{-1/2} \left( \R \times (-\infty,n) \times \R^{d-2} \right).
$$
By definition of the events $\mathcal G$ -- remember that the  $(W_{a,n})_{(a,n) \in S}$ are disjoint -- and by \eqref{e:gp}, the states of the different edges between levels $n$ and $n+1$ are independent conditionally to ${\mathcal F}_n$. Moreover, conditionally to ${\mathcal F}_n$, each edge between levels $n$ and $n+1$ has a probability at least $p$ to be open. 
Therefore, the oriented percolation model we built stochastically dominates Bernoulli oriented percolation with parameter $p$. As $p>\vec{p}_c(\Z^2)$, with positive probability, there exists an infinite open path in the oriented percolation model we built; this ends the proof of Proposition \ref{p:supercritical}.
\finpreuve

\subsection{Proof of Theorem \ref{t:2}}

We first prove how Propositions \ref{p:subcritical} and \ref{p:supercritical} give 
\eqref{e:2} when $a=1$, $b>1$ and $\alpha=\beta=1$, and then we see how we can deduce the general case by scaling and coupling.

\subsubsection*{When $a=1$, $b>1$ and $\alpha=\beta=1$.} 
Set $\rho=b$. In this case, $\nu=\delta_1 + \delta_{\rho}$, so 
$\displaystyle \nu_d= \delta_1+\frac1{\rho^d}\delta_{\rho}$.

Note then that the two-type Boolean model $\Sigma$ introduced in Subsection \ref{s:ideas} and whose intensities depend on $\kappa \in (0,1)$ coincides with
the Boolean model directed by the measure 
$$
\frac{\kappa^d}{v_d 2^d} \nu_d
$$
as defined in the introduction.

If $\kappa<\kappa^c_\rho$ then, by Proposition \ref{p:subcritical}, there is no percolation for $d$ large enough.
Therefore, for any such $\kappa$ and for any large enough $d$ we have:
$$
\lambda^c_d(\nu_d) \ge \frac{\kappa^d}{v_d 2^d} \quad \text{and then} \quad \widetilde{\lambda}^c_d(\nu_d) = \lambda_d^c(\nu_d) v_d 2^d\int r^d \nu_d(dr)\ge 2\kappa^d .
$$
Letting $d$ goes to $+\infty$ and then $\kappa$ goes to $\kappa^c_\rho$, we then obtain
\begin{equation}
\label{e:liminf1}
\liminf_{d \to +\infty}\frac1d \ln \left(\lambda^c_d(\nu_d)\right)  \ge \ln \left( \kappa^c_\rho\right) .
\end{equation}

As $\kappa^c_\rho<1$ by Lemma \ref{l:kc2}, choose now $\kappa$ such that $\kappa^c_\rho<\kappa<1$. 
Then, there exists $k \ge 1$ such that $\kappa^c_{\rho}(k)<\kappa$.
Therefore, by Proposition \ref{p:supercritical}, there is percolation for $d$ large enough in $\Sigma$; by coupling, this remains true for 
larger $\kappa$.
Therefore, for any $\kappa>\kappa^c_\rho$ and for any large enough $d$ we have, as before:
$$
\lambda^c_d(\nu_d) \le \frac{\kappa^d}{v_d 2^d} \quad \text{and then} \quad\widetilde{\lambda}^c_d(\nu_d) \le 2\kappa^d.
$$
Letting $d$ goes to $+\infty$ and then $\kappa$ goes to $\kappa^c_\rho$, we then obtain
\begin{equation}
\label{e:limsup1}
\limsup_{d \to +\infty}\frac1d \ln \left(\lambda^c_d(\nu_d)\right)  \le \ln \left( \kappa^c_\rho\right) .
\end{equation}
Bringing \eqref{e:liminf1} and \eqref{e:limsup1} together, we get \eqref{e:2} when $a=1$, $b=\rho>1$ and $\alpha=\beta=1$.

\subsubsection*{When $b>a>0$ and $\alpha=\beta=1$.}
Set $\rho=b/a$. Here, $\nu=\delta_a+\delta_b$; set $\mu=\delta_1+\delta_\rho$. With the notation of the introduction, 
$$\nu_d=\frac1{a^d}(\delta_a+\frac1{\rho^d}\delta_b)=\frac1{a^d} H^a.(\delta_1+\frac1{\rho^d}\delta_\rho)=\frac1{a^d} H^a\mu_d.$$
By the scaling relations \eqref{e:scaling} and \eqref{e:scaling2}, we obtain
$$
\widetilde{\lambda}^c_d(\nu_d)=\widetilde{\lambda}^c_d(\mu_d).
$$
The result when $b>a>0$ and $\alpha=\beta=1$ follows then from the previous case.

\subsubsection*{When $b>a>0$ and $\alpha,\beta>0$.}
Here $\nu = \alpha\delta_a+\beta\delta_b$. Set $\mu=\delta_a+\delta_b$, $m=\min(\alpha,\beta)$ and $M=\max(\alpha,\beta)$. Then
$
m\mu_d \le \nu_d \le M\mu_d 
$
and so
\begin{eqnarray*}
m \int r^d d\mu_d(r) \le & \int r^d d\nu_d(r) & \le M \int r^d d\mu_d(r),\\
\frac1M\lambda_d^c(\mu_d) = \lambda_d^c(M\mu_d ) \le & \lambda_d^c(\nu_d) & \le \lambda_d^c(m\mu_d ) = \frac1m\lambda_d^c(\mu_d ).
\end{eqnarray*}
The two previous inequalities give:
$$
\frac{m}M \widetilde{\lambda}_d^c(\mu_d)
\le
\widetilde{\lambda}_d^c(\nu_d)
\le
\frac{M}m \widetilde{\lambda}_d^c(\mu_d),
$$
and the theorem follows from the previous case. \finpreuve

\section{Proof of Theorem \ref{t:general}}
\label{s:t:general}

Theorem \ref{t:general} follows from Theorem \ref{t:2} by coupling and scaling.
By assumption, $\mu$ is a measure on $(0,+\infty)$ whose support is not a singleton.
We can therefore choose $b'>a'>0$ in the support, set $\rho=b'/a'$  and then take a small enough $\epsilon>0$ such that
\begin{eqnarray*}
a'(1+\epsilon)<b'(1-\epsilon), && \mu([a'(1-\epsilon),a'(1+\epsilon)]>0, \\ \mu([b'(1-\epsilon),b'(1+\epsilon)]>0, &&
(1+\epsilon)(1-\epsilon)^{-1}\kappa^c_{\rho}<1.
\end{eqnarray*}
Set $a=a'(1-\epsilon), b=b'(1-\epsilon)$ and $\tau=(1+\epsilon)(1-\epsilon)^{-1}>1$.
We have
$$
a\tau < b, \; \mu([a,a\tau])>0, \; \mu([b,b\tau])>0 \text{ and } \tau \kappa^c_{\rho}<1.
$$
Set 
$
\nu=\mu([a,a\tau]) \delta_a + \mu([b,b\tau]) \delta_b
$
and
$
S=[a,a\tau] \cup [b,b\tau]
$.
For all $d \ge 1$ we have
\begin{eqnarray*}
\tau^{-d}\nu_d(\{a\}) 
 & = & \mu([a,a\tau]) (a\tau)^{-d} 
  \le  \int_{[a,a\tau]} r^{-d} \mu(dr) 
  =  \mu_d([a,a\tau])
\end{eqnarray*}
and, similarly,
$
\tau^{-d}\nu_d(\{b\}) \le  \mu_d([b,b\tau])
$.
By coupling, this implies that
$
\lambda_d^c(1_S\mu_d) \le \lambda_d^c(\tau^{-d}\nu_d)
$, and then that
$$
\lambda_d^c(\mu_d) \le \lambda_d^c(1_S\mu_d) \le \lambda_d^c(\tau^{-d}\nu_d) = \tau^d \lambda_d^c(\nu_d).
$$
But
$
\widetilde{\lambda}_d^c(\mu_d) = \lambda_d^c(\mu_d) 2^dv_d\mu((0,+\infty)) $
 and, similarly,
$\widetilde{\lambda}_d^c(\nu_d) = \lambda_d^c(\nu) 2^dv_d\nu((0,+\infty))$, which leads to
$$
\widetilde{\lambda}_d^c(\mu_d) \le \tau^d\frac{\mu((0,+\infty))}{\nu((0,+\infty))} \widetilde{\lambda}_d^c(\nu_d).
$$
But by Theorem \ref{t:2} we have
$$
\lim_{d\to+\infty} \frac1d\ln\left(\widetilde{\lambda}_d^c(\nu_d)\right) = \ln(\kappa^c_{\rho}), \text{ and then }
\limsup_{d\to+\infty} \frac1d\ln\left(\widetilde{\lambda}_d^c(\mu_d)\right) \le \ln(\tau\kappa^c_{\rho})<0,
$$
which ends ce proof.
\finpreuve

Note that as a byproduct of the proof, we obtain the following upper bound : $$
\limsup_{d\to+\infty} \frac1d\ln\left(\widetilde{\lambda}_d^c(\mu_d)\right) \le \inf_{0<a<b<+\infty,  \; a,b \in \text{Supp}(\mu)} \ln(\kappa^c_{b/a}).
$$


\bibliographystyle{plain}
\bibliography{biblio.bib}

\def\cprime{$'$} \def\cprime{$'$}
\begin{thebibliography}{10}

\bibitem{Balram-Dhar}
Ajit Balram and Deepak Dhar.
\newblock Scaling relation for determining the critical threshold for continuum
  percolation of overlapping discs of two sizes.
\newblock {\em Pramana}, 74:109--114, 2010.
\newblock 10.1007/s12043-010-0012-0.

\bibitem{Consiglio-2003}
R.~Consiglio, D.~R. Baker, G.~Paul, and H.~E. Stanley.
\newblock Continuum percolation thresholds for mixtures of spheres of different
  sizes.
\newblock {\em Physica A: Statistical Mechanics and its Applications}, 319:49
  -- 55, 2003.

\bibitem{Dhar-1997}
Deepak Dhar.
\newblock On the critical density for continuum percolation of spheres of
  variable radii.
\newblock {\em Physica A: Statistical and Theoretical Physics}, 242(3-4):341 --
  346, 1997.

\bibitem{Phani-Dhar}
Deepak Dhar and Mohan~K. Phani.
\newblock Continuum percolation with discs having a distribution of radii.
\newblock {\em J. Phys. A}, 17:L645--L649, 1984.

\bibitem{Durrett-oriented}
Richard Durrett.
\newblock Oriented percolation in two dimensions.
\newblock {\em Ann. Probab.}, 12(4):999--1040, 1984.

\bibitem{G-perco-boolean-model}
Jean-Baptiste Gou{\'e}r{\'e}.
\newblock Subcritical regimes in the {P}oisson {B}oolean model of continuum
  percolation.
\newblock {\em Ann. Probab.}, 36(4):1209--1220, 2008.

\bibitem{G-perco-generale}
Jean-Baptiste Gou{\'e}r{\'e}.
\newblock Subcritical regimes in some models of continuum percolation.
\newblock {\em Ann. Appl. Probab.}, 19(4):1292--1318, 2009.

\bibitem{G-multi}
Jean-Baptiste Gou{\'e}r{\'e}.
\newblock Percolation in a multiscale boolean model.
\newblock Preprint, 2010.

\bibitem{Kertesz-al}
János Kertész and Tamás Vicsek.
\newblock Monte carlo renormalization group study of the percolation problem of
  discs with a distribution of radii.
\newblock {\em Zeitschrift für Physik B Condensed Matter}, 45:345--350, 1982.
\newblock 10.1007/BF01321871.

\bibitem{Kesten-high-dimensions}
Harry Kesten.
\newblock Asymptotics in high dimensions for percolation.
\newblock In {\em Disorder in physical systems}, Oxford Sci. Publ., pages
  219--240. Oxford Univ. Press, New York, 1990.

\bibitem{Meester-Roy-livre}
Ronald Meester and Rahul Roy.
\newblock {\em Continuum percolation}, volume 119 of {\em Cambridge Tracts in
  Mathematics}.
\newblock Cambridge University Press, Cambridge, 1996.

\bibitem{Meester-Roy-Sarkar}
Ronald Meester, Rahul Roy, and Anish Sarkar.
\newblock Nonuniversality and continuity of the critical covered volume
  fraction in continuum percolation.
\newblock {\em J. Statist. Phys.}, 75(1-2):123--134, 1994.

\bibitem{Menshikov-al-multi}
M.~V. Menshikov, S.~Yu. Popov, and M.~Vachkovskaia.
\newblock On the connectivity properties of the complementary set in fractal
  percolation models.
\newblock {\em Probab. Theory Related Fields}, 119(2):176--186, 2001.

\bibitem{Penrose-high-dimensions}
Mathew~D. Penrose.
\newblock Continuum percolation and {E}uclidean minimal spanning trees in high
  dimensions.
\newblock {\em Ann. Appl. Probab.}, 6(2):528--544, 1996.

\bibitem{QZ-PRE-2007}
John~A. Quintanilla and Robert~M. Ziff.
\newblock Asymmetry in the percolation thresholds of fully penetrable disks
  with two different radii.
\newblock {\em Phys. Rev. E}, 76(5):051115, Nov 2007.

\end{thebibliography}

\end{document}